\documentclass{amsart}
\usepackage{amssymb, amsmath}
\usepackage{enumerate}

\usepackage[fleqn,tbtags]{mathtools}
\mathtoolsset{showonlyrefs,showmanualtags}

\usepackage{mathrsfs}
\usepackage{amscd}
\usepackage[active]{srcltx}
\usepackage{verbatim}
\usepackage{color}

%\definecolor{darkred}{rgb}{0.9,0.1,0.1}
%\usepackage[colorlinks,linkcolor={blue},citecolor={darkred},urlcolor={red},hypertex]{hyperref}

\usepackage{hyperref}
%\usepackage{newsymbol}
%\let\emptyset \undefined
%\let\ge       \undefined
%\let\le       \undefined
%\newsymbol\le          1336  \let\leq\le
%\newsymbol\ge          133E  \let\geq\ge
%\newsymbol\emptyset    203F
%\newsymbol\notle       230A
%\newsymbol\notge       230B

% theorems

%\theoremstyle{plain}
%\newtheorem{theorem}{Theorem}[section]
%\theoremstyle{remark}
%\newtheorem{remark}[theorem]{Remark}
%\newtheorem{example}[theorem]{Example}
%\newtheorem{problem}[theorem]{\textbf{\em Problem}}
%\theoremstyle{plain}
%\newtheorem{corollary}[theorem]{Corollary}
%\newtheorem{lemma}[theorem]{Lemma}
%\newtheorem{proposition}[theorem]{Proposition}
%\newtheorem{definition}[theorem]{Definition}
%\newtheorem{hypothesis}[theorem]{Hypothesis}
%\newtheorem{assumption}[theorem]{Assumption}
%\newtheorem{question}[theorem]{Question}
%\numberwithin{equation}{section}
%
%\numberwithin{equation}{}

 \newtheorem{thm}{Theorem}[section]
 \newtheorem{cor}[thm]{Corollary}
 \newtheorem{lem}[thm]{Lemma}
 \newtheorem{prop}[thm]{Proposition}
 \theoremstyle{definition}
 \newtheorem{defn}[thm]{Definition}
 \theoremstyle{remark}
 \newtheorem{rem}[thm]{Remark}
 \newtheorem*{ex}{Example}
 \numberwithin{equation}{section}

% number systems

\def\N{{\mathbb N}}

\def\R{{\mathbb R}}
\def\C{{\mathbb C}}

% probability notations

\newcommand{\dD}{\mathbb{D}}
\newcommand{\E}{{\mathbb E}}
\renewcommand{\P}{{\mathbb P}}
\newcommand{\A}{{\mathscr A}}
\newcommand{\F}{{\mathscr F}}
\newcommand{\G}{{\mathscr G}}
\renewcommand{\H}{{\mathscr H}}

\newcommand{\calE}{{\mathscr E}}

\newcommand{\cG}{\mathcal{G}}

\newcommand{\cS}{{S}}
\newcommand{\cE}{\mathcal{E}}

\newcommand{\cJ}{\mathscr{J}}
\newcommand{\cI}{{I}}
\newcommand{\cD}{{D}}

\newcommand{\g}{\gamma}
\renewcommand{\d}{\delta}
\newcommand{\eps}{\varepsilon}
\newcommand{\om}{\omega}
\renewcommand{\O}{\Omega}

% spaces

\newcommand{\Dom}{{\mathsf D}}

% commands

% miscellaneous

\renewcommand{\Re}{{\rm Re}\,}

\newcommand{\calL}{{\mathscr L}}
\newcommand{\n}{\Vert}
\newcommand{\one}{{{\bf 1}}}
\newcommand{\embed}{\hookrightarrow}
\newcommand{\s}{^*}
\newcommand{\lb}{\langle}
\newcommand{\rb}{\rangle}

\newcommand{\EE}{{\mathbb E}}
\newcommand{\CE}[2]{\EE(#1|#2)}
\newcommand{\Om}{\Omega}
\newcommand{\cprime}{$'$}
\newcommand{\wt}{\widetilde}

\allowdisplaybreaks
\begin{document}

\title
[Stochastic integration in Banach spaces]{Stochastic integration in Banach spaces -- \\ a survey}

\author{Jan van Neerven}
\address{Delft Institute of Applied Mathematics\\
Delft University of Technology \\ P.O. Box 5031\\ 2600 GA Delft\\The
Netherlands} \email{J.M.A.M.vanNeerven@tudelft.nl}

\author{Mark Veraar}
\address{Delft Institute of Applied Mathematics\\
Delft University of Technology \\ P.O. Box 5031\\ 2600 GA Delft\\The
Netherlands} \email{M.C.Veraar@tudelft.nl}

\author{Lutz Weis}
\address{Department of Mathematics\\
Karlsruhe Institute of Technology (KIT)\\
D-76128  Karls\-ruhe\\Germany}
\email{Lutz.Weis@kit.edu}

\begin{abstract}
This paper presents a brief survey of the theory of stochastic integration in Banach spaces.
Expositions of the stochastic integrals in martingale type $2$ spaces and UMD spaces are presented,
as well as some applications of the latter to vector-valued Malliavin calculus and the stochastic
maximal regularity problem. A new proof of the stochastic maximal regularity theorem
is included.
\end{abstract}

\subjclass{Primary: 60H05, Secondary: 46B09, 46E40, 60H15}

\keywords{Stochastic integration, martingale type, UMD Banach spaces, $\gamma$-radonifying operators,
Malliavin calculus, $R$-boundedness, stochastic maximal regularity}

\date\today

\thanks{The first named author is supported by VICI subsidy 639.033.604
of the Netherlands Organisation for Scientific Research (NWO). The second author
is supported by VENI subsidy
639.031.930
of the Netherlands Organisation for Scientific Research (NWO).
The third named author is supported by a grant from the
Deutsche Forschungsgemeinschaft (We 2847/1-2).}
\thanks{This paper is based on the lectures delivered by the first-named author during the Semester on Stochastic
Analysis and Applications at the Bernoulli Centre, EPF Lausanne. JvN wishes to thank the organisers Robert Dalang,
Marco Dozzi, Franco Flandoli, and Francesco Russo for the excellent conditions and the enjoyable atmosphere.}

\maketitle

\section{Introduction}
Stochastic calculus was developed in the 1950s in the fundamental work of It\^o.
In its simplest form, the construction of
the It\^o stochastic integral with respect to a Brownian motion $(B_t)_{t\ge 0}$
relies on an $L^2$-isometry,
which asserts that if $\phi: \R_+\times \Omega \to \R$ is an adapted simple process, then
$$ \E \Big| \int_0^\infty \phi_t \,dB_t \Big|^2 = \E \int_0^\infty |\phi_t|^2\,dt.$$
This isometry is used to extend the stochastic integral to arbitrary progressively measurable
processes satisfying $\E \int_0^\infty |\phi_t|^2\,dt < \infty.$
The stochastic integral process $t\mapsto \int_0^t \phi_s\,dB_s$
defines a continuous $L^2$-martingale, and by means of stopping time techniques
the integral can be extended to all progressively measurable
processes satisfying $$\int_0^\infty |\phi_t|^2\,dt < \infty \ \ \hbox{almost surely}.$$

It was immediately realised that the above programme generalises {\em mutatis mutandis} to stochastic integrals
of progressively measurable processes with values in a Hilbert space $H$.
Some of the early works in this direction include \cite{BD, CM, Cur, Dal, Met1}.
More generally, if $H'$ is another Hilbert space one may allow operator-valued integrands
with values in the space of Hilbert-Schmidt operators $\calL_2(H,H')$ to define an $H'$-valued stochastic
integral with respect to an $H$-cylindrical Brownian motion. This integral was
popularised by Da Prato and Zabczyk, who used it to study stochastic partial differential equations (SPDE)
by functional analytic and operator theoretic methods \cite{DPZ, DPZ2}; see also \cite{Met2}.

From the point of view of SPDE the limitation to the Hilbert space framework is rather restricting, and various authors
have attempted to extend the theory of stochastic integration to more general classes of Banach spaces.
It was realised soon that a stochastic integral for square integrable functions with values in a
Banach space $X$ can be defined
if $X$ has type $2$ \cite{HJP}, whereas a bounded measurable function $f:[0,1]\to \ell^p$ may fail to be stochastically
integrable for $1\le p<2$ \cite{Yo}; see \cite{RS} for more detailed results and examples along these lines.
A systematic theory
of stochastic integration in $2$-smooth Banach spaces was developed by Neidhardt in his 1978 PhD thesis and,
independently, by Belopol{\cprime}skaya and Dalecky, \cite{BelDal} and Dettweiler \cite{Dett91}
independently developed a parallel theory for martingale type $2$ spaces.
Interestingly, Pisier \cite{Pi75} had already shown in 1975
that a Banach space has an equivalent $2$-smooth norm
if and only if it has martingale type $2$.
The stochastic integrals of Neidhardt and Dettweiler were further developed and
applied to SPDEs by Brze\'zniak
\cite{Brz1, Brz2, Brz3, BrzMil}. We shall briefly
summarize the martingale type $2$ approach in Section
\ref{sec:Mtype2}.

Along a different line, the fundamental work of Burkholder \cite{Bu1, Bu2} showed
that many of the deeper inequalities in the theory of martingales
extend to a class of Banach spaces in which martingale differences are unconditional, nowadays called the
class of UMD Banach spaces. These spaces were characterised by Burkholder \cite{Bu1} and Bourgain \cite{Bou83} as
precisely those Banach spaces $X$ for which the Hilbert transform on $L^p(\R)$ extends boundedly to $L^p(\R;X)$.
As a consequence, UMD spaces provide a natural framework for vector-valued harmonic analysis, and indeed
large parts of the theory of singular integrals have by now been extended to UMD spaces
\cite{Bour86, GirWe03, HHN02,Hyt06, McCon84, StrWeis08, Zim89}.

The probabilistic definition of the UMD property in terms of martingale differences suggests the possibility
to develop stochastic calculus in UMD spaces. The first result in this direction is due to Garling \cite{Gar},
 who proved a two-sided $L^p$-estimate for the stochastic
integral of an adapted simple process $\phi$ with values in a UMD space in
terms of the stochastic integral of $\phi$ with respect to an independent
Brownian motion.  McConnell \cite{McCon89} proved decoupling inequalities for tangent martingale difference sequences
and used them to obtain a sufficient condition for stochastic integrability of an UMD-valued process with
respect to a Brownian motion in terms of the almost sure stochastic integrability of its trajectories with respect
to an independent Brownian motion.
The ideas of Garling and McConnell have been streamlined and extended in a systematic way
by the present authors \cite{NVWco, NVW1, NW1} and applied to SPDEs \cite{BNVW,NVW3, NVW12eq, NVW10}.
A key idea in obtaining two-sided estimates of Burkholder-Gundy type is to measure the integrand
in a norm that is custom-made for the Gaussian setting, rather than in the traditional Lebesgue-Bochner norms. In
an operator-theoretic language, these Gaussian norms are given in terms of certain $\gamma$-radonifying operators
(see Section \ref{sec:radonifying} for the relevant definitions).
The main aim of this paper is to provide a coherent presentation of this theory and some of its applications,
in particular to the vector-valued Malliavin calculus and the stochastic maximal $L^p$-regularity problem.
In the final section of this paper we discuss some recent $L^p$-bounds for vector-valued Poisson stochastic integrals.

Let us mention that various different approaches to stochastic integration in Banach spaces exist in the
literature, e.g., \cite{BroDin, GirRus, GirRus2, MetPel}.

We finish the introduction by fixing some notation. All vector spaces are real.
Throughout the paper, $H$ and $\H$ are fixed Hilbert spaces.
We will always identify Hilbert spaces with their duals via the Riesz representation theorem.
All random variables are supposed to be defined on a fixed probability space $(\O,\P)$.

\section{Isonormal processes}

It is a well-known result in the theory of Gaussian measures that an infinite-dimensional
Hilbert space $\H$ does not support a standard Gaussian measure (cf. \cite{Bog}). By this we mean that there exists
no Radon probability measure $\gamma$ on $\H$ with the property that for all $h\in \H$ of norm one
the image measure of $\gamma$ under the mapping $h:\H\to \R$ is standard Gaussian.
The following definition serves as a substitute.

\begin{defn}\label{def:isonormal}
An {\em $\H$-isonormal process} is a bounded linear mapping
$W: \H \to L^2(\O)$
with the following properties:
\begin{enumerate}
\item[\rm(i)] for all $h\in \H$ the random
variable $Wh$ is Gaussian;
\item[\rm(ii)] for all $h_1,h_2\in \H$ we have $\E (Wh_1 \cdot Wh_2) = [h_1,h_2].$
\end{enumerate}
\end{defn}
It is an easy exercise to check that for any Hilbert space $\H$, an $\H$-isonormal process does indeed exist.
The random variables $Wh$, $h\in H$, are jointly Gaussian, as every
linear combination $\sum_{j=1}^k c_j\, Wh_j = W(\sum_{j=1}^k c_j h_j)$ is Gaussian.
In particular this implies that if $h_1,\dots,h_k$ are orthogonal, then  $Wh_1, \dots, Wh_k$ are
independent. For more details we refer to \cite[Chapter 1]{Nu}.

\begin{ex} If $(B_t)_{t\ge 0}$ is a standard Brownian motion in $\R^d$, then the It\^o stochastic integral
 $$ W(f) := \int_0^\infty f(t)\,dB_t, \qquad f\in L^2(\R_+;\R^d),$$
defines an $L^2(\R_+;\R^d)$-isonormal process $W$. In the converse direction, if $W$ is an  $L^2(\R_+;\R^d)$-isonormal process,
we let $(e_j)_{j=1}^d$ denote the standard unit basis of $\R^d$ and
note that
$$ B_t^{(j)} := W(\one_{[0,t]}\otimes e_j), \qquad t\ge 0,$$
defines a standard Brownian motion for each $1\le j\le d$; these Brownian motions are independent
and define the coordinates of a standard Brownian motion in $\R^d$.
\end{ex}

\begin{defn}
An {\em $H$-cylindrical Brownian motion} is an $L^2(\R_+;H)$-isonormal process.
\end{defn}

\begin{defn} A {\em space-time white noise} on a domain $D\subseteq \R^d$ is
an $L^2(\R_+\times D)$-isonormal process.
\end{defn}
Under the natural identification $L^2(\R_+\times D) = L^2(\R_+; L^2(D))$, a space-time white noise
may be identified with an $L^2(D)$-cylindrical Brownian motion.

\section{Radonifying operators}\label{sec:radonifying}

Let $H\otimes X$ denote the linear space of all finite rank operators from $H$ to $X$.
Every element in $H \otimes X$ can be represented in the form
$\sum_{n=1}^N h_n\otimes x_n$, where $h_n\otimes x_n$ is the rank one operator mapping the vector
$h\in H$ into $[h,h_n]x_n \in X$. By a Gram-Schmidt orthogonalisation argument
we may assume that the vectors  $h_1,\dots,h_N$ are orthonormal in $H$.

Let $(\gamma_n)_{n\ge 1}$ be a Gaussian sequence, i.e., a sequence of independent real-valued standard Gaussian
random variables.

\begin{defn}\label{def:g-rad}
The Banach space $\g( H ,X)$ is defined as the completion of $ H \otimes X$ with
respect to the norm
$$ \Big(\Big\n \sum_{n=1}^N h_n\otimes x_n\Big\n_{\g(H,X)}^2\Big)^{1/2}
:=  \Big(\E \Big\n \sum_{n=1}^N \g_n x_n\Big\n^2\Big)^{1/2},
$$
where it is assumed that $h_1,\dots,h_N$ are orthonormal in $ H $.
\end{defn}
The quantity on the right-hand side is independent of the above representation as long as the
vectors in $H$ are taken to be orthonormal; this is an easy consequence
of the fact that the distribution of a Gaussian vector in $\R^N$ is invariant under orthogonal transformations.
As a result, the norm $\n\cdot\n_{\gamma(H,X)}$ is well defined.

The celebrated Kahane-Khintchine inequality asserts that for all $0<p,q<\infty$ there exists
a constant $\kappa_{q,p}\ge 0$, depending only on $p$ and $q$, such that
\begin{equation}\label{eq:KKradonifying}
\Big(\E \Big\n \sum_{n=1}^N \g_n x_n\Big\n^q \Big)^{1/q}
\le \kappa_{q,p}\Big( \E \Big\n \sum_{n=1}^N \g_n x_n\Big\n^p\Big)^{1/p}.
\end{equation}
Proofs can be found in \cite{DJT, KwWo89, LeTa}.
It was shown in \cite{LatOle99}
that the optimal constant is given by $\kappa_{q,p} = \max\{\frac{\|\g_1\|_q}{\|\g_1\|_p},1\}$.
In particular, $\kappa_{q,p}\leq C_p \sqrt{q}$ for $q\geq 1$.

It follows from \eqref{eq:KKradonifying} that for each $p\in [1,\infty)$ we obtain an equivalent
norm on $\gamma(H,X)$ if we replace
the exponent $2$ by $p$
in Definition \ref{def:g-rad}. The resulting space will be indicated by $\gamma^p(H,X)$.

The identity mapping on $H\otimes X$ extends to an injective and contractive embedding of
$\gamma(H,X)$ into $\calL(H,X)$, the space of all bounded linear operators from $H$ into $X$
(for the simple proof see \cite[Section 3]{NeeCMA}). We may thus identify $\gamma(H,X)$ with a
linear subspace in $\calL(H,X)$. Assuming this identification,
we call a bounded operator $T\in \calL(H,X)$ {\em $\gamma$-radonifying} if it belongs to
$\gamma(H,X)$.

\begin{ex}
 If $X$ is a Hilbert space, then we have an isometric isomorphism $$\gamma(H,X)  =\calL_2(H,X),$$
where $\calL_2(H,X)$ is the space of all Hilbert-Schmidt operators from $H$ to $X$.
\end{ex}

\begin{ex}
For $1\le p<\infty$ we have an isometric isomorphism of Banach spaces
\begin{equation}\label{eq:gammaLpX}
\gamma^p(H, L^p(\mu;X)) \simeq L^p(\mu;\gamma^p(H;X))
\end{equation} which is
obtained by associating with $f\in L^p(\mu;\gamma(H;X))$ the mapping
$h'\mapsto f(\cdot)h'$ from $H$ to $L^p(\mu;X)$.
The proof is an easy application of Fubini's theorem.
In particular, upon identifying $\gamma^p(H,\R)$ isomorphically with $H$, we obtain an
isomorphism of Banach spaces
\begin{equation}\label{eq:gammaLp}
\gamma^p(H, L^p(\mu)) \simeq L^p(\mu;H).
\end{equation}
\end{ex}

\section{Stochastic integration in martingale type $2$ spaces}\label{sec:Mtype2}

In this section we shall give a brief account of the construction of the It\^o stochastic integral
in martingale type $2$ spaces.
In order to bring out the analogy with the UMD approach more clearly we will first consider the
simpler case of deterministic integrands, for which it suffices to assume that
$X$ has type $2$.

\subsection{Deterministic integrands}

Let $(r_n)_{n\ge 1}$ be a Rademacher sequence, i.e., a sequence of independent random variables
taking the values $\pm 1$ with probability $\frac12$.

\begin{defn} Let $p\in [1,2]$.
 A Banach space $X$ has {\em type $p$} if there exists a constant $\tau\ge 0$ such that for all
finite sequences $(x_n)_{n=1}^N$ in $X$ we have
\begin{equation}\label{eq:type}
\E\Big\| \sum_{n=1}^N r_n x_n \Big\|^p \le \tau^p\sum_{n=1}^N  \| x_n\|^p.
\end{equation}
\end{defn}
The least admissible constant
is denoted by $\tau_{p,X}$. In the next subsection we will give some
examples of spaces with type $p$; in fact these examples have the stronger property of martingale type $p$.

In the proof of the next proposition we shall use the following randomisation identity.
If $(\xi_n)_{n\ge 1}$ is a sequence of independent
symmetric random variables in $L^p(\Omega;X)$, and if $(\wt r_n)_{n\ge 1}$ is an independent Rademacher sequence
defined on another probability space $(\widetilde \Omega,\widetilde \P)$,
then for all $N\ge 1$ we have
\begin{equation}\label{eq:typerandom}
\E\Big\| \sum_{n=1}^N \xi_n\Big\|^p =
\E\widetilde \E\Big\| \sum_{n=1}^N \widetilde r_n \xi_n\Big\|^p.
\end{equation}
This follows readily from Fubini's theorem, noting that for each $\wt\omega\in \wt\Omega$
the sequences $(\xi_n)_{n\ge 1}$ and $(\wt r_n(\wt\om)\xi_n)_{n\ge 1}$ are identically distributed.

Suppose now that $W$ is an $H$-cylindrical Brownian motion
(i.e, an $L^2(\R_+;H)$-isonormal process).
A function $\phi: \R_+\to H\otimes X$ is called an {\em elementary function} if it is a linear combination of
functions of the form $\one_{(s,t]}\otimes (h\otimes x)$ with $0\le s<t<\infty$, $h\in H$ and $x\in X$.
The stochastic integral with respect to $W$ of such a function is defined by putting
$$ \int_0^\infty \one_{(s,t]}\otimes (h\otimes x)\,dW := W(\one_{(s,t]}\otimes h)\otimes x$$
and extending this definition by linearity.

\begin{prop}\label{prop:type2-stochint}
Suppose that $X$ has type $2$ and let $\phi: \R_+\to H\otimes X$ be elementary.
Then
\[ \E \Big\| \int_0^\infty \phi\,dW \Big\|^2 \le \tau_{2,X}^2  \int_0^\infty \n \phi(t)\n_{\gamma(H,X)}^2\,dt.\]
\end{prop}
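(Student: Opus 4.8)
The plan is to reduce $\phi$ to a standard form, exhibit the integral as a sum of independent symmetric $X$-valued random variables, and then apply the type $2$ inequality after a randomisation step.

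First I would write $\phi$ in the form $\phi = \sum_{n=1}^N \one_{(t_{n-1},t_n]}\otimes B_n$ with $0 = t_0 < t_1 < \cdots < t_N$ and each $B_n \in H\otimes X$; this is always possible by taking the common refinement of the partition underlying the linear combination defining $\phi$, so that the time intervals become disjoint and $\phi$ is constant equal to $B_n$ on $(t_{n-1},t_n]$. Setting $\xi_n := \int_0^\infty \one_{(t_{n-1},t_n]}\otimes B_n\,dW$, linearity of the elementary integral gives $\int_0^\infty \phi\,dW = \sum_{n=1}^N \xi_n$. The crucial point is that the $\xi_n$ are independent: writing $B_n = \sum_m h_{nm}\otimes x_{nm}$, each $\xi_n$ is a finite linear combination (with coefficients $x_{nm}\in X$) of the Gaussian variables $W(\one_{(t_{n-1},t_n]}\otimes h_{nm})$, and since the functions $\one_{(t_{n-1},t_n]}\otimes h$ have disjoint supports in time for distinct $n$, they are orthogonal in $L^2(\R_+;H)$, so property (ii) of Definition \ref{def:isonormal} forces independence across $n$. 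Each $\xi_n$ is $X$-valued Gaussian, hence symmetric.

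With independence and symmetry in hand, I would apply the randomisation identity \eqref{eq:typerandom} with $p=2$ to obtain
$$\E\Big\n \sum_{n=1}^N \xi_n\Big\n^2 = \E\wt\E\Big\n \sum_{n=1}^N \wt r_n \xi_n\Big\n^2.$$
For each fixed $\om$ the inner expectation is a Rademacher average of the fixed vectors $\xi_n(\om)$, so the definition of type $2$ yields $\wt\E\n\sum_n \wt r_n\xi_n(\om)\n^2 \le \tau_{2,X}^2 \sum_n \n\xi_n(\om)\n^2$; integrating over $\om$ gives $\E\n\sum_n \xi_n\n^2 \le \tau_{2,X}^2 \sum_n \E\n\xi_n\n^2$.

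It remains to identify $\E\n\xi_n\n^2$ with the integral of the $\g(H,X)$-norm. After a Gram--Schmidt step I may assume the $h_{nm}$ are orthonormal in $H$; then, by property (ii) again, the variables $W(\one_{(t_{n-1},t_n]}\otimes h_{nm})$ are independent centred Gaussians each of variance $t_n - t_{n-1}$, so $\xi_n$ has the same distribution as $\sqrt{t_n-t_{n-1}}\,\sum_m \g_{nm}x_{nm}$ for a standard Gaussian sequence $(\g_{nm})$. By Definition \ref{def:g-rad} this gives $\E\n\xi_n\n^2 = (t_n - t_{n-1})\,\n B_n\n_{\g(H,X)}^2 = \int_{t_{n-1}}^{t_n}\n\phi(t)\n_{\g(H,X)}^2\,dt$, and summing over $n$ turns the bound above into the assertion. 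I expect the only genuinely delicate point to be the passage from independent symmetric summands to a Rademacher sum where type $2$ is applicable --- precisely the role of \eqref{eq:typerandom} --- together with the correct normalisation of the time increment, so that the covariance of the Gaussians matches the orthonormality built into the definition of the $\g(H,X)$-norm.
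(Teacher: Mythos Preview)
Your proof is correct and follows essentially the same approach as the paper: both write $\phi$ as a sum over disjoint time intervals, insert Rademacher signs via \eqref{eq:typerandom}, apply the type $2$ inequality to split the sum over $n$, and then identify each summand with $(t_n-t_{n-1})\|\phi(t_n)\|_{\gamma(H,X)}^2$ using the definition of the $\gamma$-norm. The only cosmetic difference is that the paper fixes a common orthonormal system $(h_j)_{j=1}^k$ in $H$ for all time steps at the outset, whereas you package each time-step integral as $\xi_n$ and perform Gram--Schmidt separately for each $n$.
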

\begin{proof}
We may write
$$\phi = \sum_{n=1}^N \one_{(t_{n-1},t_n]}\otimes \sum_{j=1}^k h_j\otimes x_{jn}$$
for some fixed orthonormal system $(h_j)_{j=1}^k$ in $X$ and suitable
 $0\le t_0 < \dots < t_N<\infty$ and $x_{jn}\in X$.

Since the functions $(\one_{(t_{n-1},t_n)}\otimes h_j)/(t_n-t_{n-1})^{1/2}$ are orthonormal in $L^2(\R_+;H)$,
their images under $W$, denoted by $\gamma_{jn}$, form a Gaussian sequence. Hence,
by \eqref{eq:typerandom} and the type $2$ property,
\begin{align*}
 \E \Big\| \int_0^\infty \phi\,dW \Big\|^2
& =  \E \Big\|  \sum_{n=1}^N  \sum_{j=1}^k \gamma_{jn} \otimes [(t_n - t_{n-1})^{1/2} x_{jn}]\Big\n^2
\\ & =  \E \widetilde \E \Big\|   \sum_{n=1}^N \widetilde r_n \sum_{j=1}^k \gamma_{jn} \otimes [(t_n - t_{n-1})^{1/2} x_{jn}]\Big\n^2
\\ & \le \tau_{2,X}^2  \sum_{n=1}^N \E\Big\n \sum_{j=1}^k  \gamma_{jn} \otimes [(t_n - t_{n-1})^{1/2}x_{jn}]\Big\n^2
\\ & = \tau_{2,X}^2  \sum_{n=1}^N (t_n - t_{n-1})\E\Big\n \sum_{j=1}^k  \gamma_{jn} \otimes x_{jn}\Big\n^2
\\ & = \tau_{2,X}^2  \sum_{n=1}^N (t_n - t_{n-1})\Big\n \sum_{j=1}^k  h_j \otimes x_{jn}\Big\n_{\gamma(H,X)}^2
\\ & = \tau_{2,X}^2  \int_0^\infty \n \phi(t)\n_{\gamma(H,X)}^2\,dt.
\end{align*}
\end{proof}
The following proposition shows that there
is no hope of extending Proposition \ref{prop:type2-stochint} beyond the type $2$ case, even in the case $H = \R$
(in which case $W$ can be identified with a standard Brownian motion $B$ and $\gamma(H,X)$ with $X$).

As a preparation for the proof we recall the Kahane contraction principle,
which asserts that if $(\xi_n)_{n=1}^N$
is a sequence of independent and symmetric random variables, then for all scalar sequences $(a_n)_{n=1}^{N}$ we have
\begin{align*}
\E \Big\| \sum_{n=1}^{N} a_n \xi_n \Big\|^p & \leq
\max_{1\le n\le N}|a_N|^p\  \E \Big\| \sum_{n=1}^{N} \xi_n \Big\|^p
\end{align*}
for all $1\le p<\infty$.
Using this result together with the observation that $(\xi_n)_{n=1}^N$ and
$(r_n|\xi_n|)_{n=1}^N$ are identically distributed we see that
if $\inf_{1\le n\le N} \E |\xi_n| \ge \delta$, then
\begin{equation}\label{eq:contr}
\begin{aligned}
\wt \E \Big\n\sum_{n=1}^N \wt r_n x_n \Big\n^p & =
\wt \E \Big\n \E \sum_{n=1}^N \frac{\wt r_n |\xi_n|}{\E |\xi_n|} x_n \Big\n^p
\\ & \le \wt \E \E  \Big\n \sum_{n=1}^N \frac{\wt r_n |\xi_n|}{\E |\xi_n|} x_n \Big\n^p
\le \frac1{\delta^p} \E \Big\n\sum_{n=1}^N \xi_n x_n \Big\n^p.
\end{aligned}\end{equation}
In the case of standard Gaussian variables, note that
\begin{align}\label{eq:moment}
\E|\gamma| = \sqrt{2/\pi}
\end{align}

\begin{prop}
 If there exists a constant $C\ge 0$ such that for all elementary functions $\phi: \R_+\to X$
we have
\[ \E \Big\| \int_0^\infty \phi\,dB \Big\|^2 \le C^2  \int_0^\infty \n \phi(t)\n^2\,dt,\]
then $X$ has type $2$.
\end{prop}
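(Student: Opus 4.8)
The plan is to feed the hypothesis a single well-chosen elementary function and then convert the resulting Gaussian estimate into the Rademacher estimate \eqref{eq:type} by means of the contraction bound \eqref{eq:contr} already prepared above. Since $H=\R$ here, an elementary function is just a step function, so given $x_1,\dots,x_N\in X$ I would test the hypothesis on $\phi=\sum_{n=1}^N \one_{(n-1,n]}\otimes x_n$, built from the unit intervals $(n-1,n]$.

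First I would compute the two sides. The stochastic integral of $\phi$ against the standard Brownian motion $B$ is the Gaussian sum
\[
\int_0^\infty \phi\,dB = \sum_{n=1}^N (B_n - B_{n-1})\, x_n = \sum_{n=1}^N \gamma_n x_n,
\]
where the increments $\gamma_n := B_n - B_{n-1}$ are independent standard Gaussians by the defining isometry of $B$. Since the intervals have unit length, $\int_0^\infty \n\phi(t)\n^2\,dt = \sum_{n=1}^N \n x_n\n^2$, so the hypothesis delivers the Gaussian type $2$ inequality
\[
\E\Big\n \sum_{n=1}^N \gamma_n x_n \Big\n^2 \le C^2 \sum_{n=1}^N \n x_n\n^2.
\]

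Second I would pass from Gaussian to Rademacher sums via \eqref{eq:contr}, applied with $\xi_n := \gamma_n$. The $\gamma_n$ are independent and symmetric, and by \eqref{eq:moment} we have $\E|\gamma_n| = \sqrt{2/\pi}$ for every $n$, so we may take $\delta = \sqrt{2/\pi}$ and $1/\delta^2 = \pi/2$; the contraction estimate then gives
\[
\wt\E\Big\n \sum_{n=1}^N \wt r_n x_n \Big\n^2 \le \frac{\pi}{2}\, \E\Big\n \sum_{n=1}^N \gamma_n x_n \Big\n^2.
\]
Because $(\wt r_n)$ and $(r_n)$ are identically distributed, the left-hand side equals $\E\n\sum_n r_n x_n\n^2$; combining with the previous display yields $\E\n\sum_{n=1}^N r_n x_n\n^2 \le \tfrac{\pi}{2} C^2 \sum_{n=1}^N \n x_n\n^2$, which is precisely \eqref{eq:type} for $p=2$, with admissible constant $\tau_{2,X}\le C\sqrt{\pi/2}$.

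I do not expect a genuine obstacle: the only substantive move is the Gaussian-to-Rademacher comparison, and \eqref{eq:contr} together with \eqref{eq:moment} was set up exactly for that purpose. The only points meriting a line of verification are that the Brownian increments over the chosen disjoint unit intervals are indeed independent standard Gaussians, which is immediate from the isometry, and that in the case $H=\R$ the elementary functions used to define the integral reduce to ordinary step functions, so that testing on arbitrary coefficient vectors $x_1,\dots,x_N$ is legitimate.
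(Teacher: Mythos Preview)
Your proof is correct and follows essentially the same route as the paper: the same test function $\phi=\sum_{n=1}^N \one_{(n-1,n]}\otimes x_n$, the same identification of the Brownian increments with a Gaussian sequence to obtain Gaussian type~$2$, and the same appeal to \eqref{eq:contr} and \eqref{eq:moment} to pass to the Rademacher inequality with constant $\tau_{2,X}\le C\sqrt{\pi/2}$.
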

\begin{proof}
 Fix  $x_1,\dots,x_N\in X$ and consider the function
$\phi = \sum_{n=1}^N \one_{(n-1,n]}\otimes x_n.$ If a constant $C\ge 0$ with the above property
exists, then, using that the increments $B_n - B_{n-1}$ are standard Gaussian and independent,
\begin{align*}
 \E \Big\n\sum_{n=1}^N \gamma_n x_n \Big\n^2 & = \E \Big\| \sum_{n=1}^N (B_n - B_{n-1}) x_n \Big\|^2
\\ & = \E \Big\| \int_0^\infty \phi\,dB \Big\|^2
 \le C^2  \int_0^\infty \n \phi(t)\n^2\,dt = C^2 \sum_{n=1}^N  \| x_n\|^2.
\end{align*}
This proves that $X$ has {\em Gaussian type $2$}, with Gaussian type $2$ constant $\tau_{2,X}^\gamma \le C$.
By \eqref{eq:contr} and \eqref{eq:moment}, this implies
that $X$ has type $2$ with $\tau_{2,X} \le C\sqrt{\pi /2}$.
\end{proof}
Further examples may be found in \cite{RS, Yo}.

\subsection{Random integrands}\label{subsec:random} If one tries to extend the above proof to
the case of a random integrand,
one sees that the type $2$ property does not suffice. Indeed, the coupling between the integrand and
$W$ destroys the Gaussianity. However, the martingale structure is retained, and this can be
exploited to make a variation of the argument work under a slightly stronger assumption
on the Banach space $X$, viz. that it has
martingale type $2$.

\begin{defn} Let $p\in [1,2]$.
 A Banach space $X$ has {\em martingale type $p$} if there exists a constant $\mu\ge 0$ such that for all
all finite $X$-valued martingale difference sequences $(d_n)_{n=1}^N$ we have
\begin{equation}\label{eq:martingaletype}
\E\Big\| \sum_{n=1}^N d_n\Big\|^p \le \mu^p\sum_{n=1}^N\E \| d_n\|^p.
\end{equation}
\end{defn}
The least admissible constant in this definition is denoted by $\mu_{p,X}$.

\begin{ex} Here are some examples.
\begin{itemize}
\item Every Banach space has martingale type $1$.
 \item
Every Hilbert space has martingale type $2$.
 \item
Every $L^p(\mu)$ space, $1\leq p<\infty$, has martingale type $p\wedge 2$.
\end{itemize}
\end{ex}
Since every Gaussian sequence is a martingale difference sequence, we see immediately that
every Banach space with martingale type $p$ has type $p$, with constant $\tau_{p,X}\le \mu_{p,X}$.

Suppose now that an $H$-cylindrical Brownian motion $W$ is given. We shall denote by
 $(\F_t)_{t\ge 0}$ the filtration induced by $W$, i.e., $\F_t$ is the $\sigma$-algebra generated
by all random variables $W(f)$ with $f\in L^2(0,t;H)$. The following lemma is proved by a standard
monotone class argument.

\begin{lem}
 If the functions $f_1,\dots, f_k\in L^2(\R_+;H)$ have support in $[t,\infty)$, then $(W(f_1), \dots, W(f_k))$
is independent of $\F_t$.
\end{lem}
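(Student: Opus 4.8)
The plan is to reduce the assertion to two ingredients: the elementary fact that within a jointly Gaussian family, two blocks are independent as soon as they are uncorrelated, and a standard $\pi$-system argument that promotes independence from finite cylinders to independence from all of $\F_t$. The only substantive computation is that the cross-covariances vanish, which is immediate from the support condition.

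First I would record that $\F_t$ is generated by the $\pi$-system of events of the form $\{(W(g_1),\dots,W(g_m))\in B\}$ with $m\ge 1$, $g_1,\dots,g_m\in L^2(0,t;H)$ (viewed as elements of $L^2(\R_+;H)$ supported in $[0,t]$), and $B\subseteq\R^m$ Borel. It therefore suffices to prove that the vector $(W(f_1),\dots,W(f_k))$ is independent of every such finite collection $(W(g_1),\dots,W(g_m))$.

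Fix such $g_1,\dots,g_m$. The combined family $(W(f_1),\dots,W(f_k),W(g_1),\dots,W(g_m))$ is jointly Gaussian, since by Definition~\ref{def:isonormal}(i) and the remark following it every linear combination $W(\sum_i c_i f_i + \sum_j d_j g_j)$ is Gaussian. By Definition~\ref{def:isonormal}(ii) the cross-covariances are
$$\E\big(W(f_i)\cdot W(g_j)\big) = [f_i,g_j] = \int_0^\infty [f_i(s),g_j(s)]\,ds = 0,$$
because $f_i$ is supported in $[t,\infty)$ while $g_j$ is supported in $[0,t]$, so that the integrand vanishes for almost every $s$ (the supports meet only in the null set $\{t\}$). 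Since a jointly Gaussian vector whose two blocks are mutually uncorrelated has those blocks independent, the vector $(W(f_1),\dots,W(f_k))$ is independent of $(W(g_1),\dots,W(g_m))$.

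Finally I would invoke the Dynkin $\pi$-$\lambda$ theorem: independence of the fixed vector $(W(f_1),\dots,W(f_k))$ from each member of the generating $\pi$-system extends to independence from the entire generated $\sigma$-algebra $\F_t$. The genuinely probabilistic input is the Gaussian equivalence of uncorrelatedness and independence; the passage from finite cylinders to $\F_t$ is the routine monotone-class step alluded to in the statement, and I expect it to be the only point requiring any care.
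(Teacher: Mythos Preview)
Your proof is correct and is precisely the ``standard monotone class argument'' the paper alludes to without spelling out: uncorrelatedness of the two Gaussian blocks gives independence on finite cylinders, and the $\pi$-$\lambda$ theorem upgrades this to independence from all of $\F_t$. There is nothing to add or correct.
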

More generally, we could consider any filtration with the property stated in the lemma.

Let $\phi:\R_+\times\Omega\to H\otimes X$ be an {\em adapted elementary process}. By this we mean that
$\phi$ is a linear combination of processes of the form
$$ \one_{(s,t]\times F} \otimes (h\otimes x)$$
with $0\le s<t$, $F\in \F_s$, $h\in H$, and $x\in X$.
The stochastic integral of $\phi$ with respect to $W$ is then defined by putting
$$\int_0^\infty \one_{(s,t]\times F} (h\otimes x) \, dW : = \one_{F} W(\one_{(s,t]}\otimes h)\otimes x$$
and extending this definition by linearity.

\begin{thm}\label{thm:Mtype2-stochint} Suppose that the Banach space $X$ has martingale type $2$ and let
$\phi:\R_+\times\Omega\to H\otimes X$ be an adapted elementary process.
Then
$$ \E \Big\n \int_0^\infty \phi\,dW\Big\n^2 \le \mu_{2,X}^2 \E \int_0^\infty \n \phi_t\n_{\gamma(H,X)}^2\,dt.$$
\end{thm}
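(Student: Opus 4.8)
The plan is to mimic the type $2$ argument from Proposition \ref{prop:type2-stochint}, but to replace the Gaussian randomisation step, which fails for random integrands, by an application of the martingale type $2$ inequality \eqref{eq:martingaletype}. First I would write the adapted elementary process in the normalised form
$$
\phi = \sum_{n=1}^N \one_{(t_{n-1},t_n]}\otimes \sum_{j=1}^k h_j\otimes x_{jn},
$$
where now, crucially, the coefficients $x_{jn}$ are $\F_{t_{n-1}}$-measurable $X$-valued random variables rather than fixed vectors, and $(h_j)_{j=1}^k$ is a fixed orthonormal system in $H$. By definition the stochastic integral equals $\sum_{n=1}^N \sum_{j=1}^k W(\one_{(t_{n-1},t_n]}\otimes h_j)\, x_{jn}$, and I would set $\xi_n := \sum_{j=1}^k W(\one_{(t_{n-1},t_n]}\otimes h_j)\, x_{jn}$, so that the integral is $\sum_{n=1}^N \xi_n$.

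The key observation is that $(\xi_n)_{n=1}^N$ is a martingale difference sequence with respect to the filtration $(\F_{t_n})_{n=0}^N$. This is exactly where the earlier Lemma enters: the increments $W(\one_{(t_{n-1},t_n]}\otimes h_j)$ are centred Gaussian and, having support in $[t_{n-1},\infty)$, are independent of $\F_{t_{n-1}}$, while the coefficients $x_{jn}$ are $\F_{t_{n-1}}$-measurable. Hence $\E(\xi_n\mid \F_{t_{n-1}}) = 0$, so $(\xi_n)$ is a genuine martingale difference sequence. Applying \eqref{eq:martingaletype} gives
$$
\E\Big\n \int_0^\infty \phi\,dW\Big\n^2 = \E\Big\n \sum_{n=1}^N \xi_n\Big\n^2 \le \mu_{2,X}^2 \sum_{n=1}^N \E\n\xi_n\n^2.
$$

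It remains to identify each term $\E\n\xi_n\n^2$ with a $\gamma$-radonifying norm. Here I would condition on $\F_{t_{n-1}}$: given this $\sigma$-algebra, the $x_{jn}$ are fixed, while the variables $W(\one_{(t_{n-1},t_n]}\otimes h_j)$ are jointly Gaussian, independent of $\F_{t_{n-1}}$, with mean zero and covariance $(t_n-t_{n-1})\delta_{ij}$ (orthogonality of the $h_j$ plus property (ii) of Definition \ref{def:isonormal}). Thus, conditionally, $\xi_n$ has the same distribution as $(t_n-t_{n-1})^{1/2}\sum_{j=1}^k \gamma_j x_{jn}$ for a standard Gaussian sequence $(\gamma_j)$, whence
$$
\E\n\xi_n\n^2 = (t_n-t_{n-1})\,\E\Big\n \sum_{j=1}^k \gamma_j\, x_{jn}\Big\n^2 = (t_n-t_{n-1})\,\E\Big\n \sum_{j=1}^k h_j\otimes x_{jn}\Big\n_{\gamma(H,X)}^2,
$$
using Definition \ref{def:g-rad} in the last step. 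Summing over $n$ reassembles the right-hand side into $\mu_{2,X}^2\,\E\int_0^\infty \n\phi_t\n_{\gamma(H,X)}^2\,dt$. I expect the main obstacle to be the careful bookkeeping in this conditioning step, namely verifying that the conditional law of $\xi_n$ is the asserted Gaussian mixture, so that taking expectations recovers exactly the $\gamma(H,X)$-norm of the random coefficient matrix; the martingale difference property itself is immediate once the Lemma is invoked.
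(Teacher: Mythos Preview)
Your proposal is correct and follows essentially the same route as the paper: write the integral as a sum of martingale differences $\xi_n$ with respect to $(\F_{t_n})$, apply the martingale type $2$ inequality, and then identify each $\E\|\xi_n\|^2$ with $(t_n-t_{n-1})\,\E\|\phi_{t_n}\|_{\gamma(H,X)}^2$. The only cosmetic difference is that the paper keeps the explicit representation $x_{jn}=\sum_m \one_{F_{mn}} x_{jmn}$ with disjoint $F_{mn}\in\F_{t_{n-1}}$ and computes $\E\|\xi_n\|^2$ by using this disjointness and the independence of $\gamma_{jn}$ from $\F_{t_{n-1}}$, whereas you absorb the indicators into $\F_{t_{n-1}}$-measurable coefficients and argue by conditioning; both arguments are equivalent.
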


\begin{proof}
By assumption we may represent $\phi$ as
\begin{equation}\label{eq:phi}
 \phi = \sum_{n=1}^N \one_{(t_{n-1}, t_n]} \sum_{m=1}^M \one_{F_{mn}} \otimes
\sum_{j=1}^k h_j \otimes x_{jmn}.
\end{equation}
Here, $(h_j)_{j=1}^k$ is an orthonormal system in $H$,
for each $1\le n\le N$ the sets $F_{mn}$, $1\le m\le M$, are disjoint
and belong to $\F_{t_{n-1}}$, and the vectors $x_{jmn}$ are taken from $X$.
Then
$$ \int_0^\infty \phi \, dW
 = \sum_{n=1}^N\sum_{m=1}^M\sum_{j=1}^k    \one_{F_{mn}} W(\one_{(t_{n-1}, t_n]}\otimes h_j)\otimes x_{jmn}.$$
As before, the images under $W$ of the functions
$(\one_{(t_{n-1}, t_n]}\otimes h_j)/{(t_n - t_{n-1})^{1/2}}$, which we denote by
$\gamma_{jn}$, form a Gaussian sequence.
The random variables
$$d_{n} := (t_n - t_{n-1})^{1/2} \sum_{m=1}^M \sum_{j=1}^k\one_{F_{mn}} \gamma_{jn} \otimes x_{jmn}
$$ form a martingale difference sequence $(d_n)_{n=1}^N$ with
respect to $(\F_{t_n})_{n=0}^N$. To see this,  note that $d_n$ is $\F_{t_n}$-measurable and
$F_{nm}\in \F_{t_{n-1}}$, and therefore
$$\E(\one_{F_{mn}} \gamma_{jn}|\F_{t_{n-1}}) = \one_{F_{mn}} \E \gamma_{jn} = 0$$
since $\gamma_{jn}$ is independent of $\F_{t_{n-1}}$.
Using the martingale type $2$ property of
$X$, the lemma, and the disjointness of the sets $F_{1n},\dots,F_{Mn}$,
we may now estimate
\begin{align*}
\E \Big\n\int_0^\infty \phi \, dW\Big\n^2
&  = \E \Big\n  \sum_{n=1}^N (t_n - t_{n-1})^{1/2}\sum_{m=1}^M \sum_{j=1}^k   \one_{F_{mn}} \gamma_{jn} \otimes x_{jmn}\Big\n^2
\\ &  = \E \Big\n \sum_{n=1}^N d_{n} \Big\n^2
 \le \mu_{2,X}^2 \sum_{n=1}^N\E \n d_{n} \n^2
\\ & = \mu_{2,X}^2  \sum_{n=1}^N (t_n - t_{n-1})\sum_{m=1}^M \E\one_{F_{mn}} \E \Big\n\sum_{j=1}^k \gamma_{jn} x_{jmn}\Big\n^2
\\ & = \mu_{2,X}^2 \sum_{n=1}^N (t_n - t_{n-1})\sum_{m=1}^M \E \one_{F_{mn}} \Big\n\sum_{j=1}^k h_j\otimes x_{jmn}\Big\n_{\gamma(H,X)}^2
\\ & = \mu_{2,X}^2 \E \int_0^\infty \n \phi_t\n_{\gamma(H,X)}^2\,dt.
\end{align*}
\end{proof}
By Doob's inequality, this improves to the maximal inequality
$$ \E \sup_{t\geq 0} \Big\n \int_0^t \phi\,dW\Big\n^2 \le 4\mu_{2,X}^2 \E \int_0^\infty \n \phi_t\n_{\gamma(H,X)}^2\,dt.$$
From here, it is a routine density argument
to extend the stochastic integral to arbitrary progressively measurable processes
$\phi:\R_+\times\Om\to\gamma(H,X)$ that satisfy $\E  \int_0^\infty \n \phi_t\n_{\gamma(H,X)}^2\,dt<\infty$;
the process $ t\mapsto \int_0^t \phi\,dW$
is then a continuous martingale.
Then, the usual stopping time techniques apply to extend the integral to progressively measurable processes
satisfying $\int_0^\infty \n \phi_t\n_{\gamma(H,X)}^2\,dt < \infty$ almost surely.

The following version of Burkholder's inequality holds:

\begin{thm}\label{thm:Burk-Mtype2} Let $X$ have martingale type $2$. Then for any strongly measurable
adapted process
$\phi:\R_+\times\Omega\to \gamma(H,X)$ and $0< p<\infty$,
\begin{align}\label{eq:BDG}
\E \sup_{t\geq 0}\Big\|  \int_0^t \phi\,dW \Big\|^p \le C_{p,X}^p \|\phi\|_{L^p(\O;L^2(\R_+;\gamma(H,X)))}^p.
\end{align}
\end{thm}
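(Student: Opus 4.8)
The plan is to bootstrap \eqref{eq:BDG} from the $L^2$-maximal inequality following Theorem~\ref{thm:Mtype2-stochint} to all $0<p<\infty$ by a stopping-time argument. Write $M_t:=\int_0^t\phi\,dW$; by the density and localisation procedure described above this is a continuous $X$-valued (local) martingale, and we may assume the right-hand side of \eqref{eq:BDG} is finite, since otherwise there is nothing to prove. By a standard truncation of $\phi$ together with Fatou's lemma it suffices to prove the bound, with a constant independent of the truncation, when $\phi$ is bounded with compact support, so that $M$ is a genuine continuous $L^2$-martingale. The building block is the following stopped estimate: for any bounded stopping time $\tau$, applying the maximal inequality following Theorem~\ref{thm:Mtype2-stochint} to $\phi\one_{[0,\tau]}$, whose integral is $M_{\cdot\wedge\tau}$, gives
\[
\E\sup_{s\le\tau}\|M_s\|^2\le 4\mu_{2,X}^2\,\E\int_0^\tau\|\phi_t\|_{\gamma(H,X)}^2\,dt.
\]
Thus the nondecreasing process $U_t:=\sup_{s\le t}\|M_s\|^2$ is Lenglart-dominated by the continuous (hence predictable) nondecreasing process $A_t:=4\mu_{2,X}^2\int_0^t\|\phi_s\|_{\gamma(H,X)}^2\,ds$, in the sense that $\E U_\tau\le\E A_\tau$ for every bounded $\tau$.

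For $0<p<2$ this is enough. Lenglart's domination inequality with exponent $q=p/2\in(0,1)$ gives $\E(U_\infty)^{p/2}\le C_p\,\E(A_\infty)^{p/2}$; since $(U_\infty)^{p/2}=(\sup_t\|M_t\|)^p$ and $\E(A_\infty)^{p/2}=(4\mu_{2,X}^2)^{p/2}\,\|\phi\|_{L^p(\O;L^2(\R_+;\gamma(H,X)))}^p$, this is precisely \eqref{eq:BDG}. The borderline case $p=2$ is the base estimate itself.

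The main obstacle is the range $p>2$, where domination alone controls only moments of order $<1$. Here I would use the Burkholder--Gundy good-$\lambda$ method, which in fact covers all $0<p<\infty$ at once. With $M^\ast:=\sup_t\|M_t\|$, $S:=(\int_0^\infty\|\phi_t\|_{\gamma(H,X)}^2\,dt)^{1/2}$, parameters $\beta>1$, $\delta>0$, and the stopping times $T_\lambda:=\inf\{t:\|M_t\|>\lambda\}$ and $R:=\inf\{t:\int_0^t\|\phi_s\|_{\gamma(H,X)}^2\,ds>(\delta\lambda)^2\}$, the stopped $L^2$-estimate on $(T_\lambda,T_{\beta\lambda}\wedge R]$ combined with Chebyshev's inequality yields the good-$\lambda$ inequality
\[
\P(M^\ast>\beta\lambda,\ S\le\delta\lambda)\le\frac{4\mu_{2,X}^2\,\delta^2}{(\beta-1)^2}\,\P(M^\ast>\lambda).
\]
Multiplying by $p\lambda^{p-1}$, integrating, and using $\P(M^\ast>\beta\lambda)\le\P(M^\ast>\beta\lambda,\,S\le\delta\lambda)+\P(S>\delta\lambda)$ leads to
\[
\beta^{-p}\,\E(M^\ast)^p\le\frac{4\mu_{2,X}^2\,\delta^2}{(\beta-1)^2}\,\E(M^\ast)^p+\delta^{-p}\,\E S^p;
\]
fixing $\beta>1$ and choosing $\delta$ small enough that the first coefficient on the right is strictly below $\beta^{-p}$ permits absorbing the $\E(M^\ast)^p$ term---finite thanks to the preliminary truncation---to obtain $\E(M^\ast)^p\le C_{p,X}^p\,\E S^p$. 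The delicate points lie entirely in this last step: ensuring $\E(M^\ast)^p<\infty$ a priori so the absorption is legitimate, and verifying that the continuous-path stopping times and the $X$-valued increments behave as in the scalar Burkholder--Gundy argument.
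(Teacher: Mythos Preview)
The paper does not prove this theorem; it only records the statement and cites three proofs from the literature: Dettweiler's via a martingale Rosenthal inequality (for $p\ge 2$), Ondrej\'at's via a good-$\lambda$ inequality, and Seidler's sharp-constant proof via Pinelis's maximal inequality. Your argument is essentially Ondrej\'at's: you bootstrap from the $p=2$ maximal inequality (which the paper does prove, immediately after Theorem~\ref{thm:Mtype2-stochint}) to all $0<p<\infty$ by the good-$\lambda$ method, and the sketch is correct. The Lenglart-domination argument you give for $0<p<2$ is also correct but, as you yourself note, redundant once the good-$\lambda$ machinery is in place. The only delicate points are precisely the ones you flag---the a priori finiteness of $\E(M^\ast)^p$ via truncation so that the absorption step is legitimate, and the fact that the $X$-valued stopped difference $t\mapsto M_{t\wedge T_{\beta\lambda}\wedge R}-M_{t\wedge T_\lambda\wedge R}$ is the stochastic integral of $\phi\one_{(T_\lambda,\,T_{\beta\lambda}\wedge R]}$ so that the base $L^2$-estimate applies with bound $(\delta\lambda)^2$---and both are routine. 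As the paper remarks, this route gives a constant $C_{p,X}$ growing faster than $\sqrt{p}$ as $p\to\infty$; the optimal order would require Seidler's approach instead.
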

For $p\ge 2$ this result is due to Dettweiler \cite{Dett91}
who gave a proof based on a martingale version of Rosenthal's inequality.
A particularly simple proof, based on a good-$\lambda$ inequality, was obtained by Ondrej\'at \cite{Ondrejat04}.
Both proofs produce non-optimal constants $C_{p,X}$ as $p\to \infty$. A proof with the optimal constant
$$C_{p,X} \le C_X \sqrt{p}, \quad p\ge 2,$$ was obtained by Seidler \cite{Seidler} using
square function techniques in combination with a maximal inequality for
discrete-time martingales due to Pinelis \cite{Pin}.

The drawback of the martingale type $2$ theory is not so much the fact that the class of spaces to which it applies
is rather limited (e.g., it applies to $L^p$-spaces only for $p\in [2,\infty)$) but rather the fact that the
inequalities of Theorems \ref{thm:Mtype2-stochint} and \ref{thm:Burk-Mtype2} are not sharp.
In applications to parabolic SPDE, this
lack of sharpness prevents one from proving the sharp endpoint inequalities needed for maximal regularity
of mild solutions.
As we will outline next, the theory of stochastic integration in UMD spaces does produce the sharp estimates
that are needed for this purpose.

\section{Stochastic integration in UMD spaces}

\subsection{Deterministic integrands}

Let $X$ be an arbitrary Banach space
and $W$ be an $H$-cylindrical Brownian motion.
For an elementary function $\phi: \R_+\to H\otimes X$
we define the stochastic integral $\int_0^\infty\phi\,dW$
as before. The following proposition provides a {\em two-sided} estimate
for the $L^p$-norms of this integral.
As a preliminary observation we note that $\phi$, being an elementary
function, defines an element in the algebraic tensor product
$ L^2(\R_+) \otimes (H\otimes X)$. In view of the linear isomorphism of vector spaces
\begin{equation}\label{eq:identif}
 L^2(\R_+) \otimes (H\otimes X)\simeq (L^2(\R_+) \otimes H)\otimes X
\end{equation}
we may view $\phi$ as an element of  $(L^2(\R_+) \otimes H)\otimes X$.
Identifying $L^2(\R_+)\otimes H$ with a dense subspace of $L^2(\R_+;H)$, we may
view $\phi$ as an element in $\gamma(L^2(\R_+;H), X)$.

\begin{prop}[It\^o isometry]\label{prop:Ito}
Let $X$ be a Banach space and let $p\in [1, \infty)$.
For all elementary functions $\phi:\R_+\to H\otimes X$ we have
\[\E \Big\| \int_0^\infty \phi\,dW\Big\|^p = \n \phi\n_{\gamma^p(L^2(\R_+;H),X)}^p.\]
\end{prop}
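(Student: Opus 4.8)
The plan is to reduce both sides of the claimed identity to \emph{one and the same} Gaussian expectation; in contrast to the martingale type $2$ estimate of Theorem \ref{thm:Mtype2-stochint}, here no inequality intervenes, precisely because the $\gamma^p$-norm is \emph{defined} so as to capture the left-hand side. First I would put $\phi$ into the normalised form used in the proof of Proposition \ref{prop:type2-stochint}: after refining the time partition and applying a Gram--Schmidt argument I may write
$$\phi = \sum_{n=1}^N \one_{(t_{n-1},t_n]}\otimes \sum_{j=1}^k h_j\otimes x_{jn}$$
with $0\le t_0<\dots<t_N<\infty$, the system $(h_j)_{j=1}^k$ orthonormal in $H$, and $x_{jn}\in X$. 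Setting $g_{jn}:=(t_n-t_{n-1})^{-1/2}\,\one_{(t_{n-1},t_n]}\otimes h_j$, the disjointness of the intervals together with the orthonormality of the $h_j$ gives $[g_{jn},g_{j'n'}]=\delta_{nn'}\delta_{jj'}$, so that $(g_{jn})$ is an orthonormal system in $L^2(\R_+;H)$.

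Next I would evaluate the left-hand side. By the definition of the integral of an elementary function and bilinearity,
$$\int_0^\infty \phi\,dW = \sum_{n=1}^N\sum_{j=1}^k W(\one_{(t_{n-1},t_n]}\otimes h_j)\otimes x_{jn} = \sum_{n=1}^N\sum_{j=1}^k \gamma_{jn}\otimes\big[(t_n-t_{n-1})^{1/2}x_{jn}\big],$$
where $\gamma_{jn}:=W(g_{jn})$. The decisive observation, already exploited in the earlier proofs, is that since $(g_{jn})$ is orthonormal and $W$ is an $L^2(\R_+;H)$-isonormal process, property (ii) of Definition \ref{def:isonormal} yields $\E(\gamma_{jn}\gamma_{j'n'})=[g_{jn},g_{j'n'}]=\delta_{nn'}\delta_{jj'}$; being jointly Gaussian and uncorrelated, the $\gamma_{jn}$ are independent standard Gaussians, hence a genuine Gaussian sequence. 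Consequently
$$\E\Big\n\int_0^\infty\phi\,dW\Big\n^p = \E\Big\n\sum_{n=1}^N\sum_{j=1}^k\gamma_{jn}\otimes\big[(t_n-t_{n-1})^{1/2}x_{jn}\big]\Big\n^p.$$

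Finally I would evaluate the right-hand side. Under the identification \eqref{eq:identif}, $\phi$ corresponds to the finite-rank operator $\sum_{n,j}(\one_{(t_{n-1},t_n]}\otimes h_j)\otimes x_{jn}=\sum_{n,j} g_{jn}\otimes\big[(t_n-t_{n-1})^{1/2}x_{jn}\big]$ in $L^2(\R_+;H)\otimes X$. Since $(g_{jn})$ is orthonormal, Definition \ref{def:g-rad} — with the exponent $p$ in place of $2$, which gives the $\gamma^p$-norm — applies verbatim and produces exactly $\E\n\sum_{n,j}\gamma_{jn}\otimes[(t_n-t_{n-1})^{1/2}x_{jn}]\n^p$ (any standard Gaussian sequence gives the same value). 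This matches the previous display, so the two expectations coincide and the identity follows. The only real point to watch — the closest thing to an obstacle in an otherwise computational argument — is the bookkeeping around \eqref{eq:identif} and the verification that the normalised $g_{jn}$ form an orthonormal system, which is what guarantees that the integral and the $\gamma^p$-norm are read off against the \emph{same} orthonormal system; once that is secured the statement is a tautology rather than an estimate.
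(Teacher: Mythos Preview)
Your proof is correct and follows essentially the same route as the paper's: you represent $\phi$ in the normalised form of Proposition~\ref{prop:type2-stochint}, observe that the functions $g_{jn}$ (the paper writes $f_{jn}$) are orthonormal in $L^2(\R_+;H)$, and then identify both sides of the claimed equality with the same Gaussian $p$-th moment. The paper's argument is exactly this, just written more tersely.
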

\begin{proof}
Representing $\phi$ as in the proof of Proposition \ref{prop:type2-stochint} and using the notations
introduced there, we have
\begin{align*}
\E \Big\n \int_0^\infty \phi\,dW \Big\n^p
& = \E \Big\n \sum_{n=1}^N  \sum_{j=1}^k \gamma_{jn} \otimes (t_n - t_{n-1})^{1/2}  x_{jn}\Big\n^p
\\ & = \Big\n \sum_{n=1}^N  \sum_{j=1}^k f_{jn} \otimes (t_n - t_{n-1})^{1/2}x_{jn}\Big\n_{\gamma^p(L^2(\R_+;H),X)}^p
\\ & =  \n \phi\n_{\gamma^p(L^2(\R_+;H);X)}^p,
\end{align*}
where we used that the functions $f_{jn}:= (\one_{(t_{n-1},t_n]}\otimes h_j)/(t_n - t_{n-1})^{1/2}$
are orthonormal in $L^2(\R_+;H)$ and satisfy $\sum_{n=1}^N \sum_{j=1}^k f_{jn}\otimes (t_n - t_{n-1})^{1/2}
x_{jn} = \phi$.
\end{proof}
By a density argument, the mapping $\phi\mapsto \int_0^\infty \phi\,dW$ extends to an isometry from
$\gamma^p(L^2(\R_+;H),X)$ into $L^p(\Omega;X).$

Combining the estimates of Propositions \ref{prop:type2-stochint} and \ref{prop:Ito} under the assumption
that $X$ have type $2$,
we obtain the inequality
$$ \n \phi\n_{L^2(\R_+;\gamma(H,X))} \le \tau_{2,X} \n \phi\n_{\gamma(L^2(\R_+;H);X)}
$$ for elementary functions $\phi$. This
implies that if $X$ has type $2$, then
the natural identification made in \eqref{eq:identif} extends to a bounded inclusion
$$ L^2(\R_+;\gamma(H,X)) \embed \gamma(L^2(\R_+;H);X)$$
of norm at most $\tau_{2,X}$.
For further results along this line we refer the reader to \cite{KNVW, vNWe, RS}.

\subsection{UMD spaces}

Next we show that it is possible to extend Proposition \ref{prop:Ito} to random integrands
if $X$ is a UMD space.
We start with a brief introduction of this class of Banach spaces.

\begin{defn}\label{def:UMD}
A Banach space $X$ is called a {\em UMD space} if for some $p\in (1, \infty)$ (equivalently,
for all $p\in (1,\infty)$) there is a constant $\beta\ge 0$ such that for all
$X$-valued $L^p$-martingale difference sequences $(d_n)_{n\geq 1}$ and all signs $(\epsilon_n)_{n\geq 1}$ one has
\begin{equation}\label{eq:UMD}
 \E \Big\| \sum_{n=1}^N \epsilon_n d_n\Big\|^p \le \beta^p  \E \Big\| \sum_{n=1}^N  d_n\Big\|^p, \quad \forall N\geq 1.
\end{equation}
\end{defn}
The least admissible constant in this definition is called the {\em UMD$_p$-constant} of $X$ and is
denoted by $\beta_{p,X}$.
It is by no means obvious that once the UMD property holds for one $p\in (1,\infty)$, then it holds for all
$p\in (1,\infty)$; this seems to have been first observed by Pisier, whose proof was outlined in \cite{Mau}.
A more systematic proof based on martingale decompositions can be found in \cite{Bu1} and the survey paper \cite{Burk01}.

\begin{ex} Let us provide some examples of UMD spaces. Fix $p,p'\in (1, \infty)$ such that $\frac1p+\frac{1}{p'}=1$.
\begin{itemize}
 \item
 Every Hilbert space $H$ is a UMD space (with $\tau_{p,H} = \max\big\{p,p'\big\}$).
 \item
 The spaces $L^p(\mu)$, $1<p<\infty$, are UMD spaces (with $\beta_{p,L^p(\mu)} = \max\big\{p,p'\big\}$).
 More generally, if $X$ is a UMD space, then $L^p(\mu;X)$, $1<p<\infty$, is a UMD space (with $\beta_{p,L^p(\mu;X)} = \beta_{p,X}$)
 \item $X$ is a UMD space if and only $X\s$ is a UMD space (with $\beta_{p,X} = \beta_{p',X\s}$).
 \item Every Banach space which is isomorphic to a closed subspace or a quotient of a UMD space is a UMD space.
\end{itemize}
\end{ex}
By applying \eqref{eq:UMD} to the martingale difference sequence $(\epsilon_n d_n)_{n\ge 1}$ one obtains
the reverse estimate
\begin{equation}\label{eq:UMDreverse}
 \E \Big\| \sum_{n=1}^N  d_n\Big\|^p \le
\beta_{p,X}^p \E \Big\| \sum_{n=1}^N \epsilon_n d_n\Big\|^p , \quad \forall N\geq 1.
\end{equation}
If $(r_n)_{n\geq 1}$ is a Rademacher sequence which is independent of $(d_n)_{n\geq 1}$,
then \eqref{eq:UMD} and \eqref{eq:UMDreverse} easily imply the two-sided randomised inequality
\begin{equation}\label{eq:UMDrandom}
\frac1{\beta_{p,X}^p}\E\Big\| \sum_{n=1}^N d_n \Big\|^p\leq \E \Big\| \sum_{n=1}^N r_n d_n \Big\|
\leq \beta_{p,X}^p\E \Big\| \sum_{n=1}^N d_n \Big\|^p, \quad \forall N\geq 1,
\end{equation}
In \cite{Ga2} the lower and upper
estimates in \eqref{eq:UMDrandom} were studied for their own sake (see also Remark \ref{rem:UMD-}).

We include the simple observation that within the class of UMD spaces, the notions
of type and martingale type are equivalent (see \cite{Brz1}).

\begin{prop}
Let $p\in [1, 2]$.
If $X$ is a UMD space with type $p$, then $X$ has martingale type $p$ and $\mu_{p,X}\le \beta_{p,X}\tau_{p,X}$.
\end{prop}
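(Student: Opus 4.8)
The plan is to combine the two-sided randomised inequality \eqref{eq:UMDrandom}, which is available in every UMD space, with the type $p$ inequality \eqref{eq:type}. The key observation is that randomising a martingale difference sum by an independent Rademacher sequence produces a Rademacher sum to which the type $p$ estimate can be applied termwise; the martingale structure is used only to pass to the randomised sum via the UMD property.

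First I would fix an arbitrary finite $X$-valued $L^p$-martingale difference sequence $(d_n)_{n=1}^N$ and, on an auxiliary probability space, introduce a Rademacher sequence $(r_n)_{n\ge 1}$ that is independent of $(d_n)_{n\ge 1}$. The left-hand inequality in \eqref{eq:UMDrandom} then gives $\E\n\sum_{n=1}^N d_n\n^p \le \beta_{p,X}^p\,\E\n\sum_{n=1}^N r_n d_n\n^p$, where the expectation on the right runs over both the $d_n$ and the $r_n$.

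Next I would estimate the randomised sum. Freezing a realisation of the $d_n$ (that is, conditioning on the martingale differences and invoking Fubini's theorem), the inner expectation over the Rademacher variables is a Rademacher sum of the now-deterministic vectors $d_1,\dots,d_N$, so the type $p$ inequality \eqref{eq:type} yields $\E_r\n\sum_{n=1}^N r_n d_n\n^p \le \tau_{p,X}^p \sum_{n=1}^N \n d_n\n^p$. Integrating over the $d_n$ gives $\E\n\sum_{n=1}^N r_n d_n\n^p \le \tau_{p,X}^p \sum_{n=1}^N \E\n d_n\n^p$. Combining this with the previous display produces $\E\n\sum_{n=1}^N d_n\n^p \le \beta_{p,X}^p\tau_{p,X}^p \sum_{n=1}^N \E\n d_n\n^p$, which is precisely \eqref{eq:martingaletype} with $\mu_{p,X}\le \beta_{p,X}\tau_{p,X}$.

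I do not expect a genuine obstacle here, since the argument is essentially a two-step reduction once the randomised inequality is in hand. The only points needing a little care are technical: ensuring that the Rademacher sequence is truly independent of the martingale difference sequence, so that \eqref{eq:UMDrandom} is applicable, and justifying the interchange of the two expectations by Fubini's theorem, which is legitimate because all the random variables involved are $p$-integrable. It is worth emphasising that type $p$ is used only for the deterministic vectors obtained after conditioning, so no martingale structure is required in that step; the martingale structure enters solely through the UMD randomisation.
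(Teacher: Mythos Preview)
Your proposal is correct and follows essentially the same route as the paper: apply the left-hand estimate of \eqref{eq:UMDrandom} with an independent Rademacher sequence on an auxiliary probability space, then use Fubini and the type $p$ inequality pointwise in the martingale variables. The paper's proof is just a one-line compression of exactly these two steps.
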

\begin{proof}
Let $(\wt r_n)_{n\geq 1}$ be a Rademacher sequence on another probability space $(\wt\Om,\wt \P)$.
By \eqref{eq:UMDrandom} and Fubini's theorem,
\[
\E \Big\| \sum_{n=1}^N d_n\Big\|^p \leq \beta_{p,X}^p \E\wt \E\Big\| \sum_{n=1}^N \wt r_n d_n\Big\|^p
 \leq \beta_{p,X}^p \tau_{p,X}^p \E \sum_{n=1}^N \|d_n\|^p.
\]
\end{proof}

\subsection{Decoupling}

The extension of Proposition \ref{prop:Ito} to adapted elementary processes will be achieved by
means of a decoupling technique, which allows us to replace the cylindrical Brownian motion $W$
by an independent copy $\wt W$ on a second probability space $\wt \Om$.
With respect to $\wt W$, we may estimate the $L^p$-norms
path-by-path with respect to $\Omega$. The UMD property will provide the relevant estimates
for the decoupled integral in terms of the original integral and vice versa.

We begin with a decoupling inequality for martingale transforms due to McConnell \cite{MC}.
The setting is as follows. We are given a probability space
$(\Omega, \P)$ with filtration $\F$, and independent copies $(\wt \Om, \wt \P)$ and $\wt \F$.
We identify $\F$ and $\wt \F$ with the filtrations on $\Om\times \wt \Om$ given by
$\F\times\{\emptyset,\wt\Om\}$ and $\{\emptyset,\Om\}\times\wt \F$, respectively. In a similar way,
random variables $\xi$ and $\wt \xi$ on $\Omega$ and $\wt \Omega$ are identified with the random variables
$\xi(\om,\wt \om):= \xi(\om)$ and $\wt\xi(\om,\wt \om):= \wt \xi(\wt\om)$
on $\Om\times\wt \Om$, respectively.

\begin{thm}\label{thm:decineq}
Let $X$ be a UMD space and let $p\in (1, \infty)$. Let $(\eta_n)_{n\geq 1}$
be an $\F$-adapted sequence of centered random variables in $L^p(\Omega)$
such that for each $n\geq 1$, $\eta_n$
is independent of $\F_{n-1}$. Let $(\wt{\eta}_n)_{n\geq 1}$ be an independent $\wt \F$-adapted
copy of this sequence in $L^p(\wt{\Omega};X)$.
Finally, let $(v_n)_{n\geq 1}$ be an
$\F$-predictable sequence in $L^\infty(\Om;X)$.
Then, for all $N\geq 1$,
\begin{equation}\label{eq:gendecoupling}
\frac1{\beta_{p,X}^{p}} \E\wt\E\Bigl\|\sum_{n=1}^N v_n \wt{\eta}_n \Bigr\|^p
\leq \E\wt\E\Bigl\|\sum_{n=1}^N v_n \eta_n \Bigr\|^p \leq \beta_{p,X}^p
  \E\wt\E\Bigl\|\sum_{n=1}^N v_n \wt{\eta}_n \Bigr\|^p.
\end{equation}
\end{thm}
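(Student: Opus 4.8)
The plan is to deduce \eqref{eq:gendecoupling} from the UMD inequalities \eqref{eq:UMD} and \eqref{eq:UMDreverse} applied to a single martingale difference sequence on the product space $\Om\times\wt\Om$, obtained by \emph{interleaving} the original summands with the decoupled ones. First I would record the two preliminary facts that make the setup legitimate. Since $v_n$ is $\F$-predictable, hence $\F_{n-1}$-measurable, and $\eta_n$ is centered and independent of $\F_{n-1}$, the products $v_n\eta_n$ form a bounded $X$-valued martingale difference sequence with respect to $(\F_n)$; boundedness of the $v_n$ in $L^\infty(\Om;X)$ together with $\eta_n,\wt\eta_n\in L^p$ guarantees that all the $L^p(\Om\times\wt\Om;X)$-norms occurring below are finite.

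Next I would introduce the interleaved sequence $(\xi_k)_{k=1}^{2N}$ on $\Om\times\wt\Om$ defined by $\xi_{2n-1}:=v_n\eta_n$ and $\xi_{2n}:=v_n\wt\eta_n$, together with the filtration $\G_{2n-1}:=\F_n\vee\wt\F_{n-1}$ and $\G_{2n}:=\F_n\vee\wt\F_n$, where $\F$ and $\wt\F$ are viewed as sub-$\sigma$-algebras of $\Om\times\wt\Om$ as in the paragraph preceding the theorem. The crucial verification, which I expect to be the main technical obstacle, is that $(\xi_k)$ is a $\G$-martingale difference sequence. For the odd terms one uses that $v_n$ is $\F_{n-1}$-measurable while $\eta_n$ is independent of $\F_{n-1}$ (and of $\wt\F_{n-1}$, since $\eta_n$ lives on $\Om$) and centered, so that $\E(\xi_{2n-1}\mid\G_{2n-2})=v_n\,\E\eta_n=0$; for the even terms one uses that $\wt\eta_n$ lives on $\wt\Om$, is independent of $\wt\F_{n-1}$ and of all of $\F$, and is centered, so that $\E(\xi_{2n}\mid\G_{2n-1})=v_n\,\E\wt\eta_n=0$. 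Here the care needed in bookkeeping the two filtrations is exactly what makes these conditional expectations vanish, and it is this martingale difference structure, rather than mere independence, that the UMD property can exploit.

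With $(\xi_k)$ in hand, the two sums appearing in \eqref{eq:gendecoupling} are the complementary sub-sums $\sum_n v_n\eta_n=\sum_{k\ \mathrm{odd}}\xi_k$ and $\sum_n v_n\wt\eta_n=\sum_{k\ \mathrm{even}}\xi_k$. I would then apply the UMD inequality \eqref{eq:UMD} to $(\xi_k)$ with signs chosen to pass from one sub-sum to the other (for instance $\epsilon_{2n-1}=1,\ \epsilon_{2n}=-1$ relates the two selections), and use the reverse estimate \eqref{eq:UMDreverse} for the opposite direction; this produces the upper and lower bounds of \eqref{eq:gendecoupling} with constants controlled by $\beta_{p,X}^{p}$. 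Taking $p$-th powers and noting that the argument is uniform in $N$ completes the proof. I expect the only genuinely delicate point to be the adaptedness and martingale difference verification of the interleaved sequence across the two filtrations, as described above; once that is secured, the conclusion is a direct application of the defining UMD estimates.
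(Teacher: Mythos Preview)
Your interleaved sequence $(\xi_k)$ is indeed a martingale difference sequence with respect to $(\G_k)$, and your verification of this is correct. The gap is in the final step. There is no choice of signs $\epsilon_k\in\{-1,1\}$ for which $\sum_k \epsilon_k \xi_k$ equals either of the two sub-sums you want to compare: with your $\xi_k$ one has $\sum_k\xi_k=\sum_n v_n\eta_n+\sum_n v_n\wt\eta_n$, while your suggested choice $\epsilon_{2n-1}=1$, $\epsilon_{2n}=-1$ gives $\sum_n v_n\eta_n-\sum_n v_n\wt\eta_n$. Thus \eqref{eq:UMD} and \eqref{eq:UMDreverse} compare the $L^p$-norms of the \emph{sum} and the \emph{difference} of the two quantities appearing in \eqref{eq:gendecoupling}, not the quantities themselves. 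Passing to $\{0,1\}$-valued multipliers (which UMD permits with the same constant, by convexity) yields only the one-sided bounds $\bigl\|\sum_n v_n\eta_n\bigr\|_p\le\beta_{p,X}\bigl\|\sum_k\xi_k\bigr\|_p$ and $\bigl\|\sum_n v_n\wt\eta_n\bigr\|_p\le\beta_{p,X}\bigl\|\sum_k\xi_k\bigr\|_p$; since a $\{0,1\}$-transform is not invertible there is no reverse inequality, and you are left without any comparison between the two sub-sums.

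The paper repairs this by interleaving differently: it sets $d_{2n-1}:=\tfrac12 v_n(\eta_n+\wt\eta_n)$ and $d_{2n}:=\tfrac12 v_n(\eta_n-\wt\eta_n)$, so that $\sum_j d_j=\sum_n v_n\eta_n$ and $\sum_j(-1)^{j+1}d_j=\sum_n v_n\wt\eta_n$ exactly, after which a single application of \eqref{eq:UMD} and \eqref{eq:UMDreverse} gives both inequalities with the constant $\beta_{p,X}$. The price is that the martingale-difference verification becomes more delicate: with $\mathcal{G}_{2n-1}=\sigma(\F_{n-1}\times\wt\F_{n-1},\,\eta_n+\wt\eta_n)$ one must show $\E(\eta_n-\wt\eta_n\mid\eta_n+\wt\eta_n)=0$, which uses that $\eta_n$ and $\wt\eta_n$ are independent and \emph{identically distributed}. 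This is precisely where the hypothesis that $(\wt\eta_n)$ is a copy of $(\eta_n)$, rather than merely some independent centered sequence, enters the argument.
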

This decoupling inequality was further extended in \cite{MC} to more general martingale
difference sequences.

\begin{proof}
The functions $\eta_n:\Omega\to X$ and $\wt{\eta}_{n}: \wt\Omega\to X$ will be interpreted as
functions on $\Omega\times\wt{\Omega}$ by considering
$(\omega,\wt{\omega})\mapsto \eta_n(\omega)$ and
$(\omega,\wt{\omega})\mapsto \wt{\eta}_n(\wt{\omega})$, respectively.

For $n=1,\dots,N$ define
$$ d_{2n-1} := \tfrac12 v_n(\eta_n + \wt{\eta}_n)\quad \hbox{and}\quad
     d_{2n} := \tfrac12 v_n(\eta_n - \wt{\eta}_n).$$
We claim that
$(d_j)_{j=1}^{2N}$ is an $L^p$-martingale difference sequence with respect to the
filtration $({\mathcal G}_j)_{j=1}^{2N}$, where for $n\geq 1$,
\begin{align*}
{\mathcal G}_{2n-1}  = \sigma(\F_{{n-1}}\times\wt\F_{{n-1}},
\eta_n+\wt{\eta}_n)
\quad \hbox{and}\quad
{\mathcal G}_{2n} = \F_{n}\times\wt\F_{n},
\end{align*}
with $\F_{n}\times\wt\F_{n}$ denoting the product $\sigma$-algebra.
Clearly, $(d_n)_{n=1}^{2 N}$ is $(\mathcal{G}_n)_{n=1}^{2N}$-adapted. For $n=1,
\ldots, N$,
\[\CE{d_{2n+1}}{\mathcal{G}_{2n}}
= \tfrac12 v_{n+1}\CE{\eta_{n+1}+\wt{\eta}_{n+1}}{\mathcal{G}_{2n}}
=\tfrac12 v_{n+1}(\E\eta_{n+1}+\wt{\E}\wt{\eta}_{n+1})=0,\]
since $\eta_{n+1}$ and $\wt{\eta}_{n+1}$ are independent of
$\mathcal{G}_{2n}$ and centered.
For $n=1, \ldots, N$,
\[\CE{d_{2n}}{\mathcal{G}_{2n-1}} =\tfrac12 v_n\CE{\eta_n - \wt{\eta}_n}{ \mathcal{G}_{2n-1}}
\stackrel{\rm(i)}{=}  \tfrac12 v_n\CE{\eta_n - \wt{\eta}_n}{\eta_n
+\wt{\eta}_n}\stackrel{\rm(ii)}{=} 0.\]
Here (i) follows from the independence
of $\sigma(\eta_n,\wt{\eta}_n)$ and $\F_{{n-1}}\times\wt\F_{{n-1}}$.
For the identity (ii) let $B\subseteq X$ be a Borel set.
Let $\nu$ and $\wt{\nu}$ denote the image measure of $\eta_n$ and $\wt{\eta}_n$ on $\mathcal{B}(X)$, respectively.
Then $\nu = \wt{\nu}$ and therefore
\begin{align*}
\E\wt\E \one_{\{\eta_n+\wt{\eta}_n\in B\}} \, \eta_n  &=
\int_X \int_X \one_{\{x+y\in B\}} \, x \, d\nu(x) d\nu(y) \\ & =
\int_X \int_X \one_{\{x+y\in B\}} y \, d\nu(y) d\nu(x)
=  \E\wt\E \one_{\{\eta_n+\wt{\eta}_n\in B\}} \, \wt{\eta}_n,
\end{align*}
which gives (ii) and also finishes the proof of the claim.

Now since
\[
\sum_{n=1}^N v_n\eta_n  = \sum_{j=1}^{2N} d_j\quad\hbox{ and }\quad
\sum_{n=1}^N v_n\wt{\eta}_n  = \sum_{j=1}^{2N} (-1)^{j+1}d_j,
\]
the result follows from the UMD property applied to the sequences $(d_j)_{j=1}^{2N}$ and
$((-1)^{j+1}d_j)_{j=1}^{2N}$.
\end{proof}

\subsection{Random integrands} We are now in a position to prove sharp estimates for the stochastic
integrals of adapted elementary processes. Similar to what we did in the case of elementary functions, we will
identify an adapted elementary process with an element of
$$ (L^2(\R_+) \otimes L^p(\Om)) \otimes (H\otimes X) \simeq
L^p(\Om) \otimes((L^2(\R_+)\otimes H)\otimes X).$$
In the next theorem we identify the right-hand side with a dense subspace
of $L^p(\Omega;\gamma(L^2(\R_+;H),X))$.

\begin{thm}[It\^o isomorphism]\label{thm:Itoisomorph}
Let $X$ be a UMD space and let $p\in (1, \infty)$.
For all adapted elementary processes $\phi:\R_+\times\Omega\to H\otimes X$ we have
\begin{align*} \frac1{\beta_{p,X}}\|\phi\|_{L^p(\Omega;\gamma^p(L^2(\R_+;H),X))}
& \leq \Big\| \int_0^\infty \phi\,dW \Big\|_{L^p(\Omega;X)}
\\ & \leq  \beta_{p,X}\|\phi\|_{L^p(\Omega;\gamma^p(L^2(\R_+;H),X))}.
\end{align*}
\end{thm}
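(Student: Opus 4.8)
The plan is to combine the decoupling inequality of Theorem \ref{thm:decineq} with the It\^o isometry of Proposition \ref{prop:Ito}, the latter now applied path-by-path over $\Omega$. The key idea is that decoupling replaces $W$ by an independent copy $\wt W$ at the cost of factors $\beta_{p,X}^{\pm 1}$, and once $W$ has been replaced by the independent $\wt W$ the coefficients coming from the original probability space are frozen as constants, so that the decoupled integral reduces for each fixed $\omega\in\Omega$ to a deterministic-integrand integral on $\wt\Omega$ of exactly the type handled by Proposition \ref{prop:Ito}.

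Concretely, I would write $\phi$ in the canonical form \eqref{eq:phi}, so that
$$\int_0^\infty \phi \, dW = \sum_{n=1}^N \sum_{m=1}^M \sum_{j=1}^k \one_{F_{mn}}\, W(\one_{(t_{n-1}, t_n]} \otimes h_j) \otimes x_{jmn}.$$
Setting $\gamma_{jn} := W(\one_{(t_{n-1}, t_n]} \otimes h_j)/(t_n - t_{n-1})^{1/2}$, for each fixed $n$ the variables $\gamma_{1n}, \dots, \gamma_{kn}$ are independent standard Gaussians that are independent of $\F_{t_{n-1}}$. Ordering the pairs $(n, j)$ lexicographically and taking $\eta_{(n,j)} := \gamma_{jn}$ together with the predictable coefficients $v_{(n,j)} := (t_n - t_{n-1})^{1/2} \sum_m \one_{F_{mn}} x_{jmn} \in L^\infty(\Omega; X)$ puts the integral into the form $\sum v_n \eta_n$ to which Theorem \ref{thm:decineq} applies, using the filtration that adjoins the $\gamma_{jn}$ one block at a time. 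Since $\sum v_n\eta_n$ does not depend on $\wt\Omega$, this yields
$$\frac{1}{\beta_{p,X}^p}\, \E \wt\E \Big\| \int_0^\infty \phi \, d\wt W \Big\|^p \leq \E \Big\| \int_0^\infty \phi \, dW \Big\|^p \leq \beta_{p,X}^p \, \E \wt\E \Big\| \int_0^\infty \phi \, d\wt W \Big\|^p,$$
where $\int_0^\infty \phi \, d\wt W = \sum_{n,m,j} \one_{F_{mn}}\, \wt W(\one_{(t_{n-1}, t_n]} \otimes h_j) \otimes x_{jmn}$ is the decoupled integral, still carrying the original indicators $\one_{F_{mn}}$.

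The final step evaluates the decoupled term. For fixed $\omega \in \Omega$ the indicators $\one_{F_{mn}}(\omega)$ are constants, so $\phi(\cdot, \omega)$ is an elementary function in the sense of the deterministic theory and $\int_0^\infty \phi(\cdot, \omega) \, d\wt W$ is precisely the integral of Proposition \ref{prop:Ito}, now with respect to $\wt W$ on $\wt\Omega$. Hence $\wt\E \| \int_0^\infty \phi(\cdot, \omega) \, d\wt W \|^p = \| \phi(\cdot, \omega) \|_{\gamma^p(L^2(\R_+;H), X)}^p$, and integrating over $\Omega$ by Fubini gives $\E \wt\E \| \int_0^\infty \phi \, d\wt W \|^p = \| \phi \|_{L^p(\Omega; \gamma^p(L^2(\R_+;H), X))}^p$. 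Substituting this into the decoupling estimate and taking $p$-th roots yields the two-sided bound.

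The main obstacle I anticipate is the bookkeeping in the decoupling step rather than any conceptual difficulty: one must choose the multi-index ordering and the interpolating filtration so that each $\eta_{(n,j)}$ is genuinely independent of all preceding data (which holds because the Gaussians within a block are mutually independent and independent of $\F_{t_{n-1}}$) while each coefficient $v_{(n,j)}$ is predictable with respect to that same filtration. Verifying these measurability and independence conditions, and confirming that the decoupled sum indeed coincides with $\int_0^\infty \phi \, d\wt W$, is the only genuinely technical part; everything else follows from the isometry applied conditionally on $\Omega$.
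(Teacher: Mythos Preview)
Your proposal is correct and follows essentially the same route as the paper: represent $\phi$ in the canonical form \eqref{eq:phi}, write the stochastic integral as a martingale transform $\sum v_{jn}\eta_{jn}$ with $\eta_{jn}$ the (unnormalised) Gaussians $W(\one_{(t_{n-1},t_n]}\otimes h_j)$ and $v_{jn}$ the $\F_{t_{n-1}}$-measurable coefficients, order the indices lexicographically with the filtration $\F_{jn}=\sigma(\F_{t_{n-1}},\eta_{1n},\dots,\eta_{jn})$, apply the decoupling inequality of Theorem~\ref{thm:decineq}, and then evaluate the decoupled integral pointwise in $\omega$ via Proposition~\ref{prop:Ito} and Fubini. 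The only cosmetic difference is your normalisation of the Gaussians, which is absorbed into the predictable factor; your phrase ``adjoins the $\gamma_{jn}$ one block at a time'' should be read as adjoining them one at a time within each $n$-block (as your final paragraph makes clear), which is exactly the filtration the paper uses.
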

\begin{proof}
Let $\wt W$ be an $H$-cylindrical Brownian motion on a probability space $\wt \Omega$. As before
we may view $W$ and $\wt W$ as independent $H$-cylindrical Brownian motions on $\Om\times\wt \Om$.

We may represent $\phi$ as in \eqref{eq:phi}, i.e.,
$$
 \phi = \sum_{n=1}^N \one_{(t_{n-1}, t_n]} \sum_{m=1}^M \one_{F_{mn}} \otimes
\sum_{j=1}^k h_j \otimes x_{jmn},$$
where $(h_j)_{j=1}^k$ is orthonormal in $H$,
for each $1\le n\le N$ the sets $F_{mn}$, $1\le m\le M$, are disjoint
and belong to $\F_{t_{n-1}}$, and the vectors $x_{jmn}$ are taken from $X$.
We view $\phi$ as being defined on $\Om\times\wt\Om$.

Define, for $1\le j\le k$ and $1\le n\le N$,
$$
    \eta_{jn} := W(\one_{(t_{n-1},t_n]}\otimes h_{j}), \quad   \wt\eta_{jn} := \wt W(\one_{(t_{n-1},t_n]}\otimes h_{j}),
$$ and
$$
   v_{jn}  := \sum_{m=1}^M 1_{F_{mn}}\otimes x_{jmn}.
$$
With these notations,
$$ \int_0^T \phi\,d W = \sum_{n=1}^N\sum_{j=1}^k v_{jn}\eta_{jn}, \quad
\int_0^T \phi\,d \wt W = \sum_{n=1}^N\sum_{j=1}^k v_{jn} \wt \eta_{jn}.
$$
We consider the filtration $(\F_{jn})$,
where
$$\F_{jn} = \sigma(\F_{t_{n-1}}, \eta_{1n}, \dots, \eta_{jn});$$
the indices $(jn)$ are ordered
lexicographically by the rule $(j',n')\le (j,n) \Longleftrightarrow
n'< n$ or [$n'=n\ \& \ j'\le j$].
By Theorem \ref{thm:decineq},
\begin{align*}
\ & \frac1{\beta_{p,X}} \Bigl\|\sum_{n=1}^N\sum_{j=1}^k v_{jn}\wt \eta_{jn}  \Bigr\|_{L^p(\Omega\times\wt{\Omega};X)}
\\ & \qquad \qquad \leq
\Bigl\|\sum_{n=1}^N\sum_{j=1}^k v_{jn}\eta_{jn} \Bigr\|_{L^p(\Omega;X)} \leq \beta_{p,X}
\Bigl\|\sum_{n=1}^N\sum_{j=1}^k v_{jn}\wt\eta_{jn} \Bigr\|_{L^p(\Omega\times\wt{\Omega};X)}.
\end{align*}
On the other hand, by Proposition \ref{prop:Ito}, for each $\omega\in \Omega$ we have
\[\Bigl\|\sum_{n=1}^N v_n(\omega)\wt{\eta}_n   \Bigr\|_{L^p(\wt{\Omega};X)} =
\|{\phi}(\omega)\|_{\gamma^p(L^2(\R_+;H), X)}.\]
Therefore, by Fubini's theorem,
\[\Bigl\|\sum_{n=1}^N\sum_{j=1}^k v_{jn}\eta_{jn}  \Bigr\|_{L^p(\Omega\times \wt{\Omega};X)}
= \|{\phi}\|_{L^p(\Omega;\gamma^p(L^2(\R_+;H), X))}.\]
\end{proof}
By an application of Doob's inequality, for $1<p<\infty$ we obtain the equivalence of
norms
\begin{equation}\label{eq:UMD-BDG}
\begin{aligned}
\ & \frac1{\beta_{p,X}} \|\phi\|_{L^p(\Omega;\gamma^p(L^2(\R_+;H),X))}
\\ & \qquad \leq \Big(\E \sup_{t\geq 0} \Big\|\int_0^t \phi\,dW \Big\|^p\Big)^{1/p}
 \leq  \frac{p }{p-1} \beta_{p,X}\|\phi\|_{L^p(\Omega;\gamma^p(L^2(\R_+;H),X))}.
\end{aligned}
\end{equation}
By a standard application of Lenglart's inequality \cite{Lenglart}, this equivalence
extends to all exponents $0<p<\infty$ with different constants
(see Remark \ref{rem:kleinepLenglart} below).
This yields the UMD analogue of the
Burkholder inequality of Theorem \ref{thm:Burk-Mtype2}. It is interesting to observe that
no additional argument is needed to pass from the case $p=2$ to the case $1<p<\infty$;
the result for $1<p<\infty$ is obtained right away from the decoupling inequalities.

By Theorem \ref{thm:Itoisomorph}, the stochastic integral can be extended to the closure in
$L^p(\Om;\gamma(L^2(\R_+;H),X))$ of all adapted elementary processes. We shall denote this closure by
$L^p_\F(\Om;\gamma(L^2(\R_+;H),X))$. In this way
the stochastic integral defines an isomorphic embedding
\begin{align}\label{eq:I-isom}
I:L^p_\F(\Om;\gamma(L^2(\R_+;H),X)) \to L^p(\Om,\F_\infty;X).
\end{align}
Moreover, by \eqref{eq:UMD-BDG}, the indefinite stochastic
integral defines an isomorphic embedding of $L^p_\F(\Om;\gamma(L^2(\R_+;H),X))$
into $L^p(\Om;C_{\rm b}(\R_+;X))$.

In the special case of the augmented filtration $\F^W$ generated by $W$,
the isomorphic embedding \eqref{eq:I-isom} is actually onto (pass to the
limit $T\to\infty$ in the corresponding result for finite time intervals in \cite[Theorem 3.5]{NVW1})
and we obtain an isomorphism of Banach spaces
$$ I : L^p_{\F^W}(\Om;\gamma(L^2(\R_+;H),X)) \simeq L^p(\Om,\F_\infty^W;X).$$
This result contains a martingale representation theorem: every $\F_\infty^W$-meas\-urable
random variable in $L^p(\Om;X)$ is the stochastic integral of a suitable element of
$L^p_{\F^W}(\Om;\gamma(L^2(\R_+;H),X))$.

We continue with a description of
$L^p_\F(\Om;\gamma(L^2(\R_+;H),X))$ (the definition of which extends
to $p\in [0,\infty)$ in the obvious way). A proof can be found in
\cite[Proposition 2.10]{NVW1}.

\begin{prop}
Let $p\in [0, \infty)$. For an element $\phi\in L^p(\Omega;\gamma(L^2(\R_+;H),X))$ the following assertions are
equivalent:
\begin{enumerate}[\rm(1)]
 \item $\phi\in L^p_{\F}(\Omega;\gamma(L^2(\R_+;H),X))$;
 \item the random variable $\lb\phi(\one_{[0,t]}f),x\s\rb\in L^p(\Omega)$ is $\F_t$-measurable for all
$t\in \R_+$, $f\in L^2(\R_+;H)$, and $x\s\in X\s$.
\end{enumerate}
\end{prop}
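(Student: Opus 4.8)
The plan is to set $\mathcal{V}$ equal to the collection of $\phi\in L^p(\Om;\gamma(L^2(\R_+;H),X))$ satisfying (2), prove the easy inclusion $L^p_\F\subseteq\mathcal V$ by a closedness argument, and then prove $\mathcal V\subseteq L^p_\F$ by approximating each $\phi\in\mathcal V$ by adapted elementary processes. For the first inclusion, note that for fixed $t,f,x\s$ the map $\phi\mapsto\lb\phi(\one_{[0,t]}f),x\s\rb$ is continuous from $L^p(\Om;\gamma(L^2(\R_+;H),X))$ to $L^p(\Om)$, since $\gamma(L^2(\R_+;H),X)\embed\calL(L^2(\R_+;H),X)$ contractively and evaluation at a fixed vector followed by $x\s$ is bounded. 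A direct computation on a generator $\one_{(s,u]\times F}\otimes(h\otimes x)$ with $F\in\F_s$ shows that the pairing vanishes for $t\le s$ and is $\F_s$-measurable for $t>s$, so every adapted elementary process lies in $\mathcal V$. Since $L^p(\Om,\F_t)$ is a closed subspace of $L^p(\Om)$ (for $p\in[0,1)$ we use convergence in measure), property (2) is preserved under $L^p$-limits, whence $L^p_\F\subseteq\mathcal V$.

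For the converse, fix $\phi\in\mathcal V$ and approximate it in the operator picture by precomposition with filtration-compatible finite-rank operators on $L^2(\R_+;H)$. Fix an orthonormal basis $(h_j)$ of $H$, a truncation level $k$, a horizon $T$, and a uniform partition $t_i=i\delta$ of $[0,T]$. Let $R=R_{\delta,k,T}$ be the operator that projects the $H$-component onto $\mathrm{span}(h_1,\dots,h_k)$, truncates to $[0,T]$, and on each interval returns the average over the \emph{following} interval, i.e. $(Rg)|_{(t_{i-1},t_i]}=\delta^{-1}\int_{t_i}^{t_{i+1}}g$ in the $h_j$-components. A short computation gives
\[
\phi\circ R=\sum_{i}\sum_{j\le k} g_{ij}'\otimes\xi_{ij},\qquad
g_{ij}':=\delta^{-1/2}\one_{(t_i,t_{i+1}]}\otimes h_j,\quad
\xi_{ij}:=\delta^{-1/2}\phi\bigl(\one_{(t_{i-1},t_i]}\otimes h_j\bigr),
\]
where the $g_{ij}'$ are orthonormal in $L^2(\R_+;H)$ and each $\xi_{ij}\in L^p(\Om;X)$ because evaluation at a fixed vector is bounded on $\gamma(L^2(\R_+;H),X)$.

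The crucial structural point is the adaptedness of the coefficients. Since $\one_{(t_{i-1},t_i]}\otimes h_j=\one_{[0,t_i]}(\one_{(t_{i-1},t_i]}\otimes h_j)$ is supported in $[0,t_i]$, assumption (2) with $t=t_i$ gives that $\lb\xi_{ij},x\s\rb$ is $\F_{t_i}$-measurable for every $x\s$; the Pettis measurability theorem then upgrades this to strong $\F_{t_i}$-measurability of $\xi_{ij}$. Because in $\phi\circ R$ this coefficient multiplies the indicator of the \emph{next} interval $(t_i,t_{i+1}]$, the process $\phi\circ R$ is predictable, and approximating each $\xi_{ij}\in L^p(\Om,\F_{t_i};X)$ by $\F_{t_i}$-simple functions $\sum_m\one_{F_m}x_m$ exhibits $\phi\circ R$ as an $L^p(\Om;\gamma(L^2(\R_+;H),X))$-limit of adapted elementary processes. \textbf{This is where I expect the main obstacle.} The naive right-endpoint discretization, with $\xi_{ij}$ sitting on its own interval $(t_{i-1},t_i]$, is only $\F_{t_i}$-measurable there and hence adapted but \emph{not} predictable, so it is not an admissible integrand; the forward time-shift built into $R$ (reading the average off the following interval) is precisely the device that relocates each $\F_{t_i}$-measurable coefficient onto $(t_i,t_{i+1}]$ without spoiling the approximation, and it does so without requiring any time-regularity of $\phi$.

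It remains to verify $\phi\circ R\to\phi$ in $L^p(\Om;\gamma(L^2(\R_+;H),X))$ as $\delta\to0$ and $k,T\to\infty$. Here I invoke the right-ideal property of $\gamma$-radonifying operators: $\|\phi\circ S\|_\gamma\le\|S\|\,\|\phi\|_\gamma$, and $\phi\circ S_n\to\phi$ in $\gamma(L^2(\R_+;H),X)$ whenever $\sup_n\|S_n\|<\infty$ and the adjoints $S_n\s\to I$ strongly on $L^2(\R_+;H)$. One checks $\|R\|\le1$ and that $R\s$ — the causal, forward-shifted moving average composed with the finite projection and truncation — converges strongly to the identity; the strong convergence of this adjoint is the one genuinely technical point. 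Applying the lemma pointwise in $\om$ gives $\phi(\om)\circ R\to\phi(\om)$ in $\gamma(L^2(\R_+;H),X)$ for a.e.\ $\om$, dominated by $\|\phi(\om)\|_\gamma\in L^p(\Om)$, so dominated convergence (convergence in measure for $p=0$) yields the claim. Hence $\phi\in L^p_\F$, proving $\mathcal V\subseteq L^p_\F$ and the equivalence. Throughout, for $p\in[0,1)$ one works with the metric of the corresponding F-space, and all the continuity, closedness, and domination statements persist.
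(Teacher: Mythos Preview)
The paper does not include a proof of this proposition; it cites \cite[Proposition 2.10]{NVW1}. Your argument is correct and is essentially the approach used there: the forward-shifted averaging operator $R$ (reading off the \emph{next} interval so that the $\F_{t_i}$-measurable coefficient $\xi_{ij}$ sits on $(t_i,t_{i+1}]$) is exactly the device used in \cite{NVW1} to produce adapted-elementary approximants, and the convergence $\phi\circ R\to\phi$ via the right-ideal property together with $R^{*}\to I$ strongly is the standard mechanism.

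Two minor points worth tightening. First, your application of the Pettis theorem to upgrade weak $\F_{t_i}$-measurability of $\xi_{ij}$ to strong $\F_{t_i}$-measurability implicitly assumes the $\P$-null set outside which $\xi_{ij}$ is separably valued lies in $\F_{t_i}$; this is automatic once the filtration is completed, and otherwise one argues directly via a countable norming subset of $X^{*}$ for the separable essential range. Second, the lemma you invoke (``$\phi\circ S_n\to\phi$ in $\gamma$ whenever $\sup_n\|S_n\|<\infty$ and $S_n^{*}\to I$ strongly'') is indeed true and standard---it follows from a $3\varepsilon$-argument after reducing to finite-rank $\phi$, where $\phi\circ S_n=\sum_m (S_n^{*}h_m)\otimes x_m$---but it may be worth stating explicitly, since it is the linchpin of the convergence step.
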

In particular
if $\phi: \R_+\times \Omega\to \calL(H,X)$ is {\em $H$-strongly measurable and adapted},
in the sense that for all
$h\in H$ the $X$-valued process $\phi h:\R_+\times\Om\to X$ is strongly measurable and adapted,
then $\phi\in L^p(\Omega;\gamma(L^2(\R_+;H),X))$ implies $\phi\in L_\F^p(\Omega;\gamma(L^2(\R_+;H),X))$.
Indeed, in that case, for all $h\in H$ and $x\s\in X\s$ the process $[ h, \phi\s x\s]$ is measurable and adapted. Since
$\phi:\Om\to\gamma(L^2(\R_+;H),X)$ is strongly measurable and
elements in
$\gamma(L^2(\R_+;H)$ are separably supported (see \cite[Section 3]{NeeCMA}), we may assume that $H$
is separable, and then the Pettis measurability theorem implies that the $H$-valued process $\phi\s x\s$ is
strongly measurable and adapted. Passing to a progressively measurable
version of $\phi\s x\s$ (see \cite{OndSei} for a short existence proof), we see that
$\lb\phi(\one_{[0,t]}f),x\s\rb$ is equal almost surely to a strongly $\F_t$-measurable random variable.

In the special case $X = L^q(\mu)$ with $1<q<\infty$,
combination of \eqref{eq:UMD-BDG} with \eqref{eq:gammaLp} gives the following two-sided inequality for a measurable and adapted
processes $\phi:\R_+\times\O\to L^q(\mu;H)$: if $\phi\in L^p(\O;L^q(\mu;L^2(\R_+;H)))$ for some $0<p<\infty$, then
\begin{align}\label{eq:itoisom}
\E \sup_{t\geq 0} \Big\|\int_0^\cdot \phi\,dW \Big\|_{L^q(\mu)}^p \eqsim_{p,q} \E\|\phi\|_{L^q(\mu;L^2(\R_+;H)))}^p.
\end{align}

The next step in the construction of the UMD stochastic integral consists in a localisation argument.
The process
$$ \zeta := \int_0^\cdot \phi\,dW$$
is a continuous martingale,
and by standard stopping time techniques (see \cite[Lemma 4.6]{RY})
one proves the following inequalities, valid for all
$\delta>0$ and $\varepsilon>0$:
\begin{align} \label{probineq1E}
\ & \P\big(\|\zeta\|_{C_{\rm b}(\R_+;X)}>\varepsilon\big)
\\ & \qquad  \leq \varepsilon^{-p}
{C_{p,X}^p \E(\delta^p \wedge \|\phi\|_{\gamma^p(L^2(\R_+;H),X)}^p)} + \P\bigl(\|\phi\|_{\gamma^p(L^2(\R_+;H),X)}\geq
\delta\bigr),
\end{align}
where
$C_{p,X} = \frac{p}{p-1} \beta_{p,X}$, and
\begin{align}\label{probineq2E}
\ & \P\bigl(\|\phi\|_{\gamma^p(L^2(\R_+;H),X)}>\varepsilon\bigr)
\\ & \qquad \leq \varepsilon^{-p}{\beta_{p,X}^p \E(\delta^p \wedge \|\zeta\|_{C_{\rm b}(\R_+;X)}^p)}
+ \P\big(\|\zeta\|_{C_{\rm b}(\R_+;X)}\geq \delta\big).
\end{align}
A direct consequence is that the stochastic integral $I: \phi\mapsto \int_0^\cdot \phi\,dW$
uniquely extends to a continuous linear embedding
\begin{align}\label{eq:L0-isom}
 I: L^0_{\F}(\Omega;\gamma(L^2(\R_+;H),X)) \to L^0(\Omega;C_{\rm b}(\R_+;X))).
\end{align}
For the details we refer to  \cite{NVW1}. We call
$I\phi$ the stochastic integral of $\phi$ with respect to $W$ and write
$$ \int_0^t \phi\,dW = I\phi(t), \quad t\ge 0, \ \phi\in L^0_{\F}(\Omega;\gamma(L^2(\R_+;H),X)) .$$

\begin{rem}\label{rem:kleinepLenglart}
Fix $p\in (1, \infty)$ and $0<q<p$.
Taking $\varepsilon = \delta$ in the above estimates
and integrating with
respect to $d \varepsilon^q$ one obtains that (see \cite[Proposition 4.7]{RY} for a similar argument)
\begin{align*}
\ & \Big(\frac{p-q}{\beta_{p,X} p}\Big)^{1/q}\|\phi\|_{L^q(\Omega;\gamma^p(L^2(\R_+;H),X))}
\\ & \qquad \leq \Big(\E \sup_{t\geq 0} \Big\|\int_0^t \phi\,dW \Big\|^q\Big)^{1/q}
 \leq  \Big(\frac{p C_{p,X}}{p-q}\Big)^{1/q} \|\phi\|_{L^q(\Omega;\gamma^p(L^2(\R_+;H),X))}.
\end{align*}
\end{rem}

Up to this point we have set up the abstract stochastic integral by a density argument, starting from
adapted elementary processes. The next result, taken from \cite[Theorem 4.1]{OndrVer2013},  gives
a criterion which enables one to decide whether
a given operator-valued stochastic process belongs to the closure of the adapted elementary processes.
Earlier versions of this result, as well as related characterisations, can be found in \cite{NVWco, NVW1}.

\begin{thm}\label{thm:bddpaths}
Let $X$ be a UMD Banach space. Let $\phi:\R_+\times\Om\to \calL(H,X)$ be an
$H$-strongly measurable adapted
process such that $\phi\s x\s \in L^0(\Om;L^2(\R_+;H))$ for all
$x\s\in X\s$. Let
$\zeta:\R_+\times\Om \to X$ be a process whose paths are almost surely bounded.
If for all $x^*\in X^*$ almost surely, one has
\[\int_0^t \phi^* x^* \, d W = \lb \zeta_t, x^*\rb,  \ \ \ t\in \R_+,\]
then $\phi$ represents an element in $L^0_{\F}(\Om;\g(L^2(\R_+;H),X))$, and almost surely one has
\begin{equation}\label{eq:zetastrong}
\int_0^t \phi \, d W = \zeta_t,  \ \ \ t\in \R_+.
\end{equation}
Moreover, $\zeta$ is a local martingale with continuous paths almost surely.
\end{thm}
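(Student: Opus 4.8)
The plan is to reduce the statement about the operator-valued process $\phi$ to the already-established scalar criterion of the earlier proposition characterising $L^p_\F(\Om;\gamma(L^2(\R_+;H),X))$, and then invoke the extended stochastic integral \eqref{eq:L0-isom}. The key hypothesis is that for each $x\s\in X\s$ we have the pathwise identity $\int_0^t \phi\s x\s\,dW = \lb\zeta_t,x\s\rb$ almost surely, where $\phi\s x\s\in L^0(\Om;L^2(\R_+;H))$ is scalar-valued and its integral is the classical (scalar) It\^o integral. The first step is to verify that $\phi$ does indeed represent an element of $L^0_\F(\Om;\gamma(L^2(\R_+;H),X))$. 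Since $\phi$ is $H$-strongly measurable and adapted, the remarks following the characterisation proposition already show that membership in $L^0(\Om;\gamma(L^2(\R_+;H),X))$ would force membership in the adapted subspace $L^0_\F$; so the heart of the matter is to produce the required $\gamma$-radonifying integrability, and this is where the boundedness of $\zeta$ must be exploited.

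**Passing from scalar integrals to the vector-valued integral via a localisation.** The hard part will be upgrading the scalar hypotheses to the $\gamma(L^2(\R_+;H),X)$-valued setting without assuming a priori that $\phi$ is $\gamma$-integrable. My strategy is to run a stopping-time/localisation argument built on the weak inequalities \eqref{probineq1E}--\eqref{probineq2E}. Concretely, introduce stopping times $\tau_k$ that keep $\|\zeta\|_{C_{\rm b}}$ bounded (possible since the paths of $\zeta$ are a.s.\ bounded), and consider the truncated processes $\phi\one_{[0,\tau_k]}$. For each fixed $x\s$, the scalar identity $\int_0^{t\wedge\tau_k}\phi\s x\s\,dW = \lb\zeta_{t\wedge\tau_k},x\s\rb$ holds, and the stopped $\zeta$ is uniformly bounded. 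Then inequality \eqref{probineq2E}, read for the truncated data, controls $\P(\|\phi\one_{[0,\tau_k]}\|_{\gamma^p(L^2(\R_+;H),X)}>\eps)$ in terms of $\|\zeta^{\tau_k}\|_{C_{\rm b}}$, which is bounded; letting $\eps\to\infty$ and then $k\to\infty$ shows $\|\phi\|_{\gamma^p(L^2(\R_+;H),X)}<\infty$ almost surely. This yields $\phi\in L^0(\Om;\gamma(L^2(\R_+;H),X))$, and by adaptedness $\phi\in L^0_\F$.

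**Identifying the integral and confirming the martingale property.** Once $\phi\in L^0_\F(\Om;\gamma(L^2(\R_+;H),X))$, the extended embedding \eqref{eq:L0-isom} defines $I\phi = \int_0^\cdot\phi\,dW \in L^0(\Om;C_{\rm b}(\R_+;X))$. To prove \eqref{eq:zetastrong}, I would test both $I\phi$ and $\zeta$ against an arbitrary $x\s\in X\s$: the stochastic integral commutes with the action of $x\s$, so $\lb (I\phi)_t, x\s\rb = \int_0^t \phi\s x\s\,dW = \lb\zeta_t,x\s\rb$ almost surely for each $t$. Since $X\s$ separates points and both processes have (almost surely) continuous, respectively bounded, paths, a standard argument over a countable norming subset of $X\s$ together with path-continuity upgrades this to the almost-sure pathwise identity $\int_0^t\phi\,dW=\zeta_t$ for all $t\in\R_+$ simultaneously. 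Finally, the local martingale property follows because $I\phi$ is by construction a continuous local martingale (it is the $L^0$-limit of stopped genuine martingales obtained from the elementary-process approximation), and the identity \eqref{eq:zetastrong} transfers this property to $\zeta$.

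**Main obstacle.** The genuine difficulty is the integrability bootstrapping in the second paragraph: a priori one only knows the scalar integrals exist, and one must extract $\gamma$-radonifying integrability of the full operator-valued $\phi$ from the mere boundedness of $\zeta$'s paths. The tension lies in that the weak-type estimate \eqref{probineq2E} is formally stated for processes already known to lie in the domain; applying it to the truncations requires first approximating $\phi\one_{[0,\tau_k]}$ by adapted elementary processes in a way compatible with the scalar identities, which is the delicate technical point that the localisation must be engineered to survive.
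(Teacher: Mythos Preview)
The paper does not supply a proof of this theorem: it is quoted from \cite[Theorem~4.1]{OndrVer2013} (with earlier versions in \cite{NVWco,NVW1}) and no argument is given in the text. So there is no in-paper proof to compare against; I can only assess your outline on its own terms.

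Your proposal has a genuine gap, which you yourself flag in the final paragraph but do not close. The inequality \eqref{probineq2E} is derived for processes $\phi$ that are \emph{already} known to belong to $L^0_\F(\Om;\gamma(L^2(\R_+;H),X))$, with $\zeta$ defined as $I\phi$. You attempt to use it in the reverse direction---to infer from the boundedness of $\zeta$ that $\phi$ lies in this space---and this is circular. Your proposed remedy, namely ``approximating $\phi\one_{[0,\tau_k]}$ by adapted elementary processes in a way compatible with the scalar identities,'' is not a technicality to be engineered around: it is precisely the content of the theorem. Nothing in the outline indicates how such an approximation would be produced, or why its $\gamma$-norms would be controlled, when the only input is the scalar integrability of each $\phi^* x^*$ together with the existence of the bounded process $\zeta$.

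The arguments in \cite{NVW1,OndrVer2013} do not proceed via \eqref{probineq2E}. The passage from the weak hypothesis to $\gamma$-radonifying integrability rests instead on the covariance characterisation of $\gamma$-radonifying operators (equivalently, of Gaussian Radon measures on $X$): in the deterministic case the random variable $\zeta_T$ is genuinely Gaussian with covariance operator $R_\phi R_\phi^*$, and this forces $R_\phi\in\gamma(L^2(0,T;H),X)$. The random case is then reduced to the deterministic one by decoupling, i.e.\ by replacing $W$ with an independent copy $\wt W$ and applying the deterministic criterion pathwise in $\omega$. Your outline contains neither of these ingredients.

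A secondary issue: you introduce stopping times $\tau_k$ based on the size of $\|\zeta\|$, but $\zeta$ is only assumed to have bounded paths, not c\`adl\`ag or continuous ones, so it is not clear that $\inf\{t:\|\zeta_t\|>k\}$ is an $\F$-stopping time. This can be worked around, but it needs to be addressed.
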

This theorem is contrasted by the following
example \cite[Theorem 2.1]{OndrVer2013}.

\begin{ex}
If $X$ is an infinite-dimensional Hilbert space, then there exists a
strongly measurable adapted process $\phi:(0,1)\times\Om\to X$ with the following properties:
\begin{enumerate}[\rm (i)]
 \item for all $x\in X$, the real-valued process $[\phi, x]$ belongs to $L^0(\Om;L^2(0,1))$ and we have
\begin{equation*}
\int_0^1 [ \phi, x] \, d W = 0, \ \text{almost surely};
\end{equation*}
 \item $\|\phi\|_{L^2(0,1;X)} = \infty$ almost surely.
\end{enumerate}
In particular, $\phi$ does not define an element of $L^0(\Om;L^2(0,1;X))$.
\end{ex}
Concerning the necessity of the UMD condition we have the following result due to  Garling \cite{Gar}.
Suppose that for a Banach space $X$ and an exponent $p\in (1, \infty)$
the estimates of Theorem \ref{thm:Itoisomorph} hold for all adapted
elementary processes $\phi: \R_+\times\Om\to X$ (we take $H = \R$):
\begin{equation}\label{eq:dec2est}
\frac1{c_p}\|\phi\|_{L^p(\Omega;\gamma^p(L^2(\R_+),X))}  \leq \Big\| \int_0^\infty \phi\,dB \Big\|_{L^p(\Omega;X)}\leq
C_p \|\phi\|_{L^p(\Omega;\gamma^p(L^2(\R_+),X))}.
\end{equation}
Then $X$ is a UMD space, with constant $\beta_{p,X} \leq c_p C_p$.
This result shows that the scope of
Theorem \ref{thm:Itoisomorph} is naturally restricted to the class of UMD spaces.

\begin{rem}\label{rem:UMD-}
In \cite{CoxGei, CoxVer2, Ga2}
the class of Banach spaces in which the right-hand side inequality of \eqref{eq:dec2est}
holds for all adapted elementary processes $\phi$ is investigated.
This class includes all UMD spaces, but also some non-UMD spaces such as the spaces $L^1(\mu)$.
By extrapolation techniques from \cite{Ge97}, (see \cite[Remark 3.2]{CoxVer2}) this implies that
for all $1\leq p\leq q<\infty$,
\[\Big\| \int_0^\cdot \phi\,dW \Big\|_{L^q(\Omega;C_{\rm b}(\R_+;X))} \leq
 C_{X,p} q \|\phi\|_{L^q(\Omega;\gamma^p(L^2(\R_+;H),X))}.\]
This shows that an estimate with linear dependence in $q$ holds.
\end{rem}

\begin{rem}
In the case when $X$ is a Hilbert space or $X = L^p(\mu)$ with $p\geq 1$, it is known that
\[\Big\|\int_0^\cdot \phi\,dW \Big\|_{L^p(\Omega;C_{\rm b}(\R_+;X))}
\leq  C_{p,X} \|\phi\|_{L^p(\Omega;\gamma^p(L^2(\R_+;H),X))}\]
holds with a constant $C_{p,X}\leq C_X$. In particular,
\[\Big\|\int_0^\cdot \phi\,dW \Big\|_{L^p(\Omega;C_{\rm b}(\R_+;X))}
\leq  C_{p,X} \|\phi\|_{L^p(\Omega;\gamma(L^2(\R_+;H),X))}\]
holds with a constant $C_{p,X}\leq C_X' \sqrt{p}$
(for Hilbert spaces this also follows from Seidler's result quoted earlier,
and for $L^p$ from Fubini's theorem).
It would be interesting to know whether this remains true for arbitrary (UMD) Banach spaces $X$.
This  problem is open even in the case $X = L^q$ with $q\in (1, \infty)\setminus\{2,p\}$.
\end{rem}

We continue with a version of It\^o's lemma taken from \cite{BNVW}.
Let $X,Y,Z$ be Banach spaces and let $(h_n)_{n\geq 1}$ be an orthonormal basis
of $H$. Let $R\in \g(H,X)$, $S\in \g(H,Y)$ and $T\in \calL(X,\calL(Y,Z))$ be
given. It is not hard to show that the sum
\begin{equation}\label{trdef}
{\rm tr}_{R,S} T:=\sum_{n\geq 1} (T R h_n)(S h_n)
\end{equation}
converges in $Z$ and does not depend on the choice of the orthonormal basis. Moreover,
\begin{equation}\label{trineq}
\|{\rm tr}_{R,S} T\|\leq \|T\| \|R\|_{\g(H,X)} \|S\|_{\g(H,Y)}.
\end{equation}
If $X=Y$ we shall write ${\rm tr}_R := {\rm tr}_{R,R}$.

\begin{prop}[It\^o lemma]
Let $X$ and $Y$ be UMD spaces. Assume that $f:\R_+\times X\to Y$ is
of class $C^{1,2}$ on every bounded interval. Let $\phi :\R_+\times \Omega \to \calL(H,X)$ be
$H$-strongly measurable and adapted and assume that $\phi$ locally
defines an element of $L^0(\Omega; \gamma(L^2(\R_+;H),X))\cap L^0(\O;L^2(\R_+;\g(H,X)))$.
 Let $\psi:\R_+\times\O\to X$ be strongly measurable and adapted with locally integrable paths
almost surely. Let $\xi:\O\to X$ be strongly
$\F_0$-measurable.  Define $\zeta:\R_+\times\O\to X$ by
$$
\zeta=\xi + \int_0^\cdot \psi_s \, ds+ \int_0^\cdot \phi_s \, dW_s.
$$
Then $s\mapsto D_2 f(s, \zeta_s) \phi_s$ is stochastically integrable and
almost surely we have, for all $t\ge 0$,
\begin{equation}
\begin{aligned} f(t, \zeta_t)-f(0, \zeta_0) = & \int_0^t D_1 f(s, \zeta_s) \, ds + \int_0^t D_2
f(s, \zeta_s) \psi_s\, ds
\\ & \qquad + \int_0^t D_2 f(s, \zeta_s) \phi_s\, dW_s
 + \frac{1}{2}\int_0^t {\rm tr}_{\phi_s} \big(D_2^2 f(s, \zeta_s) \big) \,ds.
\end{aligned}\label{eq:itoformula}
\end{equation}
\end{prop}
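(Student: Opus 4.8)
The plan is to prove \eqref{eq:itoformula} first for finite-rank, piecewise constant data, where it reduces to the classical finite-dimensional It\^o formula, and then to remove this restriction by approximation, the limit being controlled by the It\^o isomorphism of Theorem \ref{thm:Itoisomorph} and the trace estimate \eqref{trineq}. Before doing so I would check that each term on the right-hand side is meaningful. Since $\zeta$ has continuous paths, for almost every $\om$ the maps $s\mapsto D_1 f(s,\zeta_s)$, $s\mapsto D_2 f(s,\zeta_s)$ and $s\mapsto D_2^2 f(s,\zeta_s)$ are continuous on $[0,t]$, hence bounded there; the two drift integrals are then ordinary Bochner integrals, and the trace integral is finite because \eqref{trineq} gives $\n{\rm tr}_{\phi_s}(D_2^2 f(s,\zeta_s))\n\le \n D_2^2 f(s,\zeta_s)\n\,\n\phi_s\n_{\g(H,X)}^2$ together with the standing assumption that $\phi\in L^2(\R_+;\g(H,X))$ almost surely. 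For the stochastic integrability of $s\mapsto D_2 f(s,\zeta_s)\phi_s$ I would use that, by path-continuity of $\zeta$, the range $\{D_2 f(s,\zeta_s(\om)):s\in[0,t]\}$ is norm-compact in $\calL(X,Y)$ for a.e.\ $\om$, hence $\g$-bounded; the $\g$-multiplier theorem then yields $D_2 f(\cdot,\zeta)\phi\in L^0(\Om;\g(L^2(\R_+;H),Y))$, and since this process is $H$-strongly measurable and adapted, the characterisation of $L^0_{\F}$ recalled above places it in $L^0_{\F}(\Om;\g(L^2(\R_+;H),Y))$, so that it is stochastically integrable.

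Next I would localise by stopping times $\tau_j\uparrow\infty$ so that, up to $\tau_j$, the process $\zeta$ is bounded, $\psi$ has integrable paths, and $\phi\in L^p(\Om;\g(L^2(\R_+;H),X))\cap L^2(\Om;L^2(\R_+;\g(H,X)))$; as the asserted identity is local in $t$ it suffices to prove it on each stochastic interval $[0,\tau_j]$. I would then choose adapted elementary processes $\phi^{(k)}\to\phi$ in $L^p(\Om;\g(L^2(\R_+;H),X))\cap L^2(\Om;L^2(\R_+;\g(H,X)))$, elementary $\psi^{(k)}\to\psi$, and simple $\F_0$-measurable $\xi^{(k)}\to\xi$, and set $\zeta^{(k)}:=\xi^{(k)}+\int_0^\cdot\psi^{(k)}_s\,ds+\int_0^\cdot\phi^{(k)}_s\,dW_s$.

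For these approximating data the whole computation is finite-dimensional: on each atom of the underlying partition the path $s\mapsto\zeta^{(k)}_s(\om)$ evolves in a fixed finite-dimensional affine subspace $V\subseteq X$, and $W$ enters only through the finitely many independent scalar Brownian motions $s\mapsto W(\one_{(0,s]}\otimes h_j)$ generated by the orthonormal system carrying $\phi^{(k)}$. Restricting $f$ to $\R_+\times V$ and applying the classical It\^o formula for $Y$-valued functions of a finite-dimensional It\^o process (which itself follows from the scalar case by testing against $y\s\in Y\s$) gives \eqref{eq:itoformula} for $(\xi^{(k)},\psi^{(k)},\phi^{(k)})$; by the definition \eqref{trdef} the resulting second-order term is precisely $\tfrac12\sum_j D_2^2 f(s,\zeta^{(k)}_s)(\phi^{(k)}_s h_j,\phi^{(k)}_s h_j)=\tfrac12\,{\rm tr}_{\phi^{(k)}_s}(D_2^2 f(s,\zeta^{(k)}_s))$.

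It remains to let $k\to\infty$. By the maximal estimate \eqref{eq:UMD-BDG}, in the form \eqref{eq:L0-isom}, the stochastic integrals converge, so that $\zeta^{(k)}\to\zeta$ uniformly on $[0,t]$ in probability, and the uniform continuity of $f$ and its derivatives on the localised region transfers this to convergence of all integrands. The left-hand side and the two drift integrals then converge, and the trace integral converges by \eqref{trineq} together with $\phi^{(k)}\to\phi$ in $L^2(\Om;L^2(\R_+;\g(H,X)))$. The genuine obstacle is the \emph{stochastic-integral term}: one must show $D_2 f(\cdot,\zeta^{(k)})\phi^{(k)}\to D_2 f(\cdot,\zeta)\phi$ in $L^0_{\F}(\Om;\g(L^2(\R_+;H),Y))$ and then invoke Theorem \ref{thm:Itoisomorph}. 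I would split the difference as $[D_2 f(\cdot,\zeta^{(k)})-D_2 f(\cdot,\zeta)]\phi^{(k)}+D_2 f(\cdot,\zeta)[\phi^{(k)}-\phi]$ and estimate each summand in the $\g$-norm pathwise via the $\g$-multiplier theorem: the second summand is handled by $\phi^{(k)}\to\phi$ in $\g$ and the pathwise $\g$-boundedness of the range of $D_2 f(\cdot,\zeta)$, while the first requires a $\g$-valued dominated-convergence argument exploiting that $D_2 f(\cdot,\zeta^{(k)})-D_2 f(\cdot,\zeta)\to0$ uniformly in operator norm with uniformly $\g$-bounded ranges. Controlling this simultaneous variation of multiplier and integrand inside the $\g$-norm is the technical heart of the proof.
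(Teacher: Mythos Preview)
The paper itself does not give a proof of this proposition; it is quoted from \cite{BNVW}. Your overall strategy --- reduce to finite-dimensional data, apply the classical It\^o formula, and pass to the limit --- is indeed the one used there, and your accounting of the Bochner integrals and of the trace term via \eqref{trineq} is correct.

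There is, however, a genuine gap. You assert that for almost every $\omega$ the set $\{D_2 f(s,\zeta_s(\omega)):s\in[0,t]\}$ is norm-compact in $\calL(X,Y)$ and ``hence $\gamma$-bounded''. Norm-compactness does \emph{not} imply $\gamma$-boundedness (equivalently $R$-boundedness, since UMD spaces have finite cotype) in general; one needs extra structure such as an integrable derivative of the multiplier in operator norm, and here $s\mapsto D_2 f(s,\zeta_s)$ is merely continuous because $\zeta$ has no better than continuous paths. The same unproved $\gamma$-boundedness is invoked a second time in your limiting step, when you try to control $[D_2 f(\cdot,\zeta^{(k)})-D_2 f(\cdot,\zeta)]\phi^{(k)}$ in the $\gamma$-norm by a multiplier argument. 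Without it, neither the stochastic integrability of $D_2 f(\cdot,\zeta)\phi$ nor the convergence of the stochastic integrals follows from the Kalton--Weis $\gamma$-multiplier theorem.

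The route taken in \cite{BNVW} sidesteps this obstruction by using the weak characterisation of stochastic integrability (the antecedents of Theorem~\ref{thm:bddpaths} in \cite{NVW1}). One first proves the formula for \emph{scalar} $Y$: there the stochastic integrand $s\mapsto D_2 f(s,\zeta_s)\phi_s$ is $H$-valued, its $L^2(\R_+;H)$-norm is controlled by $\sup_s\|D_2 f(s,\zeta_s)\|\cdot\|\phi\|_{L^2(\R_+;\gamma(H,X))}$, and the approximation argument goes through with the ordinary It\^o isometry --- no $\gamma$-boundedness is needed. For general $Y$ one then sets
\[
\eta_t:=f(t,\zeta_t)-f(0,\zeta_0)-\int_0^t D_1 f(s,\zeta_s)\,ds-\int_0^t D_2 f(s,\zeta_s)\psi_s\,ds-\tfrac12\int_0^t{\rm tr}_{\phi_s}(D_2^2 f(s,\zeta_s))\,ds,
\]
notes that $\eta$ has continuous (hence locally bounded) paths, and applies the scalar case to $y^*\!\circ f$ to obtain $\langle\eta_t,y^*\rangle=\int_0^t (D_2 f(s,\zeta_s)\phi_s)^* y^*\,dW_s$ for every $y^*\in Y^*$. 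The weak characterisation then yields both the stochastic integrability of $D_2 f(\cdot,\zeta)\phi$ and the identity \eqref{eq:itoformula}. If you wish to salvage your direct approach, this is the point where you must replace the $\gamma$-multiplier step by the functional-testing argument.
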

The first two integrals and the last integral are almost surely defined as a
Bochner integral.

As a special case, let $X$ be a UMD space, let $X_1 = X$, $X_2 = X\s$, and set
$$(\zeta_i)_t =\xi_i + \int_0^t (\psi_i)_s \, ds+ \int_0^t (\phi_i)_s \, dW_s, \quad i=1,2,
$$
where $\phi_i: \R_+\times\O\to\calL(H,X_i)$,
$\psi_i: \R_+\times\O\to X_i$ and $\xi_i:\O\to X_i$
satisfy the assumptions of It\^o's lemma.
Then, almost surely, for all $t\ge 0$ we have
\begin{equation}\label{itodual}
\begin{aligned}
\lb (\zeta_1)_t, (\zeta_2)_t\rb - \lb (\zeta_1)_0, (\zeta_2)_0\rb = & \int_0^t \lb
(\zeta_1)_s, (\psi_2)_s\rb + \lb (\psi_1)_s, (\zeta_2)_s\rb \, ds
\\ & \qquad + \int_0^t \lb (\zeta_1)_s, (\phi_2)_s\rb + \lb (\phi_1)_s, (\zeta_2)_s\rb\, dW_s
\\ & \qquad + \int_0^t \sum_{n\geq 1} \lb (\phi_1)_s h_n, (\phi_2)_s h_n\rb \,ds.
\end{aligned}
\end{equation}

\section{Malliavin calculus}

The techniques of the previous section lend themselves very naturally to set
up a Malliavin calculus in UMD Banach spaces.

Let $\H$ be a Hilbert space and let  $W: \H\to L^2(\Om)$
be an isonormal Gaussian process (cf. Definition \ref{def:isonormal}).
The {\em Malliavin derivative}  of an $X$-valued smooth random variable
of the form
  $$F = f(Wh_1,\ldots,Wh_n)\otimes x$$ with
 $f\in C_{\rm b}^{\infty}(\R^n)$, $h_1,\ldots,h_n\in \H$ and $x\in X$,
 is the random variable $DF:\Om\to \g(\H,X)$ defined by
$$
 DF = \sum_{j=1}^n \partial_j f (Wh_1, \ldots, Wh_n)\otimes (h_j \otimes x).
$$
Here, $\partial_j$ denotes the $j$-th partial derivative.
The definition extends by linearity.
Thanks to the integration by parts formula
$$\E\lb DF(h), G\rb) = \E(Wh \lb F,G\rb) - \E \lb F, DG(h)\rb,$$
valid for smooth random variables $F$ and $G$ with values in $X$ and $X\s$, respectively,
the operator $D$ is closable as a densely defined linear operator
from $L^p(\Om;X)$ into $L^p(\Om;\gamma(\H,X))$, $1\le p<\infty$
(see \cite[Proposition 3.3]{MN08}).
The domain of its closure in $L^p(\Om;X)$ is denoted by
$\dD^{1,p}(\Om;X).$ This is a Banach space endowed with the norm
 \begin{align*}
 \|F\|_{\dD^{1,p}(\Om;X)} := ( \|F\|_{L^p(\Om;X)}^p
           + \|DF\|_{L^p(\Om;\g(\H,X))}^p  )^{1/p}.
 \end{align*}

Let $(H_m)_{m\ge 0}$ denote the
Hermite polynomials, given by $H_0(x) = 1$, $H_1(x) = x$, and the recurrence relation
$(m+1)H_{m+1}(x) = x H_m(x) - H_{m-1}(x)$. Let
$$\H_m = \overline{\rm{lin}} \{H_m(Wh): \ \n h\n=1\}, \quad m\ge 0.$$
The Wiener-It\^o decomposition theorem asserts that
$$ L^2(\Om,\cG) = \bigoplus_{m\ge 0} \H_m,$$
where $\cG$ is the $\sigma$-algebra generated by $W$.
Let $P$ be the Ornstein-Uhlenbeck semigroup on $L^2(\Om,\cG)$,
$$ P(t) := \sum_{m\ge 0} e^{-mt} J_m,$$
where $J_m$ is the orthogonal projection onto $\H_m$.
The semigroup $P\otimes I_X$ extends to a strongly continuous semigroup of contractions
on $L^2(\Om,\G;X)$. Its generator will be denoted by $L_X$.

The following result is due Pisier \cite{Pis88}.

 \begin{thm}[Meyer inequalities] \label{thm:meyergeneral}
Let $X$ be a UMD space and let $1 < p < \infty$.
Then $$\Dom_p((-L_X)^{1/2}) = \dD^{1,p}(\O;X)$$ and  for all $F\in\dD^{1,p}(\O;X)$ we have
an equivalence of the homogeneous norms
\begin{align*}
  \|D F\|_{L^p(\O;\gamma(\H,X))}
     &\eqsim_{p,X} \|(L\otimes I_X)^{1/2} F \|_{L^p(\O;X)}.
 \end{align*}
 \end{thm}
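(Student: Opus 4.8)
The plan is to prove the Meyer inequality by reducing the vector-valued statement to the scalar case via randomization, exploiting the $\gamma$-radonifying structure and the UMD property in essentially the same way that the decoupling inequalities of the previous section reduce stochastic integration to a path-by-path Gaussian computation. First I would set up the algebra of the Malliavin derivative on a chaos decomposition: for $X$-valued smooth random variables the operator $L_X = L\otimes I_X$ acts diagonally on the Wiener chaos as $-mI$ on $\H_m\otimes X$, and the key classical identity $D L^{-1/2}$, or rather the relation between $D$ and $(-L_X)^{1/2}$, can be packaged via the Ornstein--Uhlenbeck semigroup and its associated Riesz transform $R = D(-L_X)^{-1/2}$. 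The statement $\Dom_p((-L_X)^{1/2})=\dD^{1,p}(\O;X)$ together with the norm equivalence is then exactly the assertion that this Riesz transform and its inverse are bounded on $L^p(\O;X)$, so the whole theorem reduces to $L^p$-boundedness of the vector-valued Gaussian Riesz transform.

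The core step is to establish this Riesz transform boundedness. Here I would follow Pisier's original approach: represent the Riesz transform through the Ornstein--Uhlenbeck semigroup using the subordination/Mehler-kernel formula, so that $R$ becomes an average of the gradient of $P(t)$ against a probabilistic kernel. The crucial probabilistic input is that the Gaussian rotation underlying Mehler's formula allows one to write $DP(t)F$ in terms of an $\H$-valued conditional expectation of $F$ evaluated at a rotated Gaussian argument, which is precisely a martingale-transform-type object. At this point the UMD property enters exactly as in Theorem \ref{thm:decineq}: the randomized sum over the chaos decomposition, or equivalently the martingale built from the dyadic/time-discretized Mehler averaging, has unconditional differences in $L^p(\O;X)$ with constant controlled by $\beta_{p,X}$. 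The $\gamma(\H,X)$-norm on the derivative side is the natural home for this because, by the isomorphism $\gamma^p(\H,L^p(\mu;X))\simeq L^p(\mu;\gamma^p(\H;X))$ of \eqref{eq:gammaLpX} specialized to the Gaussian space, the Gaussian square-function norm of $DF$ is comparable to the $L^p(\O;\gamma(\H,X))$ norm, so the randomized UMD estimate directly yields the two-sided comparison with $\|(-L_X)^{1/2}F\|_{L^p(\O;X)}$.

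The hardest part will be making the Mehler-formula manipulation rigorous at the level of the derivative operator $D$ rather than just the semigroup $P$, i.e. justifying the commutation relation $DP(t) = e^{-t}P(t)D$ (the intertwining of $D$ with the Ornstein--Uhlenbeck semigroup) in the $\gamma(\H,X)$-valued setting and then inverting it to recover $(-L_X)^{1/2}$ from $D$. One must verify that the formal series manipulations on smooth cylindrical random variables extend by closability and density to all of $\dD^{1,p}(\O;X)$, using the integration-by-parts formula and the closability statement already recorded in the excerpt. Once the intertwining and the UMD-driven square-function estimate are in hand, the equality of domains follows by a standard approximation: smooth random variables are a core for both $(-L_X)^{1/2}$ and the closure of $D$, and the two homogeneous norms agree up to $\beta_{p,X}$-dependent constants on this common core, so the two domains coincide. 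I would present the forward inequality $\|DF\|\lesssim \|(-L_X)^{1/2}F\|$ and its reverse symmetrically, each obtained from the respective direction of the randomized UMD inequality \eqref{eq:UMDrandom}.
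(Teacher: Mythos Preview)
The paper does not give a proof of this theorem. It is stated with the attribution ``The following result is due Pisier \cite{Pis88}'' and no argument is supplied; the subsequent sentence merely remarks that higher-order extensions were obtained by Maas. So there is nothing in the paper to compare your proposal against.

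Your outline is broadly in the spirit of Pisier's original argument: reduce the Meyer inequality to $L^p$-boundedness of the Gaussian Riesz transform $D(-L_X)^{-1/2}$, represent this operator through the Mehler/rotation formula for the Ornstein--Uhlenbeck semigroup, and use the UMD property to control the resulting martingale-transform-like object. That is the right architecture. One point to sharpen: the UMD input in Pisier's proof is not the decoupling inequality of Theorem~\ref{thm:decineq} but rather a direct martingale-transform estimate (equivalently, boundedness of the vector-valued Hilbert transform), applied after the Mehler rotation turns the Riesz transform into a conditional-expectation operator against an independent Gaussian. Your invocation of \eqref{eq:gammaLpX} to identify the $\gamma(\H,X)$-norm with a Gaussian square function is correct and is exactly why the $\gamma$-radonifying norm is the right receptacle for $DF$. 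The commutation $DP(t)=e^{-t}P(t)D$ and the density/core arguments you flag are indeed routine once the estimate on smooth cylindrical functions is in hand.
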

An extension to higher order derivatives was obtained by Maas \cite{Maa10}.
We refer the reader to this paper for more on history of vector-valued Malliavin calculus.

From now on we assume that $X$ is
a UMD space. Since UMD spaces are $K$-convex, trace duality establishes a canonical isomorphism
$$\gamma(\H,X\s) \eqsim (\gamma(\H,X))\s.$$
See \cite{KaWe, PisConv} for a proof. We apply this with $X$ replaced by $X\s$
and note that $X$, being a UMD space, is $K$-convex.
Starting from the Malliavin derivative $D$ on $L^{p'}(\Om;X\s)$ with $1<p<\infty$ and $\frac1p+\frac1{p'}=1$,
we define the {\em Skorohod integral} $\delta$ as the adjoint of $D$; thus, $\delta$
is a densely defined closed linear operator from $L^p(\Om;\gamma(\H,X)$ into $L^p(\Om;X)$, $1<p<\infty$.
The domain of its closure will be denoted by $\Dom_p(\delta)$.

So far, $\H$ has been an arbitrary Hilbert space. We now specialise to $\H = L^2(\R_+;H)$
and let $(\F_t)_{t\ge 0}$ be the filtration induced by $W$ (see Section \ref{subsec:random}).
The following result has been proved in \cite{MN08}:

\begin{thm} \label{thm:skorohodintegral}
Let $X$ be a UMD space and let $1<p<\infty$ be given. The space
$L^p_{{{\F}}}(\Om;\g(L^2(\R_+;H),X))$ is contained  in
$\Dom_p(\d)$ and
$$\d(\phi) = \int_0^\infty \phi\,dW, \quad
 \phi \in L^p_{{{\F}}}(\Om;\g(L^2(\R_+;H),X)).$$
\end{thm}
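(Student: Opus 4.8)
The plan is to show that the Skorohod integral $\delta$, defined as the adjoint of the Malliavin derivative $D$ on $L^{p'}(\Omega;X^*)$, coincides on adapted integrands with the It\^o integral constructed in Theorem~\ref{thm:Itoisomorph}. Since both $\delta$ and the It\^o integral $I$ are defined by extension/closure arguments, and since the adapted elementary processes are dense in $L^p_{\F}(\Om;\gamma(L^2(\R_+;H),X))$, it suffices to verify the identity $\delta(\phi)=\int_0^\infty \phi\,dW$ for adapted elementary $\phi$, and then pass to the limit. To do this I would first check that an adapted elementary process $\phi$ lies in $\Dom_p(\delta)$, which amounts to verifying the defining duality inequality, namely that $G\mapsto \E\lb \int_0^\infty\phi\,dW,\,G\rb$ is bounded on a dense class of smooth $X^*$-valued random variables $G$ in the $L^{p'}(\Om;X^*)$-norm.

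The core computation is the duality pairing itself. I would represent $\phi$ in the standard form \eqref{eq:phi}, so that $\int_0^\infty\phi\,dW=\sum_{n,j} v_{jn}\,W(\one_{(t_{n-1},t_n]}\otimes h_j)$ with $v_{jn}$ being $\F_{t_{n-1}}$-measurable in the sense used in Theorem~\ref{thm:Itoisomorph}. I would then test against a smooth $X^*$-valued $G$ and apply the scalar integration by parts formula $\E(Wh\,\lb F,G\rb)=\E\lb DF(h),G\rb+\E\lb F,DG(h)\rb$ componentwise. Because each multiplier $v_{jn}$ is adapted and the Gaussian increment $W(\one_{(t_{n-1},t_n]}\otimes h_j)$ is independent of $\F_{t_{n-1}}$, the Malliavin derivative of $v_{jn}$ only contributes ``past'' directions $h$ supported in $[0,t_{n-1}]$, which are orthogonal in $L^2(\R_+;H)$ to the increment direction $\one_{(t_{n-1},t_n]}\otimes h_j$. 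This orthogonality is exactly what kills the extra term that distinguishes the Skorohod integral from the It\^o integral (the trace/divergence correction), leaving precisely $\E\lb\int_0^\infty\phi\,dW,\,G\rb=\E\lb\phi,\,DG\rb_{\gamma(\H,X)}$, which is the assertion that $\int_0^\infty\phi\,dW=\delta(\phi)$ on elementary processes.

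Having established the identity on adapted elementary processes, I would close the argument by a density/closedness step. The It\^o isomorphism of Theorem~\ref{thm:Itoisomorph} guarantees that $\phi\mapsto\int_0^\infty\phi\,dW$ is bounded from $L^p_{\F}(\Om;\gamma(L^2(\R_+;H),X))$ into $L^p(\Om;X)$, and $\delta$ is closed by construction; since the two agree on a dense subspace and both are continuous (respectively closed) there, they agree on all of $L^p_{\F}(\Om;\gamma(L^2(\R_+;H),X))$, which simultaneously shows the containment $L^p_{\F}(\Om;\gamma(L^2(\R_+;H),X))\subseteq\Dom_p(\delta)$.

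The main obstacle I anticipate is the adaptedness-versus-orthogonality bookkeeping in the integration by parts step: one must argue carefully that the Malliavin derivative of each $\F_{t_{n-1}}$-measurable coefficient produces a $\gamma(\H,X)$-element supported on the past $[0,t_{n-1}]$, so that its pairing against the forward increment direction vanishes. In the scalar setting this is the familiar statement that the Skorohod integral reduces to the It\^o integral on adapted integrands, but in the vector-valued UMD framework one must ensure the $K$-convexity/trace-duality identification $\gamma(\H,X^*)\eqsim(\gamma(\H,X))^*$ is used consistently so that the pairings are genuinely the duality pairings defining $\delta$. Once the support/orthogonality claim is justified rigorously, the rest is routine.
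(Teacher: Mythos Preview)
The paper does not actually prove this theorem; it states the result and cites \cite{MN08} for the proof, so there is no in-paper argument to compare against.

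Your strategy---verify the duality identity $\E\langle I(\phi),G\rangle = \E\langle\phi,DG\rangle$ on a dense adapted class via Gaussian integration by parts, then extend using the closedness of $\delta$ together with the boundedness of $I$ from Theorem~\ref{thm:Itoisomorph}---is the standard route and is correct. The one point that needs more care than your sketch provides is the treatment of the coefficients $v_{jn}=\sum_m\one_{F_{mn}}\otimes x_{jmn}$: indicator functions $\one_{F}$ are in general \emph{not} Malliavin differentiable, so you cannot literally apply the integration-by-parts identity with $F=v_{jn}$ and then assert that ``$Dv_{jn}$ is supported in $[0,t_{n-1}]$''. The cleanest fix is to replace the elementary adapted processes of \eqref{eq:phi} by the (still dense in $L^p_\F(\Om;\gamma(L^2(\R_+;H),X))$) class in which each $\F_{t_{n-1}}$-measurable coefficient is a smooth cylindrical functional $\psi(Wf_1,\dots,Wf_k)$ with $f_i\in L^2(0,t_{n-1};H)$; for these, $D\psi$ lies in the span of the $f_i$ and is therefore genuinely orthogonal to $\one_{(t_{n-1},t_n]}\otimes h_j$, and your computation goes through verbatim. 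Equivalently, keep the indicator coefficients and approximate each $\one_F$ in $L^r(\Omega)$ (any $r<\infty$) by such smooth past-measurable functionals, apply integration by parts to the approximants, and pass to the limit. With this adjustment the argument is complete.
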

Let $\F^W$ denote the filtration generated by $W$ and define
step functions $f: \R_+\to \gamma(H,L^p(\Om;X))$ with bounded support,
$$
 (P_{\F^W} f)(t) :=\E(f(t) | \F_t^W),
$$
where $\E(\cdot|\F_t^W)$ is considered as a bounded operator acting
on $\gamma(H,L^p(\Om;X))$.
It is shown in \cite{MN08} that if  $1 < p,q < \infty$ satisfy
$\frac1p+\frac1q = 1$, then the mapping
$ P_{\F^W}$ extends to a bounded operator on $\gamma(L^2(\R_+;H),L^p(\Om;X))$.
Moreover, as a bounded operator on $L^p(\Om;\g(L^2(\R_+;H),X))$,
$P_{\F^W}$ is a projection onto the closed subspace
$L_{\F^W}^p(\Om;\gamma(L^2(\R_+;H),X)).$

\begin{thm}[Clark-Ocone representation, \cite{MN08}]\label{thm:ClarkOcone}
Let $X$ be a UMD space.
The operator
$P_{\F^W}\circ D$ has a unique extension to a continuous
operator from
$L^1(\Om,\F_\infty^W;X)$ to $L_{\F^W}^0(\Om;\gamma(L^2(\R_+;H),X))$, and for all
$F\in L^1(\Om,\F_\infty^W;X)$ we have the representation
 \begin{align*}
 F = \E(F) + I ((P_{\F^W} \circ D) F),
 \end{align*}
where $I$ is the stochastic integral with respect to $W$.
Moreover,  $(P_{\F^W} \circ D) F$ is the unique element $\phi\in
L_{\F^W}^0(\Om;\g(L^2(\R_+;H),X))$ satisfying $F = \E(F) + I(\phi)$.
 \end{thm}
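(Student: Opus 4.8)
The plan is to prove the representation first on a dense class of sufficiently regular $F$, where the Malliavin derivative is genuinely available, and then to propagate it to all of $L^1(\Om,\F_\infty^W;X)$ by a continuity argument built on the weak-type estimates \eqref{probineq1E}--\eqref{probineq2E}. Throughout I fix an auxiliary exponent $p\in(1,\infty)$ and work on the dense subspace $\mathcal S$ of $L^1(\Om,\F_\infty^W;X)$ consisting of finite sums of smooth cylindrical variables $f(Wh_1,\dots,Wh_n)\otimes x$ with $h_i\in L^2(\R_+;H)$; these lie in $\dD^{1,p}(\Om;X)$ and are $\F_\infty^W$-measurable. The uniqueness clause can be disposed of first and for free: since $I$ is an injective continuous embedding by \eqref{eq:L0-isom}, any two $\phi$ with $F=\E(F)+I(\phi)$ must coincide in $L^0_{\F^W}(\Om;\g(L^2(\R_+;H),X))$.

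For $F\in\mathcal S$ the core identity to establish is
\[F-\E(F)=I\bigl((P_{\F^W}\circ D)F\bigr).\]
First I would invoke the martingale representation theorem, i.e. the fact recorded after \eqref{eq:I-isom} that $I$ maps $L^p_{\F^W}(\Om;\g(L^2(\R_+;H),X))$ \emph{onto} $L^p(\Om,\F_\infty^W;X)$, to obtain a unique adapted $\psi$ with $F-\E(F)=I(\psi)$. It then remains to identify $\psi=(P_{\F^W}\circ D)F$, which I would do by testing against an arbitrary adapted $\theta\in L^{p'}_{\F^W}(\Om;\g(L^2(\R_+;H),X\s))$ through the chain
\[\E\lb\psi,\theta\rb=\E\lb I\psi,I\theta\rb=\E\lb F,\delta\theta\rb=\E\lb DF,\theta\rb=\E\lb(P_{\F^W}\circ D)F,\theta\rb.\]
Here the first equality is the trace-pairing form of the duality It\^o formula \eqref{itodual}, whose quadratic-covariation term is precisely the trace duality of $\g(L^2(\R_+;H),X)$ with $\g(L^2(\R_+;H),X\s)$; the second uses $\E(I\theta)=0$ together with Theorem \ref{thm:skorohodintegral}, which gives $I\theta=\delta\theta$ on adapted integrands; the third is the definition of $\delta$ as the adjoint of $D$; and the last uses that $\theta$ is adapted while $P_{\F^W}$, being assembled from the self-adjoint conditional expectations $\E(\cdot\,|\,\F_t^W)$, is self-dual for the trace pairing and fixes $\theta$. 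Since the pairing of $L^p_{\F^W}$ with $L^{p'}_{\F^W}$ is non-degenerate (via the trace duality $(\g(\H,X))\s\eqsim\g(\H,X\s)$, valid because $X$ is $K$-convex), this forces $\psi=(P_{\F^W}\circ D)F$ and proves the identity on $\mathcal S$.

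The main work, and the step I expect to be the real obstacle, is the passage from $\mathcal S$ to all of $L^1(\Om,\F_\infty^W;X)$: since $D$ is unbounded and has no reasonable $L^1$-theory, one cannot extend $D$ itself, only the composite $P_{\F^W}\circ D$, and this has to be carried out through the representation identity rather than directly. Given $F\in L^1(\Om,\F_\infty^W;X)$ I would choose $F_n\in\mathcal S$ with $F_n\to F$ in $L^1$ and set $\phi_n:=(P_{\F^W}\circ D)F_n$, so that $I(\phi_n)$ is the continuous version of the martingale $t\mapsto\E(F_n\,|\,\F_t^W)-\E(F_n)$. Applying \eqref{probineq2E} to $\phi_n-\phi_m$, with associated process $\zeta=I(\phi_n-\phi_m)$ the martingale attached to $F_n-F_m$, and bounding $\P(\|\zeta\|_{C_{\rm b}(\R_+;X)}>\lambda)$ by Doob's weak-$(1,1)$ maximal inequality in terms of $\|F_n-F_m\|_{L^1(\Om;X)}$, I would show that $(\phi_n)$ is Cauchy in $L^0_{\F^W}(\Om;\g(L^2(\R_+;H),X))$; its limit defines the extension $(P_{\F^W}\circ D)F$ and yields the asserted $L^1\to L^0$ continuity.

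Finally, by continuity of $I$ on $L^0$ from \eqref{eq:L0-isom} we have $I(\phi_n)\to I(\phi)$ in $L^0(\Om;C_{\rm b}(\R_+;X))$, while the terminal values $F_n-\E(F_n)\to F-\E(F)$ in $L^1$; passing to the limit in $F_n-\E(F_n)=I(\phi_n)$ gives $F=\E(F)+I(\phi)$ for every $F\in L^1(\Om,\F_\infty^W;X)$, and the uniqueness established at the outset shows that $\phi=(P_{\F^W}\circ D)F$ is the only representing process. This completes the proof.
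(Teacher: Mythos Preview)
The paper does not actually prove this theorem: it is stated with a citation to \cite{MN08} and no argument is given, so there is no ``paper's own proof'' to compare against. That said, your outline is essentially the argument one finds in \cite{MN08}: establish the identity on smooth cylindrical variables via martingale representation and the duality between $D$ and $\delta$, then extend to $L^1$ by combining Doob's weak-$(1,1)$ inequality with the probabilistic estimate \eqref{probineq2E} to show the map $F\mapsto (P_{\F^W}\circ D)F$ is $L^1\to L^0$ continuous. The duality chain is correct, including the use of \eqref{itodual} to obtain $\E\lb I\psi,I\theta\rb=\E\lb\psi,\theta\rb_{\rm tr}$ and the self-adjointness of $P_{\F^W}$; the non-degeneracy step is justified since $P_{\F^W}$ is a bounded projection onto the adapted subspace, so testing against adapted $\theta$ suffices to determine an adapted element.

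One small point worth tightening: in the extension step you should make explicit that for $F_n\in\mathcal S$ the stochastic integral process $t\mapsto I(\phi_n)(t)$ coincides with the c\`adl\`ag (in fact continuous) martingale $t\mapsto \E(F_n\,|\,\F_t^W)-\E(F_n)$, so that $\sup_t\|I(\phi_n-\phi_m)(t)\|$ is exactly the quantity controlled by Doob's maximal inequality applied to $F_n-F_m$. With that identification in hand, \eqref{probineq2E} with $\delta=\eps$ immediately gives the Cauchy property of $(\phi_n)$ in $L^0$, and the rest follows as you wrote.
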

 The UMD Malliavin calculus has been
pushed further in the recent paper \cite{ProVer},
where in particular the authors obtained an It\^o formula for the Skorohod integral.

\section{Stochastic maximal $L^p$-regularity}

Applications of the theory of stochastic integration in UMD spaces have been worked
out in a number of papers; see
\cite{BNVW, BrzVer11, CKLL, CoxGor, CoxNee12, Cre, DesLon, KunzeMart, Kunze12, KuNe12, NVW3,
NVW12eq,  NVW10, ProVer, SchnVer10}
and the references therein. Here we will limit ourselves to the maximal regularity theorem for stochastic convolutions
from \cite{NVW10} which is obtained by combining Theorem \ref{thm:main} and \ref{thm:J-Lq} below,
and which
crucially depends on the sharp two-sided inequality of Theorem \ref{thm:Itoisomorph}.

As before we let $(\Omega,\P)$ be a probability space,
let $W$ be an $H$-cylindrical Brownian motion defined on it,
and let the filtration $\F$ be as before.
For an operator $A$ admitting a bounded $H^\infty$-calculus,
we denote by $(S(t))_{t\ge 0}$ the bounded analytic semigroup generated by $-A$.
For detailed treatments of the $H^\infty$-calculus we refer to \cite{DHP, Haase:2, KuWe}.

The main result of \cite{NVW10} is formulated for $L^q$-spaces with $q\in [2,\infty)$,
but inspection of the proof shows that can be restated for UMD spaces satisfying
a certain hypothesis which will be explained in detail below.

\begin{thm}\label{thm:main}
Let $p\in [2, \infty)$ and let $X$ be a UMD Banach space with type $2$
which satisfies Hypothesis $(H_p)$.
Suppose the operator $A$ admits a
bounded $H^\infty$-calculus of angle less than $\pi/2$ on $X$ and let
$(S(t))_{t\ge 0}$ denote the bounded analytic semigroup
on $X$ generated by $-A$.
For all $G\in L_\F^p(\R_+\times\Omega;\g(H,X))$
the stochastic convolution process
\begin{equation}\label{eq:USGdW}
U(t) = \int_0^t S(t-s)G_s\,dW_s, \quad t\ge 0,
\end{equation}
is well defined in $X$, takes values in the fractional domain
$\Dom(A^{1/2})$ almost surely, and we have the stochastic maximal $L^p$-regularity
estimate
\begin{equation}\label{eq:Apq}
\E \n A^{1/2} U\n_{L^p(\R_+;X)}^p \le C^p \E\n
G\n_{L^p(\R_+;\g(H,X))}^p
\end{equation}
with a constant $C$ independent of $G$. If, in addition to the above assumptions, we have
$0\in\varrho(A)$, then \begin{align}\label{eq:trace}
\E \|U\|_{BUC(\R_+;
(L^q(\mathcal{O}),\Dom(A))_{\frac12-\frac1p,p})}^p \leq
C^p \, \E\|G\|_{L^p(\R_+;L^q(\mathcal{O};H))}^p.
\end{align}
\end{thm}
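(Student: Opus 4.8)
The plan is to reduce the stochastic estimate \eqref{eq:Apq} to a purely deterministic bound by means of the two-sided It\^o isomorphism of Theorem \ref{thm:Itoisomorph}, and only afterwards to extract the regularity and trace statements. First I would prove \eqref{eq:Apq} for a dense class of adapted elementary processes $G$, for which $s\mapsto S(t-s)G_s$ takes values in $\Dom(A^{1/2})$ for $t>s$ and all operations below are rigorously justified; the general case then follows by density together with the estimate itself.

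For the main estimate, fix such a $G$ and set $\Phi(t,s) := \one_{(0,t)}(s)\, A^{1/2}S(t-s)G_s$, so that, using that $A^{1/2}$ is closed and commutes with the stochastic integral on elementary integrands, $A^{1/2}U(t) = \int_0^\infty \Phi(t,\cdot)\,dW$. For each fixed $t$ the process $s\mapsto \Phi(t,s)$ is adapted and elementary, so Theorem \ref{thm:Itoisomorph} gives $\E\|A^{1/2}U(t)\|^p \eqsim_{p,X} \E\|\Phi(t,\cdot)\|_{\gamma(L^2(\R_+;H),X)}^p$. Integrating in $t$ and applying Fubini's theorem, the target \eqref{eq:Apq} becomes equivalent to the pathwise deterministic bound
\[
\int_0^\infty \Big\| s\mapsto \one_{(0,t)}(s)\, A^{1/2}S(t-s)g(s)\Big\|_{\gamma(L^2(\R_+;H),X)}^p\,dt \;\le\; C^p \int_0^\infty \|g(t)\|_{\gamma(H,X)}^p\,dt ,
\]
applied with $g=G(\cdot,\omega)$ and then averaged over $\Omega$. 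Thus everything reduces to the boundedness of the deterministic convolution operator $g\mapsto\big(t\mapsto \one_{(0,t)}(\cdot)\,A^{1/2}S(t-\cdot)g\big)$ from $L^p(\R_+;\gamma(H,X))$ into $L^p(\R_+;\gamma(L^2(\R_+;H),X))$, which is precisely the content of Hypothesis $(H_p)$.

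I expect this deterministic estimate to be the main obstacle. The naive approach fails: applying the type $2$ embedding $L^2(\R_+;\gamma(H,X))\embed\gamma(L^2(\R_+;H),X)$ of norm $\tau_{2,X}$ together with the analytic bound $\|A^{1/2}S(t-s)\|_{\calL(X)}\lesssim (t-s)^{-1/2}$ produces the kernel $(t-s)^{-1}$, which is not integrable, so the mere operator norm of the semigroup is too crude and the oscillation of $S$ must be exploited. This is exactly where the bounded $H^\infty$-calculus of angle $<\pi/2$ enters: it permits one to realise the operator through the functional calculus and to invoke the $\gamma$-boundedness of the associated operator family, in the spirit of the Kalton--Weis multiplier theorem, which is the mechanism by which Hypothesis $(H_p)$ is verified for $X=L^q(\mathcal{O})$ in Theorem \ref{thm:J-Lq}. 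Granting $(H_p)$, the estimate \eqref{eq:Apq} follows for elementary $G$ and extends by density to all $G\in L^p_\F(\R_+\times\Omega;\gamma(H,X))$. The same chain of inequalities shows that $V(t):=\int_0^t A^{1/2}S(t-s)G_s\,dW_s$ is well defined; since $A^{1/2}$ is closed and $U(t)=\int_0^t S(t-s)G_s\,dW_s$, this yields that $U(t)$ is well defined, lies in $\Dom(A^{1/2})$ almost surely, and satisfies $A^{1/2}U(t)=V(t)$.

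Finally, for the trace estimate \eqref{eq:trace} I would combine \eqref{eq:Apq} with the increment identity $U(t)-S(t-s)U(s)=\int_s^t S(t-r)G_r\,dW_r$, valid by the semigroup law. When $0\in\varrho(A)$ the homogeneous and inhomogeneous norms are equivalent and $A^{1/2}$ is boundedly invertible, so \eqref{eq:Apq} places $U$ in the stochastic maximal regularity class with the correct scaling. Applying the It\^o isomorphism to the increment controls $\E\|U(t)-S(t-s)U(s)\|^p$ by the $\gamma$-norm of $G$ on $(s,t)$, which delivers the time-continuity and identifies the trace space as the real interpolation space $(X,\Dom(A))_{\frac12-\frac1p,p}$, the shift by $\tfrac12$ reflecting the Brownian scaling of the It\^o integral. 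Specialising to $X=L^q(\mathcal{O})$ and using \eqref{eq:gammaLp} to rewrite $\gamma(H,L^q(\mathcal{O}))$ as $L^q(\mathcal{O};H)$ converts the right-hand side into the stated mixed norm, giving \eqref{eq:trace}.
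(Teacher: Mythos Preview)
Your reduction via the It\^o isomorphism to a pathwise deterministic estimate is legitimate, but the crucial step is misidentified. The deterministic bound you need,
\[
\int_0^\infty \big\| s\mapsto \one_{(0,t)}(s)\, A^{1/2}S(t-s)g(s)\big\|_{\gamma(L^2(\R_+;H),X)}^p\,dt \;\le\; C^p \int_0^\infty \|g(t)\|_{\gamma(H,X)}^p\,dt,
\]
is \emph{not} the content of Hypothesis $(H_p)$. Hypothesis $(H_p)$ asserts the $R$-boundedness of the family $\{I(k):k\in\mathcal{K}\}$ of \emph{scalar-kernel} stochastic convolution operators $(I(k)G)_t=\int_0^t k(t-s)G_s\,dW_s$ from $L^p_\F(\R_+\times\Omega;\gamma(H,X))$ to $L^p(\R_+\times\Omega;X)$. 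Your target estimate involves the operator-valued kernel $A^{1/2}S(t-s)$, and $(H_p)$ by itself says nothing about this; nor does $(H_p)$ depend on $A$ at all. You also have the roles reversed when you suggest that $(H_p)$ is verified via the Kalton--Weis mechanism: $(H_p)$ is a hypothesis of the theorem, not a conclusion.

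What is actually needed---and what the paper does---is to combine $(H_p)$ with the $H^\infty$-calculus of $A$ through the Kalton--Weis operator-valued functional calculus. Concretely, for $\lambda\in\Sigma_\nu$ with $\nu<\tfrac\pi2$ set $k_\lambda(t)=\lambda^{1/2}e^{-\lambda t}$; one checks that $k_\lambda$ lies in $\mathcal{K}$ up to a constant depending only on $\nu$, so $(H_p)$ yields the $R$-boundedness of the family $L_\lambda:=I(k_\lambda)$ on the space $\wt X:=L^p_\F(\R_+\times\Omega;\gamma(H,X))$. Since $\wt A$ inherits a bounded $H^\infty(\Sigma_\sigma)$-calculus on $\wt X$ and the $L_\lambda$ commute with its resolvents, the Kalton--Weis $RH^\infty$-calculus gives that $f(\wt A)$ is bounded for $f(\lambda)=L_\lambda$. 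A stochastic Fubini argument then identifies $(f(\wt A)G)_t$ with $\int_0^t A^{1/2}S(t-s)G_s\,dW_s$, which is \eqref{eq:Apq}. Your sketch contains the right keywords (``functional calculus'', ``Kalton--Weis'') but does not assemble them into this argument; in particular, the essential point that $(H_p)$ supplies the $R$-bound required by the operator-valued calculus, while the $H^\infty$-calculus of $A$ supplies the underlying scalar calculus into which the $R$-bounded family is inserted, is missing.
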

In the special case of $X = L^q(\mathcal{O})$, where $(\mathcal{O},\mu)$ is
a $\sigma$-finite measure space and $q\in [2, \infty)$,
Hypothesis $(H_p)$ is fulfilled for all $p\in (2, \infty)$; the value $p=2$ is allowed if $q=2$
(see Theorem \ref{thm:J-Lq} below).
In this special case, \eqref{eq:Apq} is equivalent to the estimate
\begin{equation}\label{eq:Apq2}
\E \n A^{1/2} U\n_{L^p(\R_+;L^q(\mathcal{O}))}^p \le C^p \E\n
G\n_{L^p(\R_+;L^q(\mathcal{O};H))}^p.
\end{equation}

The convolution process $U$ defined by \eqref{eq:USGdW}
is the mild solution of the abstract SPDE
$$ dU(t) + AU(t)\,dt = G_t\,dW_t, \quad t\ge 0,$$
and therefore Theorem \ref{thm:main}
can be interpreted as a maximal $L^p$-regularity result for such equations.
As is well-known \cite{Brz1, DPZ, Kry}, stochastic maximal regularity estimates can be combined
with fixed point arguments to
obtain existence, uniqueness and regularity results for solutions to
more general classes of nonlinear stochastic PDEs. For the setting considered here
this has been worked out in detail in \cite{NVW12eq}, where an application
is included for Navier-Stokes equation with multiplicative gradient-type noise.

Theorem \ref{thm:main} generalises previous results due to Krylov \cite{Kry94, Kry, Kry00, Kry06}
who proved the estimate for second-order uniformly elliptic operators on $X = L^q(\R^d)$
with $2\leq q\leq p$,
where $D=\R^d$ or $D$ is a smooth enough bounded domain in $\R^d$.
Using PDE arguments, Krylov was able to prove his result for operators with coefficients
which may be both time-dependent and random in an adapted and measurable way. These results
were extended to half-spaces and bounded domains by Kim \cite{Kim}.

The proof of Theorem \ref{thm:main} for $X = L^q(\mathcal{O})$ in \cite{NVW10} consists of three main steps:
\begin{enumerate}[\rm(i)]
 \item The $H^\infty$-calculus of
$A$ is used to obtain a reduction to an estimate
for stochastic convolutions of scalar-valued kernels;
 \item This estimate is then proved using Hypothesis $(H_p)$.
 \item Hypothesis $(H_p)$ is verified for $X = L^q(\mathcal{O})$.
\end{enumerate}
In this section we shall present a proof of Theorem \ref{thm:main} which replaces (i) and (ii)
by a simpler $H^\infty$-functional calculus argument.

Let us first turn to the precise formulation of Hypothesis $(H_p$). Let $\mathcal{K}$
be the set of all absolutely continuous functions $k:\R_+\to \R$ such
that $\lim_{t\to \infty} k(t) = 0$ and $$\int_0^\infty t^{1/2} | k'(t)| \, dt \leq 1.$$
Fix
$p\in [2,\infty)$ and let $X$ be an arbitrary Banach space.
For $k\in \mathcal{K}$ and adapted elementary processes
$G:\R_+\times\O\to \calL(H,X)$ we define the process
$I(k) G:\R_+\times\O\to X$ by
\begin{align}\label{eq:Ikoperator}
(I(k) G)_t := \int_0^t k(t-s) G_s \, dW_s, \ \ t\ge 0.
\end{align}
Since $G$ is an adapted elementary process, the It\^o isometry for
scalar-valued processes shows that these stochastic integrals are well-defined
for all $t\ge 0$; no condition on $X$ is needed for this. If $X$ has martingale type $2$
(in particular, when $X$ is UMD with type $2$),
then by Theorem \ref{thm:Mtype2-stochint} and Young's inequality it is easy see
that $I(k)$ extends to a bounded operator from $
L^{p}_\mathscr{F}(\R_+\times\O;\g(H,X))$ into
$L^{p}(\R_+\times\O;X)$ and that the family
\[\mathcal{I} := \{I(k): k\in \mathcal{K}\}\]
is uniformly bounded.
We will need that this family has the stronger property of being $R$-bounded.

A family $\mathscr{T}$ of bounded linear operators from a Banach space $X_1$ into
another Banach space $X_2$
is called {\em $R$-bounded} if there exists a constant $C\ge 0$ such
that for all finite sequences $(x_n)_{n=1}^N$ in $X_1$ and
$(T_n)_{n=1}^N$ in ${\mathscr {T}}$ we have
\[ \E \Big\n \sum_{n=1}^N r_n T_n x_n\Big\n^2
\le C^2\E \Big\n \sum_{n=1}^N r_n x_n\Big\n^2.
\]
Every $R$-bounded family is uniformly bounded; the converse
holds if (and only if, see \cite{AreBu02}) $X$ has cotype $2$ and $Y$ has type $2$.
In particular, the
converse holds if $X_1$ and $X_2$ are Hilbert spaces. The notion of $R$-boundedness
has been first studied systematically in
\cite{CPSW}; for further results and historical remarks see \cite{DHP, KuWe}.

Now we are ready to formulate Hypothesis $(H_p)$:

\begin{enumerate}
\item[$(H_p)$] Each of the operators $I(k)$, $k\in \mathcal{K}\}$, extends to a bounded operator
from $L^{p}_\mathscr{F}(\R_+\times\O;\g(H,X))$ into $L^{p}(\R_+\times\O;X)$, and
the family $$\mathcal{I} = \{I(k): k\in \mathcal{K}\}$$ is $R$-bounded
from $L^{p}_\mathscr{F}(\R_+\times\O;\g(H,X))$ into $L^{p}(\R_+\times\O;X)$.
\end{enumerate}
One can show that if the operators $I(k)$ extend to a uniformly bounded family of bounded operators
from $L^{p}_\mathscr{F}(\R_+\times\O;\g(H,X))$ into $L^{p}(\R_+\times\O;X)$,
then $p\geq 2$ and $X$ has type $2$ (see \cite{NVW10b}).
If $X$ satisfies $(H_p)$ and $Y$ is isomorphic to a
closed subspace of $X$, then $Y$ satisfies $(H_p)$ as well.

Hypothesis $(H_p)$ admits various equivalent formulations. We present one of them, implicit in \cite{NVW10};
for a systematic study we refer the reader to
\cite{NVW10b}. Let $B$ be a real-valued Brownian motion.
\begin{prop}
Hypothesis $(H_p)$ holds if and only if the family $\{I_t: t>0\}$ of stochastic convolution operators defined by
$$I_t g(s):= \int_0^t \frac{1}{\sqrt{t}}\one_{(0,t)}(s-r)g(r)\,dB_r, \quad s\ge 0,
$$
is well defined and $R$-bounded from
$L^{p}(\R_+;X)$ into $L^{p}(\R_+\times\O;X)$.
\end{prop}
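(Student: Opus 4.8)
The plan is to establish the equivalence through two successive reductions: first that Hypothesis $(H_p)$ is equivalent to the $R$-boundedness of the elementary indicator convolutions, and then that the latter, formulated with the $H$-cylindrical noise $W$ and adapted $\gamma(H,X)$-valued integrands, transfers to the scalar convolutions $I_t$ acting on deterministic functions in $L^p(\R_+;X)$.

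For the first reduction I would exploit that every $k\in\mathcal{K}$ is recovered from its derivative by $k(u)=\int_0^\infty \one_{(0,v)}(u)\,(-k'(v))\,dv$, valid since $k$ is absolutely continuous and vanishes at infinity. Inserting this into \eqref{eq:Ikoperator} and interchanging the two integrations (a stochastic Fubini argument, legitimate because $G$ is an adapted elementary process) gives $I(k)=\int_0^\infty I(k_v)\,d\mu_k(v)$, where $k_v:=v^{-1/2}\one_{(0,v)}$ and $d\mu_k(v):=v^{1/2}(-k'(v))\,dv$. The normalisation defining $\mathcal{K}$ says precisely that $\mu_k$ has total variation at most $1$, so each $I(k)$ lies in the strongly closed absolutely convex hull of the one-parameter family $\{I(k_v):v>0\}$. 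Since the strongly closed absolutely convex hull of an $R$-bounded family is $R$-bounded with the same constant \cite{KuWe}, $R$-boundedness of $\{I(k_v)\}$ yields $(H_p)$; conversely each $I(k_v)$ lies in the strong closure of $\mathcal{I}$ (approximate $\one_{(0,v)}$ by functions of $\mathcal{K}$ whose derivatives concentrate near $v$), so $(H_p)$ returns the $R$-boundedness of $\{I(k_v)\}$. This identifies $(H_p)$ with the $R$-boundedness of the elementary convolutions.

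For the second reduction I would pass to the deterministic scalar picture by means of the It\^o isomorphism of Theorem \ref{thm:Itoisomorph}. For fixed Rademacher signs the quantity $\sum_n \varepsilon_n\,(I(k_{v_n})G^{(n)})_s$ is, for each $s$, a single stochastic integral of the adapted integrand $\sum_n \varepsilon_n k_{v_n}(s-\cdot)G^{(n)}(\cdot)$; applying Theorem \ref{thm:Itoisomorph} for each $s$, with its constant uniform in $s$ and in the signs, replaces the $L^p(\Omega;X)$-norm of this stochastic integral by the $\gamma^p(L^2(\R_+;H),X)$-norm of its integrand. This is the decoupling mechanism underlying Theorem \ref{thm:Itoisomorph}: the adaptedness is absorbed, and after integrating in $s$ and averaging over the signs the $R$-boundedness of $\{I(k_v)\}$ becomes the $R$-boundedness of the deterministic kernel operators $g\mapsto[s\mapsto v^{-1/2}\one_{(0,v)}(s-\cdot)g(\cdot)]$ on the pertinent $\gamma$-spaces. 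Finally, the $\gamma$-space iteration identity $\gamma(L^2(\R_+;H),X)\simeq\gamma(L^2(\R_+),\gamma(H,X))$ \cite{NeeCMA} strips off $H$ and, through Proposition \ref{prop:Ito} with a scalar Brownian motion, identifies these kernel operators with the operators $I_t$ — but acting on functions valued in $Y:=\gamma(H,X)$ rather than in $X$.

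The main obstacle is precisely this change of target space: the reductions naturally deliver the $R$-boundedness of $\{I_t\}$ on $L^p(\R_+;\gamma(H,X))$, whereas the statement refers to $L^p(\R_+;X)$. What must be shown is that $R$-boundedness of the scalar family $\{I_t\}$ is insensitive to replacing $X$ by $\gamma(H,X)$, equivalently that $(H_p)$ does not depend on the Hilbert space $H$. One direction is immediate, since for a fixed unit vector $h_0\in H$ the map $x\mapsto h_0\otimes x$ embeds $X$ isometrically as a complemented subspace of $\gamma(H,X)$ and intertwines the two families. The reverse passage is where the real work lies; here I would use that the $I_t$ are built from scalar convolution kernels, so that they act as $\gamma$-multipliers and their $R$-boundedness is stable under the functor $\gamma(H,\cdot)$, a point treated systematically in \cite{NVW10b}. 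Once this tensorisation is in hand, chaining the two reductions proves $(H_p)\Leftrightarrow\{I_t:t>0\}\ \text{is }R\text{-bounded}$; the well-definedness of each $I_t$ requires no hypothesis on $X$, being an instance of the scalar It\^o isometry.
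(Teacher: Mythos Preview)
The paper does not prove this proposition. It is stated without proof, with the remark that it is ``implicit in \cite{NVW10}'' and that a systematic study is deferred to \cite{NVW10b}. There is therefore no argument in the present paper against which to compare your attempt.

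Your outline is nonetheless a reasonable and essentially correct strategy. The convex-hull reduction is exactly the right mechanism: writing $k(u)=\int_0^\infty k_v(u)\,d\mu_k(v)$ with $k_v=v^{-1/2}\one_{(0,v)}$ and $|\mu_k|(\R_+)\le 1$ places each $I(k)$ in the closed absolutely convex hull of $\{I(k_v):v>0\}$, and $R$-bounds are preserved under that operation; conversely each $I(k_v)$ lies in the strong closure of $\mathcal I$. This is standard and matches what is done in the cited literature.

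Your second reduction is more compressed but the idea is sound. Applying Theorem \ref{thm:Itoisomorph} pointwise in $s$ and then Fubini in $\omega$ does show that the $R$-boundedness of $\{I(k_v)\}$ on the adapted spaces is equivalent, up to the UMD constant, to the $R$-boundedness of the deterministic kernel operators $g\mapsto[s\mapsto k_v(s-\cdot)g(\cdot)]$ from $L^p(\R_+;\gamma(H,X))$ into $L^p(\R_+;\gamma(L^2(\R_+;H),X))$; re-expressing the range norm via Proposition \ref{prop:Ito} and the $\gamma$-iteration $\gamma(L^2(\R_+;H),X)\simeq\gamma(L^2(\R_+),\gamma(H,X))$ then delivers the family $\{I_t\}$, but acting on $\gamma(H,X)$-valued functions. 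One caveat: this step uses Theorem \ref{thm:Itoisomorph} and hence the UMD property of $X$, which is not stated as a hypothesis of the proposition (though it is present in the ambient Theorem \ref{thm:main}).

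You correctly identify the remaining obstacle --- passing from $\gamma(H,X)$-valued to $X$-valued integrands, equivalently the independence of $(H_p)$ from the Hilbert space $H$ --- and you defer it to \cite{NVW10b}. Since the paper itself defers the entire proposition to that reference, your write-up is at least as complete as what the paper provides, and the parts you do spell out are correct.
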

Stated differently, in order to verify $(H_p)$ it suffices to take $H=\R$ and to consider the kernels
$\frac{1}{\sqrt{t}}\one_{(0,t)}$, $t>0$.

The following theorem gives sufficient conditions for $(H_p)$ in case $X = L^q(\mathcal{O})$.
\begin{thm} \label{thm:J-Lq}
Let $X$ be isomorphic to a closed subspace of a space $L^q(\mathcal{O})$ with $q\in [2, \infty)$. Then
$(H_p)$ holds for all $p\in (2,\infty)$.
The same result holds when $p=q=2$.
\end{thm}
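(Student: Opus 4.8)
The plan is to reduce everything to the case $X=L^q(\mathcal{O})$ itself, for which, by the Proposition preceding the statement, it suffices to prove that the family $\{I_t:t>0\}$ is $R$-bounded from $L^p(\R_+;L^q(\mathcal{O}))$ into $L^p(\R_+\times\O;L^q(\mathcal{O}))$, where $I_t g(s)=t^{-1/2}\int_{(s-t)\vee 0}^{s}g(r)\,dB_r$; the general closed-subspace case then follows from the hereditary property of $(H_p)$ noted just before the theorem. First I would dispose of the endpoint $p=q=2$: there the target $L^2(\R_+\times\O;L^2(\mathcal{O}))$ is a Hilbert space, so $R$-boundedness coincides with uniform boundedness, and the latter holds because $L^2$ has martingale type $2$ (apply Theorem \ref{thm:Mtype2-stochint} and Young's inequality, exactly as in the discussion preceding $(H_p)$). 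From now on assume $p\in(2,\infty)$ and $q\in[2,\infty)$.

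Fix $t_1,\dots,t_N>0$ and $g_1,\dots,g_N\in L^p(\R_+;L^q(\mathcal{O}))$. Using \eqref{eq:KKradonifying} to pass to $p$-th Rademacher moments, and then Khintchine's inequality in $L^q$ fibrewise in $\xi\in\mathcal{O}$ on both sides, the desired $R$-bound becomes equivalent to the square-function estimate
\[\int_{\R_+}\E\Big\n\Big(\sum_{n=1}^N |I_{t_n}g_n(s)|^2\Big)^{1/2}\Big\n_{L^q(\mathcal{O})}^p\,ds \lesssim_{p,q} \int_{\R_+}\Big\n\Big(\sum_{n=1}^N |g_n(s)|^2\Big)^{1/2}\Big\n_{L^q(\mathcal{O})}^p\,ds,\]
where $\E$ is expectation over $B$. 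The key observation is that, for each fixed $s$, the $L^q(\mathcal{O};\ell^2_N)$-valued random variable $\xi\mapsto (I_{t_n}g_n(s,\xi))_{n=1}^N$ is Gaussian, being a vector of Wiener integrals of deterministic integrands. Applying \eqref{eq:KKradonifying} at exponent $q$ (which avoids any constraint between $p$ and $q$), then Fubini, and finally the Gaussian moment equivalence fibrewise in $\xi$, the left-hand integrand is comparable to $\n(\sum_n A_{t_n}h_n(s))^{1/2}\n_{L^q(\mathcal{O})}^p$, where $h_n:=|g_n|^2\ge 0$ and $A_t$ is the backward averaging operator $(A_t f)(s):=\tfrac1t\int_{(s-t)\vee 0}^{s}f(r)\,dr$; indeed the variance of $I_{t_n}g_n(s,\xi)$ equals $A_{t_n}h_n(s,\xi)$.

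It then remains to prove the purely deterministic averaging estimate $\n(\sum_n A_{t_n}h_n)^{1/2}\n_{L^p(\R_+;L^q)}\lesssim \n(\sum_n h_n)^{1/2}\n_{L^p(\R_+;L^q)}$ for arbitrary $h_n\ge 0$. Squaring, this reads $\n\sum_n A_{t_n}h_n\n_{L^{p/2}(\R_+;L^{q/2})}\lesssim \n\sum_n h_n\n_{L^{p/2}(\R_+;L^{q/2})}$, which I would prove by duality. Testing against $0\le\psi\in L^{(p/2)'}(\R_+;L^{(q/2)'})$ and using that the adjoint of $A_t$ is the forward average, dominated fibrewise by the one-sided Hardy--Littlewood maximal operator $M$ acting in the time variable, gives $\langle\sum_n A_{t_n}h_n,\psi\rangle=\sum_n\langle h_n,A_{t_n}^*\psi\rangle\le\langle\sum_n h_n,M\psi\rangle$. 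One then finishes by H\"older together with the Fefferman--Stein vector-valued maximal inequality $\n M\psi\n_{L^{(p/2)'}(\R_+;L^{(q/2)'})}\lesssim \n\psi\n_{L^{(p/2)'}(\R_+;L^{(q/2)'})}$.

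The main obstacle is precisely this last maximal inequality, and it is here that $p>2$ is essential: it guarantees $(p/2)'\in(1,\infty)$, the range in which the fibrewise one-dimensional Hardy--Littlewood maximal operator is vector-valued bounded, while the inner exponent $(q/2)'\in(1,\infty]$ causes no trouble (for $q=2$ one lands in the $\ell^\infty$ Fefferman--Stein bound, and for $p=q=2$ the separate Hilbert-space argument above applies). I also expect the bookkeeping in the Gaussian reduction to require care --- in particular, the choice to apply Kahane--Khintchine at exponent $q$ rather than $p$ is exactly what removes the apparent obstruction when $p<q$.
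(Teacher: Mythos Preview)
Your proposal is correct and follows essentially the route that the paper attributes to \cite{NVW10}: reduce via the Proposition to the family $\{I_t\}$ acting on deterministic integrands, exploit the resulting Gaussianity together with Kahane--Khintchine to replace the stochastic convolutions by the backward averages $A_t$, and then close by duality with the Fefferman--Stein vector-valued maximal inequality (which is exactly where $p>2$ is needed). The paper itself gives no proof here, only recording that Fefferman--Stein and Theorem~\ref{thm:Itoisomorph} are the key inputs; in your argument the latter is hidden in the Proposition you invoke to pass from $(H_p)$ to the deterministic $R$-boundedness statement.
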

This is a non-trivial result which has been proved in \cite{NVW10} using the
Fefferman--Stein maximal theorem; it is here that the full force of Theorem \ref{thm:Itoisomorph}
is needed.  By the above remarks, $(H_p)$ also holds for Sobolev spaces $W^{\alpha,p}(\mathcal{O})$
as long as $p\in [2, \infty)$.
It is an open problem to describe the class of
Banach spaces $X$ to which
the result of Theorem \ref{thm:J-Lq} can be extended. A sufficient condition
for Hypothesis $(H_p)$ for any $p>2$ is that $X$ be a UMD
Banach function space for which the norm can be
written as $\|x\|_X  = \| \, |x|^2\, \|_F$, and $F$ is another UMD Banach
function space \cite{NVW10b}.

In order to set the stage for the proof of Theorem \ref{thm:main} we need to introduce some terminology.
Let $X$ be a
Banach space
and let $\Sigma_\sigma = \{z\in \C\setminus\{0\}: \ |\arg z| < \sigma\}$ denote the open sector of angle $\sigma$
about the positive real axis in the complex plane.
Let $A$ be a sectorial operator on $X$ with a bounded $H^\infty(\Sigma_\sigma)$-calculus.
Following \cite{KWcalc} and \cite[Chapter 12]{KuWe}, we
denote by $\A$ the sub-algebra of $\calL(X)$ of all operators commuting with the
resolvent $R(\lambda,A) = (\lambda-A)^{-1}$. For $\nu>\sigma$,
the space of all bounded analytic functions
$f: \Sigma_\nu\to \A$ with $R$-bounded range is denoted by
$RH^\infty(\Sigma_\nu,\A)$. By $RH^\infty_0(\Sigma_\nu,\A) $ we denote
the functions in $RH^\infty(\Sigma_\nu,\A)$ whose operator norm is dominated by
$|\lambda|^\eps/(1+|\lambda|)^{2\eps}$ for some $\eps>0$. For such $f$ we may define
$$ f(A) = \frac1{2\pi i} \int_{\partial \Sigma_{\sigma'}} f(\lambda) R(\lambda,A)\,d\lambda$$
as an absolutely convergent Bochner integral in $\calL(X)$ for $\sigma<\sigma'<\nu$.
By \cite[Theorem 4.4]{KWcalc} (see also \cite[Theorem 12.7]{KuWe}), the mapping $f\mapsto f(A)$ extends
to a bounded algebra homomorphism from $RH^\infty(\Sigma_\nu,\A)$
to $\calL(X)$ which is unique in the sense that it has the following
convergence property: if $(f_n)$ is a bounded sequence in $RH^\infty(\Sigma_\nu,\A)$
(in the sense that the corresponding $R$-bounds are uniformly bounded) and
$f_n(\lambda)x \to f(\lambda)x$ for some $f\in RH^\infty(\Sigma_\nu,\A)$ and all $\lambda\in \Sigma_\nu$
and $x\in X$, then $f_n(A)x \to f(A)x$ for all $x\in X$.

\begin{proof}[Proof of Theorem \ref{thm:main}]
Let $X$ be a UMD Banach space with type $2$
satisfying Hypothesis $(H_p)$ for some fixed $p\in [2, \infty)$.
For adapted elementary processes $G:\R_+\times\O\to \g(H,X)$
and $\lambda\in\C$ with $\Re\lambda>0$
define
$$(L_\lambda G)_t := \int_0^t \lambda^{1/2} e^{-\lambda(t-s)}\,G_s\,dW_s, \quad t>0.$$
The functions $k_\lambda (t):=  \lambda^{1/2} e^{-\lambda t}$ are uniformly bounded in the norm of $L^2(\R_+)$
and therefore Young's inequality and Theorem \ref{thm:Burk-Mtype2}
show that the operators
$L_\lambda$ are bounded from $L_\F^p(\R_+\times \O;\g(H,X))$ to  $L^p(\R_+\times \O;X)$.
Moreover, the substitution $t\Re\lambda = s$ gives, for $\lambda\in\Sigma_\nu$,
\begin{align*}\int_0^\infty t^{1/2} | k_\lambda'(t)| \, dt
\le \frac1{\sqrt{\cos \nu}} \int_0^\infty  s^{1/2} e^{-s}\,ds =  \frac12\sqrt{\frac{\pi}{\cos \nu}}.
\end{align*}
This shows that the functions $k_\lambda$, $\lambda\in\Sigma_\nu$,
belong to $\mathcal{K}$ after scaling by a constant depending only on $\nu$.
Hence, by Hypothesis $(H_p)$, for any $0\le \nu<\frac12\pi$ the family
$\{L_\lambda: \ \lambda\in \Sigma_\nu\}$ is $R$-bounded from
$L_\F^p(\R_+\times \O;\g(H,X))$ to  $L_\F^p(\R\times \O;X)$.

In order to view the operators $L_\lambda$ as bounded operators
on $\wt X:= L_{\F}^p(\R_+\times\O;\gamma(H,X))$ we think of $X$ as being embedded
isometrically as a closed subspace of $\gamma(H,X)$ by identifying
each $x\in X$ with the rank one operator $h_0\otimes x$,
where $h_0\in H$ is an arbitrary but fixed unit vector. Using this identification,
$L_{\F}^p(\R_+\times\O;X)$ is isometric to a closed subspace of $\wt X$ and we may identify
$L_\lambda$ with a bounded operator $\wt L_\lambda$ on $\wt X$; the resulting family $\{\wt L_\lambda: \ \lambda\in \Sigma_\nu\}$
is $R$-bounded on $\wt X$.

Suppose $A$ has a bounded $H^\infty(\Sigma_\sigma)$-calculus on $X$
for some $\sigma\in [0,\frac12\pi)$.
Let $\wt A$ denote the induced operator on
$\wt X = L_{\F}^p(\R_+\times\O;\g(H,X))$,
given by
$(\wt A G)_t := A (G_t)$ for $G\in L_{\F}^p(\R_+\times\O;\g(H,\Dom(A)))$.
It is routine to check that $\wt A$ has a bounded $H^\infty(\Sigma_\sigma)$-calculus on $\wt X$
and
$$ (\varphi(\wt A) G)_t = \varphi(A) (G_t) .$$
Noting that the operators $\wt L_\lambda$ and $R(\lambda,\wt A)$ commute,
the above-mentioned result from \cite{KWcalc}, applied to the function
$$ f(\lambda) =\wt L_\lambda, \quad \lambda\in \Sigma_\nu,$$ shows that the operator
$$G \mapsto
f(\wt A)G = \int_{\partial \Sigma_{\sigma'}} R(\lambda,\wt A) \wt L_\lambda G\,d\lambda
$$ with $\sigma<\sigma'<\nu$, is well defined and bounded on $\wt X$.
It follows that the operator
$$G \mapsto
f(\wt A)G = \int_{\partial \Sigma_{\sigma'}} R(\lambda,\wt A) L_\lambda G\,d\lambda
$$ with $\sigma<\sigma'<\nu$, is well defined and bounded from $\wt X$ to $L_{\F}^p(\R_+\times\O;X)$
(cf. \cite[Theorem 4.5]{KWcalc}).
By the stochastic Fubini theorem, for adapted elementary processes
$G: \R_+\times\O\to \calL(H,\Dom(A))$ we have, for all $t>0$,
\begin{align*}
(f(\wt A)G)_t
& = \int_{\partial \Sigma_{\sigma'}} R(\lambda,\wt A) L_\lambda G_t\,d\lambda
\\ & = \int_{\partial \Sigma_{\sigma'}} \int_0^t \lambda^{1/2} e^{-\lambda (t-s)} R(\lambda, A)G_s\,dW_s\,d\lambda
\\ & =  \int_0^t\int_{\partial \Sigma_{\sigma'}} \lambda^{1/2} e^{-\lambda (t-s)} R(\lambda, A)G_s\,d\lambda \,dW_s
\\ &  = \int_0^t  A^{1/2} e^{-(t-s)A}G_s\,dW_s.
\end{align*}
Putting the results together we obtain
\begin{align*}
\Big\n t\mapsto \int_0^t  A^{1/2} S(t-s)G_s\,dW_s\Big\n_{L^p(\R_+\times\Om;X)}
& = \n f(\wt A)G\n_{L^p(\R_+\times\Om;X)}
\\ & \le C^p \n G\n_{L^p(\R_+\times\Om;\gamma(H,X))}
\end{align*}
This proves Theorem \ref{thm:main}.
\end{proof}

Next we deduce a variant of
Theorem \ref{thm:main} for processes with mixed integrability assumptions.
Its proof is a straightforward application of the two-sided estimates for stochastic integrals in UMD spaces.

\begin{cor}
Let the assumptions of Theorem \ref{thm:main} be satisfied,
and
let $G\in L^r_{\F}(\O;L^p(\R_+;\g(H,X)))$ with $r\in (0,\infty)$ be given.
If $U$ is defined as in \eqref{eq:USGdW}, then
\begin{equation}\label{eq:Apq3}
\E \n A^{1/2} U\n_{L^p(\R_+;X)}^r \le C^r \E\n
G\n_{L^p(\R_+;\g(H,X))}^r
\end{equation}
with a constant $C$ independent of $G$.
\end{cor}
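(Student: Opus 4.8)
The plan is to recast the entire estimate as a stochastic integral with values in a single UMD space and then let the two-sided Itô isomorphism do the work of changing the $\Omega$-exponent. Set $Y := L^p(\R_+;X)$; since $X$ is UMD and $p\in[2,\infty)\subseteq(1,\infty)$, the space $Y$ is again UMD. Write $V := A^{1/2}U$, regarded as a $Y$-valued random variable. The first step is to identify $V$ with a genuine $Y$-valued stochastic integral $V=\int_0^\infty \Psi\,dW$, where $\Psi$ is the $\gamma(H,Y)$-valued process determined by $(\Psi_s h)(t)=\one_{[s,\infty)}(t)\,A^{1/2}S(t-s)G_s h$. For adapted elementary $G$ this is immediate from the definition of the integral, and the general case follows by a stochastic Fubini argument of the kind already used in the proof of Theorem \ref{thm:main}. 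Because the kernel $A^{1/2}S(t-s)$ is deterministic and $G$ is $\F$-adapted, $\Psi_s$ is $\F_s$-measurable, so $\Psi$ is adapted.

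The second step is a pathwise (purely deterministic) bound: I claim that $\|\Psi(\omega)\|_{\gamma(L^2(\R_+;H),Y)}\le C'\|G(\omega)\|_{L^p(\R_+;\gamma(H,X))}$ for almost every $\omega$, with $C'$ independent of $\omega$. To see this, fix a deterministic $g\in L^p(\R_+;\gamma(H,X))$ and let $\Psi^g$ denote the associated convolution integrand. Viewing $g$ as a (trivially $\wt\F$-adapted) integrand on an auxiliary probability space $\wt\Omega$ carrying an independent $H$-cylindrical Brownian motion $\wt W$, Theorem \ref{thm:main} yields $\wt\E\,\|\int_0^\infty \Psi^g\,d\wt W\|_Y^p\le C^p\|g\|_{L^p(\R_+;\gamma(H,X))}^p$. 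Since $\Psi^g$ is deterministic, the Itô isomorphism of Proposition \ref{prop:Ito} identifies the left-hand side with $\|\Psi^g\|_{\gamma^p(L^2(\R_+;H),Y)}^p$, which by the Kahane--Khintchine inequality \eqref{eq:KKradonifying} is comparable to $\|\Psi^g\|_{\gamma(L^2(\R_+;H),Y)}^p$. Specialising $g=G(\omega)$, which lies in $L^p(\R_+;\gamma(H,X))$ for a.e.\ $\omega$ because $G\in L^r(\Omega;L^p(\R_+;\gamma(H,X)))$, gives the claim with a uniform constant.

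The third step applies the upper bound of the Itô isomorphism in the UMD space $Y$. By Theorem \ref{thm:Itoisomorph}, extended to all exponents $0<r<\infty$ via the Lenglart argument of Remark \ref{rem:kleinepLenglart}, one has $\E\,\|V\|_Y^r=\E\,\|\int_0^\infty\Psi\,dW\|_Y^r\le C_{r,Y}^r\,\E\,\|\Psi\|_{\gamma(L^2(\R_+;H),Y)}^r$; the pathwise bound of the second step together with $G\in L^r(\Omega;L^p(\R_+;\gamma(H,X)))$ guarantees that $\Psi$ lies in the admissible integrand class. Combining this with the pathwise bound yields $\E\,\|A^{1/2}U\|_{L^p(\R_+;X)}^r\le C_{r,Y}^r (C')^r\,\E\,\|G\|_{L^p(\R_+;\gamma(H,X))}^r$, which is precisely \eqref{eq:USGdW}'s regularity estimate at exponent $r$.

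The crux of the argument, and the only point requiring genuine care, is the identification in the first step that the time-pathwise stochastic convolution assembles into one $Y$-valued stochastic integral with an adapted integrand of the correct class; this is a stochastic Fubini statement, transparent for elementary $G$ and extended by density. Everything else is bookkeeping of constants: the deterministic bound isolates all the $\Omega$-randomness into the fixed (random) $\gamma$-norm of $\Psi$, and the upper Itô estimate in $Y$ then transfers the single known exponent $r=p$ of Theorem \ref{thm:main} to every $r\in(0,\infty)$, precisely because the intervening $\gamma$-norm carries no coupling to the Gaussian exponent.
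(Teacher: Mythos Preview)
Your proof is correct and follows essentially the same route as the paper's: first extract from Theorem \ref{thm:main} and Proposition \ref{prop:Ito} the deterministic (pathwise) bound $\|\Psi^g\|_{\gamma(L^2(\R_+;H),L^p(\R_+;X))}\le C\|g\|_{L^p(\R_+;\gamma(H,X))}$, then apply the It\^o isomorphism (extended to exponents $0<r<\infty$) in the UMD space $Y=L^p(\R_+;X)$ and plug in the pathwise bound. The paper's version is slightly more terse---it suppresses the auxiliary space $\wt\Omega$ and the explicit stochastic Fubini identification you spell out in Step~1---but the argument is the same.
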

\begin{proof}
By Proposition \ref{prop:Ito} and Theorem \ref{thm:main},
applied to deterministic functions $G\in L^p(\R_+;\g(H,\Dom(A)))$, we have
\begin{equation}\label{eq:detG}
\|s\mapsto A^{1/2}S(t-s) \one_{[0,t](s)} G_s\|_{\g(L^2(\R_+;H), L^p(\R_+;X))} \leq C \|G\|_{L^p(\R_+;\g(H,\Dom(A)))}.
\end{equation}

Next let $G\in L^r_{\F}(\O;L^p(\R_+;\g(H,\Dom(A))))$.
By Theorem \ref{thm:Itoisomorph} (or rather, by its extension to the closure of the elementary
adapted processes, cf. \eqref{eq:I-isom})
applied to the UMD space $L^p(\R_+;X)$  we obtain
\begin{align*}
\ & \n A^{1/2} U\n_{L^r(\O;L^p(\R_+;X))} \\ & \qquad \eqsim
\|s\mapsto A^{1/2}S(t-s) \one_{[0,t](s)} G_s\|_{L^r(\O;\g(L^2(\R_+;H), L^p(\R_+;X)))}.
\end{align*}
Now \eqref{eq:Apq3} follows by applying the estimate \eqref{eq:detG} pointwise in $\O$.
\end{proof}

\begin{rem}
 A variation of the notion of stochastic maximal $L^p$-regularity, in which the $L^p(\R_+;X)$-norm
over the time variable is replaced by the $\gamma(L^2(\R_+),X)$-norm, has been studied in \cite{NVW-gamma}.
With this change, a stochastic maximal $L^p$-regularity result holds for arbitrary UMD Banach spaces
with Pisier's property $(\alpha)$ and all exponents $0<p<\infty$.
In this situation the trace inequality \eqref{eq:trace} holds with $(X,\Dom(A))_{\frac12-\frac1p,p}$ replaced by $X$.
\end{rem}

\section{Poisson stochastic integration}

Up to this point we have been exclusively concerned with the Gaussian case.
Here we shall briefly address the problem of extending Theorem \ref{thm:Itoisomorph} to more general
classes of integrators. More specifically, with an eye towards the L\'evy case, a natural
question is whether similar two-sided estimates as in Theorem \ref{thm:Itoisomorph} can be given
in the Poissonian case.
This question has been addressed recently by Dirksen \cite{Dirksen}, who was able to work out the
correct norms in the special case $X = L^q(\mathcal{O})$.

We begin by recalling some standard definitions.  Let $(\Om,\F,\P)$ be a
probability space and let $(E,\calE)$ be a measurable space.
We write $ \overline \N = \N \cup\{\infty\}$.

\begin{defn}
\label{def:randomMeasure}
\index{random measure!Poisson}
A \emph{random measure} is a mapping
$N: \Omega\times\cE\to \overline \N$
with the following properties:
\begin{enumerate}
\renewcommand{\labelenumi}{\rm(\roman{enumi})}
\item For all $B\in\cE$ the mapping $N(B): \omega\mapsto N(\om,B)$ is measurable;
\item For all $\om\in\Om$, the mapping $B\mapsto N(\om,B)$ is a measure.
\end{enumerate}
The measure $\mu(B):= \E N(B)$ is called the \emph{intensity measure} of $N$. \par
\end{defn}

\begin{defn}
 A random measure $N : \Omega\times\cE\to \overline \N $
with intensity $\mu$ is called a {\em Poisson random measure} if the following conditions are satisfied:
\begin{enumerate}
\renewcommand{\labelenumi}{\rm(\roman{enumi})}
\addtocounter{enumi}{2}
\item For all pairwise disjoint sets $B_1,\ldots,B_n$ in $\cE$ the random variables
$N(B_1)$, $\ldots$\,, $N(B_n)$ are independent;
\item For all $B \in \cE$ with $\mu(B)<\infty$ the random variable $N(B)$ is
Poisson distributed with parameter $\mu(B)$.
\end{enumerate}
\end{defn}
Recall that a random variable $f:\Om\to \N$ is {\em Poisson distributed} with {\em parameter} $\lambda > 0$
if \begin{align*}
\P(f = n ) =\frac{\lambda^n}{n!} e^{-\lambda}, \qquad n \in \N.
\end{align*}
For $B \in \cE$ with  $\mu(B)<\infty$ we write
$$\tilde{N}(B) := N(B) - \mu(B).$$
It is customary to call $\tilde N$  the \emph{compensated
Poisson random measure} associated with $N$ (even it is not a random
measure in the sense of Definition \ref{def:randomMeasure}, as it is defined
on the sets of finite $\mu$-measure only).

Let $(J,\cJ,\nu)$ be a $\sigma$-finite measure space and let $N$ be a Poisson
random measure on $(\R_+\times J,{\mathscr{B}}(\R_+)\times \cJ,dt\times \nu)$.
Throughout this section we let
$\F$ be the filtration generated by the random variables
$\{\tilde{N}((s,u]\times A) \ : \ 0\leq s<u\leq t, A \in \cJ\}$.

An {\em adapted elementary process}  $\phi:\Om\times \R_+\times J\rightarrow X$
is a linear combination of processes of the form
$\phi =  \one_{F} \one_{(s,t]\times A} \otimes x$,
with $0\le s< t<\infty$, $A\in \cJ$ satisfying $\nu(A_j)<\infty$, $F\in \F_{s}$, and $x\in X$.
For an adapted elementary process $\phi$ and a set $B \in \cJ$
we define the \emph{(compensated) Poisson stochastic integral} by
\begin{equation*}
\int_{\R_+\times B}   \one_{F} \one_{ (s,t]\times A} \otimes x\, d\tilde{N}: =
\one_F \tilde{N}((s,t]\times (A\cap B)) \otimes x
\end{equation*}
and extend this definition by linearity.

The next two theorems, taken from \cite{DMN},  give an upper and lower
bound for the Poisson stochastic integral of an elementary adapted
process in the presence of non-trivial martingale type and finite martingale cotype, respectively.
Theorem \ref{thm:typesBanach} may be regarded as a Poisson analogue
of Theorem \ref{thm:Mtype2-stochint}.

We write $$ \cD_{s,X}^p := L^p(\Om; L^s(\R_+\times J;X)).$$
\begin{thm}
\label{thm:typesBanach} Let $\phi$ be an elementary adapted process with values in a
 Banach space $X$ with martingale type $s\in (1,2]$.
\begin{enumerate}
 \item[\rm(1)]
If $1<s\leq p<\infty$ we have, for all $B \in \cJ$,
\begin{align*}
  \Big(\E\sup_{t\geq 0}\Big\|\int_{[0,t]\times B} \phi \ d\tilde{N}\Big\|^p\Big)^{1/p}
\lesssim_{p,s,X} \n \one_B \phi\n_{ \cD_{s,X}^p \cap \cD_{p,X}^p}.
\end{align*}
 \item[\rm(2)]
If $1\leq p<s$  we have, for all $B \in \cJ$,
\begin{align*}
 \Big(\E\sup_{t\geq 0}\Big\|\int_{[0,t]\times B} \phi \ d\tilde{N}\Big\|^p\Big)^{1/p}
\lesssim_{p,s,X} \n \one_B \phi\n_{ \cD_{s,X}^p + \cD_{p,X}^p}.
\end{align*}
\end{enumerate}
\end{thm}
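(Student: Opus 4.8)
\emph{Approach.} The plan is to reduce both inequalities to the scalar behaviour of the positive random variable $\int_{\R_+\times B}\n\phi\n^s\,dN$, and then to invoke two essentially off-the-shelf scalar Poisson moment estimates. The passage from the $X$-valued integral to this scalar quantity is supplied by the Burkholder--Davis--Gundy inequality available in martingale type $s$ spaces. First I would discretise: fix a partition $0=t_0<t_1<\cdots$ of $\R_+$ and set $f_n:=\int_{[0,t_n]\times B}\phi\,d\tilde{N}$, a martingale with respect to $(\F_{t_n})$ with differences $d_n:=\int_{(t_{n-1},t_n]\times B}\phi\,d\tilde{N}$. Since $X$ has martingale type $s$, the martingale type $s$ Burkholder--Davis--Gundy inequality (Pisier), valid for all $1\le p<\infty$ in its maximal form, gives
\[
\Big(\E\max_n\n f_n\n^p\Big)^{1/p}\lesssim_{p,s,X}\Big\n\Big(\sum_n\n d_n\n^s\Big)^{1/s}\Big\n_{L^p(\Om)}.
\]
Refining the partition and using right-continuity of the paths, the left-hand side converges to $(\E\sup_{t\ge0}\n\int_{[0,t]\times B}\phi\,d\tilde{N}\n^p)^{1/p}$; on the right the contributions of the predictable, $O(\Delta t_n)$ compensator parts of the $d_n$ are negligible because $s>1$, so $\sum_n\n d_n\n^s$ converges to the sum over the jumps $\int_{\R_+\times B}\n\phi\n^s\,dN$. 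Writing $g:=\n\one_B\phi\n^s$, $r:=p/s$ and $\mu:=dt\times\nu$, and noting $g^r=\n\one_B\phi\n^p$, this reduces everything to controlling $\E(\int g\,dN)^{r}$ by $\E(\int g\,d\mu)^r$ and $\E\int g^r\,d\mu$.

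\emph{Part (1), $r=p/s\ge1$.} The increasing pure-jump process $t\mapsto\int_{[0,t]\times J}g\,dN$ has predictable compensator $t\mapsto\int_{[0,t]\times J}g\,d\mu$ and jumps equal to $g$, so the classical moment inequality for such processes yields
\[
\E\Big(\int g\,dN\Big)^{r}\lesssim_r\E\Big(\int g\,d\mu\Big)^{r}+\E\int g^{r}\,d\mu.
\]
Translating back through $g=\n\one_B\phi\n^s$ and taking $s$-th roots produces exactly the intersection norm $\n\one_B\phi\n_{\cD_{s,X}^p\cap\cD_{p,X}^p}$, the two summands matching the two terms on the right.

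\emph{Part (2), $0<r<1$.} Here the required bound lies in the smaller \emph{sum} space, so both terms must be exploited at once. Given any decomposition $\one_B\phi=\phi_1+\phi_2$, linearity gives $\int\one_B\phi\,d\tilde{N}=\int\phi_1\,d\tilde{N}+\int\phi_2\,d\tilde{N}$; applying the inequality of the first step to each summand reduces matters to scalar bounds for $g_i:=\n\phi_i\n^s$. For $\phi_1$ I would use Lenglart's inequality \cite{Lenglart}: optional stopping of the compensated integral shows that $\int g_1\,dN$ is dominated by its predictable compensator $\int g_1\,d\mu$, so for $r<1$ one gets $\E(\int g_1\,dN)^{r}\lesssim_r\E(\int g_1\,d\mu)^{r}=\n\phi_1\n_{\cD_{s,X}^p}^p$. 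For $\phi_2$ I would use the subadditivity of $x\mapsto x^{r}$ on the jumps, $(\int g_2\,dN)^{r}\le\int g_2^{r}\,dN$, whence $\E(\int g_2\,dN)^{r}\le\E\int g_2^{r}\,d\mu=\n\phi_2\n_{\cD_{p,X}^p}^p$. Summing the two estimates and taking the infimum over all decompositions gives the bound by $\n\one_B\phi\n_{\cD_{s,X}^p+\cD_{p,X}^p}$.

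\emph{Main obstacle.} The scalar moment inequalities (the increasing-process bound and Lenglart's inequality) are standard; the delicate point is the discretisation limit. The hard part will be verifying that the maximal functions converge and, above all, that the compensator contributions to $\sum_n\n d_n\n^s$ genuinely vanish as the mesh shrinks, so that this sum really converges to $\int_{\R_+\times B}\n\phi\n^s\,dN$. One also needs the martingale type $s$ Burkholder--Davis--Gundy inequality in the maximal, dimension-free form down to $p=1$, the endpoint occurring in part (2); there Doob's inequality is unavailable, and the maximal bound must instead be obtained from the maximal form of the martingale inequality (again via a Lenglart-type argument).
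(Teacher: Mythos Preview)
The paper is a survey and does not supply a proof of this theorem; it merely quotes the result from \cite{DMN}. So there is no ``paper's own proof'' to compare against here.

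That said, your outline is essentially the argument of \cite{DMN}. The two pillars are exactly the ones you identify: (i) Pisier's Burkholder--Davis--Gundy inequality in martingale type $s$ spaces, which for every $1\le p<\infty$ controls $\bigl(\E\sup_n\|f_n\|^p\bigr)^{1/p}$ by $\bigl\|(\sum_n\|d_n\|^s)^{1/s}\bigr\|_{L^p(\Omega)}$, and (ii) scalar moment bounds for the positive Poisson integral $\int g\,dN$. Your treatment of (ii) is correct in both regimes: the increasing-process inequality for $r=p/s\ge 1$ (Lenglart--L\'epingle--Pratelli type), and the combination of Lenglart domination with the elementary subadditivity $(\sum a_i)^r\le\sum a_i^r$ for $r<1$.

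Two comments on the execution. First, since $\phi$ is an \emph{elementary} adapted process, the stochastic integral $\int_{[0,t]\times B}\phi\,d\tilde N$ is already piecewise constant in $t$ and has only finitely many jumps; you do not need a genuine mesh-refinement limit. Choosing the partition at the finitely many endpoints occurring in $\phi$, each $d_n$ is a finite sum $\sum_j\one_{F_j}\tilde N((t_{n-1},t_n]\times A_j)x_j$, and the identity $\sum_n\|d_n\|^s=\int_{\R_+\times B}\|\phi\|^s\,dN$ holds exactly (up to the compensator correction you mention), no limiting argument required. This removes the obstacle you flagged. Second, for the endpoint $p=1$ in part (2), the maximal form of the martingale type $s$ inequality does indeed hold; in \cite{DMN} this is obtained precisely via a good-$\lambda$/Lenglart extrapolation from the case $p=s>1$, just as you suggest.
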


Theorem~\ref{thm:typesBanach} extends several known vector-valued inequalities
in the literature. In the special case where $X=\R^n$ and $2\leq p<\infty$, the estimate
(1) was obtained in  \cite[p. 335, Corollary 2.12]{Ku} by a completely different argument based on
It\^{o}\textquoteright s formula. An estimate for Hilbert spaces $X$ and $2\leq p<\infty$ was
obtained in \cite[Lemma 3.1]{MPR}. The estimate (1) is slightly stronger in
this case.
In \cite[Lemma 4]{MR}, a slightly weaker inequality than
(1) was obtained in the special case $X=L^s(\mu)$ and $p=s\geq 2$. This result was deduced
from the corresponding scalar-valued inequality via Fubini's theorem. Finally, in \cite{Hau11},
the inequality (1) was obtained in the special case when $p=s^n$ for some integer $n\ge 1$.
Using a different approach, Theorem~\ref{thm:typesBanach} has been obtained independently
by Zhu \cite{Zhu}.

The following `dual' version of Theorem~\ref{thm:typesBanach} holds for Banach spaces with martingale cotype.
 \begin{thm}
 \label{thm:cotypesBanach} Let $\phi$ be an elementary adapted process with values in a
  Banach space $X$ with martingale cotype $s\in [2,\infty)$.
 \begin{enumerate}
  \item[\rm(1)]
 If $s\leq p<\infty$ we have, for all $B \in \cJ$ and $t\geq 0$,
 \begin{align*}
 \n \one_{[0,t]\times B} \phi\n_{ \cD_{s,X}^p \cap \cD_{p,X}^p}
\lesssim_{p,s,X} \Big(\E \Big\|\int_{[0,t]\times B} \phi \ d\tilde{N}\Big\|^p\Big)^{1/p}.
 \end{align*}
  \item[\rm(2)]
 If $1<p<s$ we have, for all $B \in \cJ$ and $t\geq 0$,
 \begin{align*}
\n \one_{[0,t]\times B} \phi\n_{ \cD_{s,X}^p + \cD_{p,X}^p}
\lesssim_{p,s,X} \Big(\E\Big\|\int_{[0,t]\times B} \phi\ d\tilde{N}\Big\|^p\Big)^{1/p}.
 \end{align*}
 \end{enumerate}
\end{thm}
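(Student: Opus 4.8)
The plan is to derive Theorem~\ref{thm:cotypesBanach} from Theorem~\ref{thm:typesBanach} by duality. I would start from the standard fact that $X$ has martingale cotype $s\in[2,\infty)$ if and only if $X\s$ has martingale type $s'$, where $\frac1s+\frac1{s'}=1$, and recall that nontrivial martingale cotype makes $X$ super-reflexive, hence reflexive; this permits a clean use of duality and the identification $(\cD_{r,X}^q)\s=\cD_{r',X\s}^{q'}$ of the building blocks. Since $X\s$ then has martingale type $s'\in(1,2]$, Theorem~\ref{thm:typesBanach} supplies \emph{upper} bounds for compensated Poisson integrals of $X\s$-valued adapted elementary processes, and these are exactly what is needed on the test-function side of a pairing to produce the desired \emph{lower} bounds for $X$.

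The bridge is the bilinear isometry for the compensated Poisson integral: for adapted elementary $\phi$ and $\psi$ with values in $X$ and $X\s$,
$$ \E\Big\langle \int_{[0,t]\times B}\phi\,d\tilde{N},\ \int_{[0,t]\times B}\psi\,d\tilde{N}\Big\rangle = \E\int_{[0,t]\times B}\langle\phi,\psi\rangle\,dr\,d\nu,$$
which follows from the independence of increments together with $\E[\tilde{N}(A)\tilde{N}(A')]=\mu(A\cap A')$. Applying H\"older's inequality in the $L^p(\Om;X)$--$L^{p'}(\Om;X\s)$ duality to the left-hand side bounds the pairing by $\n\int_{[0,t]\times B}\phi\,d\tilde{N}\n_{L^p(\Om;X)}\cdot\n\int_{[0,t]\times B}\psi\,d\tilde{N}\n_{L^{p'}(\Om;X\s)}$, and the second factor is estimated via Theorem~\ref{thm:typesBanach} for $X\s$ (its time-supremum only strengthens the bound). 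In case~(1), $s\le p<\infty$ gives $1<p'\le s'$, so one invokes part~(2), the $+$-estimate, bounding this factor by $\n\psi\n_{\cD_{s',X\s}^{p'}+\cD_{p',X\s}^{p'}}$; in case~(2), $1<p<s$ gives $s'<p'<\infty$, so one invokes part~(1), the $\cap$-estimate.

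It then remains to take the supremum over $\psi$. By the duality of a compatible Banach couple together with reflexivity of $X$, one has isometrically $(\cD_{s,X}^p\cap\cD_{p,X}^p)\s=\cD_{s',X\s}^{p'}+\cD_{p',X\s}^{p'}$ and $(\cD_{s,X}^p+\cD_{p,X}^p)\s=\cD_{s',X\s}^{p'}\cap\cD_{p',X\s}^{p'}$. Consequently
$$ \n\one_{[0,t]\times B}\phi\n_{\cD_{s,X}^p\cap\cD_{p,X}^p}=\sup\Big\{\Bigl|\E\int_{[0,t]\times B}\langle\phi,\psi\rangle\,dr\,d\nu\Bigr|:\ \n\psi\n_{\cD_{s',X\s}^{p'}+\cD_{p',X\s}^{p'}}\le1\Big\},$$
and combining this with the estimate of the previous paragraph yields case~(1); case~(2) is identical after exchanging the roles of $\cap$ and $+$.

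I expect the main difficulty to be the reduction of the test functions $\psi$ to adapted elementary processes, for which alone the bilinear isometry is available. For a general $\psi$ in the dual couple the pairing against the adapted integrand $\phi$ is unchanged if $\psi$ is replaced by its predictable projection, so one must verify that this projection is contractive on the spaces $\cD_{s',X\s}^{p'}\pm\cD_{p',X\s}^{p'}$ and that the adapted elementary processes are dense among the adapted elements there; only then does the supremum over adapted elementary $\psi$ recover the full dual norm. The remaining ingredients---the type/cotype duality, the couple duality for $\cap$ and $+$, and the bilinear Poisson isometry---are standard, so the technical core is this adaptedness reduction, together with careful bookkeeping of the four exponent regimes.
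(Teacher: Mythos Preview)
The survey does not give a proof of this theorem; it is quoted from \cite{DMN}, where the argument runs in parallel to the martingale-type case rather than by dualizing it: one discretises the compensated Poisson integral into a martingale difference sequence and applies Burkholder--Rosenthal type lower bounds for $X$-valued martingales under a martingale cotype assumption.

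Your duality route is a legitimate alternative and the outline is sound. The ingredients you list---Pisier's equivalence between martingale cotype $s$ for $X$ and martingale type $s'$ for $X\s$ (together with super-reflexivity), the compensated-Poisson covariance identity, and the couple duality $(\cD_{s,X}^p\cap\cD_{p,X}^p)\s=\cD_{s',X\s}^{p'}+\cD_{p',X\s}^{p'}$---are all correct, and the exponent bookkeeping matches (with the boundary $p=s$ falling under part~(1) of Theorem~\ref{thm:typesBanach} for $X\s$, where the sum and intersection coincide).

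You are right that the whole weight sits on the adaptedness reduction, but this step does go through and does not need any UMD hypothesis on $X$. By Jensen, $\|\E[\psi_t|\F_{t-}]\|_{X\s}\le \E[\|\psi_t\|_{X\s}\,|\,\F_{t-}]$, so boundedness of the predictable projection on $\cD_{r,X\s}^{q}=L^{q}(\Omega;L^{r}(\R_+\times J;X\s))$ reduces to a purely \emph{scalar} statement: the diagonal map $(g_n)_n\mapsto(\E[g_n|\F_{n-1}])_n$ is bounded on $L^{q}(\Omega;\ell^{r})$ for nonnegative $(g_n)$. The endpoint $r=1$ is the dual form of Doob's inequality (pair against $f\ge 0$ and use $\E[f\,\E[g_n|\F_{n-1}]]=\E[\E[f|\F_{n-1}]\,g_n]\le\E[f^{\star}g_n]$ with $f^{\star}$ the Doob maximal function), the endpoint $r=\infty$ follows from $\E[g_n|\F_{n-1}]\le\E[\sup_m g_m|\F_{n-1}]$ together with Doob's maximal inequality, and the full range $1\le r\le\infty$, $1<q<\infty$ then follows by interpolation. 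So the projection is \emph{bounded} (with constant depending on $q$), not contractive; this is enough. Density of adapted elementary processes among predictable elements of $\cD_{r,X\s}^{q}$ is routine. Compared with the direct approach in \cite{DMN}, your argument trades the martingale Burkholder--Rosenthal machinery for a softer duality, at the price of this Doob-based projection step.
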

For Hilbert spaces $X$, Theorems \ref{thm:typesBanach} and \ref{thm:cotypesBanach}
combine to yield two-sided estimates for the $L^p$-norm
of the stochastic integral with respect to a
compensated Poisson random measure.

\begin{cor}
\label{cor:Hilbert} Let $H$ be a Hilbert space and let $\phi$ be an elementary adapted $H$-valued process.
\begin{enumerate}[\rm(1)]\item If $2\leq p<\infty$, then for all $B\in\cJ$  we have
\begin{align*}
\Big(\E \sup_{t\geq 0}\Big\|\int_{[0,t]\times B} \phi \ d\tilde{N}\Big\|^p\Big)^{1/p}
\simeq_p  \n \one_B \phi\n_{ \cD_{s,H}^p \cap \cD_{p,H}^p}.
\end{align*}
\item If $1<p<2$, then for all $B\in\cJ$ we have
\begin{align*}
\Big(\E \sup_{t\geq 0}\Big\|\int_{[0,t]\times B} \phi \ d\tilde{N}\Big\|^p\Big)^{1/p}
\simeq_p  \n \one_B \phi\n_{ \cD_{s,H}^p + \cD_{p,H}^p}.
\end{align*}
\end{enumerate}
\end{cor}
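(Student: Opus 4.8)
The plan is to specialise Theorems \ref{thm:typesBanach} and \ref{thm:cotypesBanach} to the exponent $s=2$, which is legitimate because every Hilbert space has both martingale type $2$ and martingale cotype $2$; this is the value of $s$ implicit in the statement of the corollary. Moreover the martingale type-$2$ and cotype-$2$ constants of a Hilbert space are universal, so the constants appearing in the two theorems depend only on $p$, and the resulting two-sided bound is of the claimed form $\simeq_p$. With $s=2$ fixed, the upper estimates in the corollary will come verbatim from Theorem \ref{thm:typesBanach} and the lower estimates from Theorem \ref{thm:cotypesBanach}; it then only remains to match the exponent ranges and to reconcile the supremum appearing in the corollary with the fixed-time formulation of the cotype theorem.

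For part (1), assume $2\le p<\infty$. The upper estimate $(\E\sup_{t\ge 0}\|\int_{[0,t]\times B}\phi\,d\tilde N\|^p)^{1/p}\lesssim_p \n\one_B\phi\n_{\cD_{s,H}^p\cap\cD_{p,H}^p}$ is exactly Theorem \ref{thm:typesBanach}(1), whose hypothesis $1<s\le p$ holds since $s=2\le p$. For the matching lower bound I would invoke Theorem \ref{thm:cotypesBanach}(1), valid under $s\le p$, evaluated at a time $T$ chosen beyond the (bounded) time-support of the elementary process $\phi$. Then $\one_{[0,T]\times B}\phi=\one_B\phi$, so the cotype theorem yields $\n\one_B\phi\n_{\cD_{s,H}^p\cap\cD_{p,H}^p}\lesssim_p (\E\|\int_{[0,T]\times B}\phi\,d\tilde N\|^p)^{1/p}$, and the right-hand side is trivially dominated by $(\E\sup_{t\ge 0}\|\int_{[0,t]\times B}\phi\,d\tilde N\|^p)^{1/p}$. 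Combining the two inequalities gives the equivalence.

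For part (2), assume $1<p<2$. The argument is identical in structure, now using Theorem \ref{thm:typesBanach}(2) for the upper bound and Theorem \ref{thm:cotypesBanach}(2) for the lower bound, both of which require $1<p<s$; this holds since $s=2$ and $p<2$. The sum norm $\cD_{s,H}^p+\cD_{p,H}^p$ replaces the intersection norm throughout, and the same choice of $T$ beyond the time-support of $\phi$ converts the fixed-time cotype estimate into the full-norm lower bound, again dominated by the supremum.

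The only genuine point requiring care, and the step I expect to be the main (mild) obstacle, is this passage from the fixed-$t$ statement of Theorem \ref{thm:cotypesBanach} to the supremum form demanded by the corollary. It is harmless precisely because $\phi$ is an \emph{elementary} adapted process: its time-support lies in some bounded interval $[0,T]$, so $\int_{[0,t]\times B}\phi\,d\tilde N$ is constant in $t$ for $t\ge T$ and equals the full integral. Hence evaluating the cotype bound at $t=T$ already produces the full norm on the left, and bounding the integral at $T$ by the supremum is immediate; no additional martingale maximal inequality is needed for this direction.
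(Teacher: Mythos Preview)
Your proposal is correct and follows exactly the approach indicated in the paper, which simply states that the corollary follows by combining Theorems \ref{thm:typesBanach} and \ref{thm:cotypesBanach} for Hilbert spaces (where $s=2$). Your additional care in passing from the fixed-time formulation of Theorem \ref{thm:cotypesBanach} to the supremum via the bounded time-support of elementary processes fills in the one detail the paper leaves implicit.
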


For the spaces $X = L^q(\mathcal{O})$, where $(\mathcal{O},\Sigma,\mu)$ is an arbitrary measure space,
sharp two-sided bounds for the Poisson stochastic integral can be proved. This result, due Dirksen \cite{Dirksen},
may be regarded as the Poisson analogue of Theorem \ref{thm:Itoisomorph} for $X = L^q(\mathcal{O})$.
An alternative proof has been obtained subsequently by Marinelli \cite{marinelli2013maximal}.
We write
\begin{align*}
\cS_q^p := L^p(\Om; L^q(\mathcal{O};L^2(\R_+\times J))), \\
\cD_{s,q}^p := L^p(\Om; L^s(\R_+\times J;L^q(\mathcal{O}))). \\
\end{align*}

\begin{thm}\label{thm:ItoPoisson}
 Let $1<p,q<\infty$. For any $B\in\cJ$ and for any elementary adapted $L^q(\mathcal{O})$-valued process $\phi$,
\begin{equation}
\label{eqn:summarySILqPoisson} \Big(\E \sup_{t\geq 0}
\Big\|\int_{[0,t]\times B} \phi \ d\tilde{N}\Big\|_{L^q(\mathcal{O})}^p\Big)^{1/p} \simeq_{p,q} \|\one_B \phi\|_{\cI_{p,q}},
\end{equation}
where $\cI_{p,q}$ is given by
\begin{align*}
\cS_{q}^p \cap \cD_{q,q}^p \cap \cD_{p,q}^p & \ \ \mathrm{if} \ \ 2\leq q\leq p;\\
\cS_{q}^p \cap (\cD_{q,q}^p + \cD_{p,q}^p) & \ \ \mathrm{if} \ \ 2\leq p\leq q;\\
(\cS_{q}^p \cap \cD_{q,q}^p) + \cD_{p,q}^p & \ \ \mathrm{if} \ \ p\le 2\leq q;\\
(\cS_{q}^p + \cD_{q,q}^p) \cap \cD_{p,q}^p & \ \ \mathrm{if} \ \ q\le 2\leq p;\\
\cS_{q}^p + (\cD_{q,q}^p \cap \cD_{p,q}^p) & \ \ \mathrm{if} \ \ q\leq p\leq 2;\\
\cS_{q}^p + \cD_{q,q}^p + \cD_{p,q}^p & \ \ \mathrm{if} \ \ p\leq q\leq 2.
\end{align*}
\end{thm}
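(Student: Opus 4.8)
The plan is to reduce \eqref{eqn:summarySILqPoisson} to a sharp two-sided estimate for the $L^p(\Om)$-norm of a sum of mean-zero $L^q(\mathcal{O})$-valued martingale differences, and then to identify the three norms occurring in that estimate with $\cS_q^p$, $\cD_{q,q}^p$ and $\cD_{p,q}^p$. The structural fact I would use throughout is that the stochastic integral commutes with evaluation at a point of $\mathcal{O}$: pointwise in $\mathcal{O}$ the process is a scalar compensated Poisson integral, while the outer $L^q(\mathcal{O})$-norm records the lattice structure. As a first step, since $\phi$ is an elementary adapted process, I would partition $[0,t]\times B$ into finitely many disjoint blocks $\Delta_k$ on which $\phi$ is constant in the time-space variable and predictable, so that $\int_{[0,t]\times B}\one_B\phi\,d\tilde{N}=\sum_k\xi_k$ with each $\xi_k\in L^q(\mathcal{O})$ of the form $\one_F\,\tilde{N}(\Delta_k)\,x$, $F\in\F$, $x\in L^q(\mathcal{O})$. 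By the independence and Poisson-distribution properties of $N$ on disjoint sets, $(\xi_k)$ is a mean-zero martingale difference sequence whose increments are, conditionally on the past, independent compensated Poisson increments; and the maximal function is controlled by Doob's inequality applied to the real submartingale $t\mapsto\|\int_{[0,t]\times B}\phi\,d\tilde{N}\|_{L^q(\mathcal{O})}$ (valid since $1<p<\infty$), so it suffices to estimate the terminal sum.

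The heart of the matter is a two-sided martingale Rosenthal inequality in $L^q(\mathcal{O})$: for such $(\xi_k)$ one has an equivalence between $\big(\E\|\sum_k\xi_k\|_{L^q(\mathcal{O})}^p\big)^{1/p}$ and an explicit combination of the square-function norm $\|(\sum_k|\xi_k|^2)^{1/2}\|_{L^q(\mathcal{O})}$ and two diagonal quantities built from $(\E\|\xi_k\|_{L^q(\mathcal{O})}^q)_k$ and $(\E\|\xi_k\|_{L^q(\mathcal{O})}^p)_k$. Computing the moments of the compensated increments — the second moment of $\tilde{N}(\Delta_k)$ equals $(dt\times\nu)(\Delta_k)$, the higher moments being controlled by the same quantity — turns, upon refining the partition, the square-function norm into $\cS_q^p$ and the two diagonal terms into $\cD_{q,q}^p$ and $\cD_{p,q}^p$. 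Here I would invoke the identification $\gamma^p(L^2(\R_+\times J),L^q(\mathcal{O}))\simeq L^q(\mathcal{O};L^2(\R_+\times J))$ from \eqref{eq:gammaLpX}, which explains why $\cS_q^p$ is precisely the Gaussian square-function home of the ``small'' part of the integral.

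The six cases then arise from a single set of pairwise rules governing the intersection/sum structure: $\cS_q^p$ and $\cD_{q,q}^p$ are joined by $\cap$ exactly when $q\ge 2$ (the martingale cotype-$2$ threshold of $L^q$), $\cS_q^p$ and $\cD_{p,q}^p$ by $\cap$ exactly when $p\ge 2$ (the Burkholder--Davis--Gundy threshold), and $\cD_{q,q}^p$ and $\cD_{p,q}^p$ by $\cap$ exactly when $q\le p$; one checks that these three rules combine consistently into the stated description of $\cI_{p,q}$. Two of the three building blocks can be cross-checked against results already at hand: applying Theorem \ref{thm:cotypesBanach} to $X=L^q(\mathcal{O})$ (martingale cotype $\max(q,2)$) supplies the lower bounds carried by $\cD_{q,q}^p$ and $\cD_{p,q}^p$, while Theorem \ref{thm:typesBanach} (martingale type $\min(q,2)$) supplies matching upper bounds, and Corollary \ref{cor:Hilbert} is the diagonal instance $q=2$, where $\cS_q^p=\cD_{q,q}^p$.

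The genuinely new and hardest ingredient is the $\cS_q^p$ term. For $q\ge 2$, Minkowski's inequality gives $\|\cdot\|_{\cS_q^p}\le\|\cdot\|_{\cD_{2,q}^p}$, so the square-function control provided by $\cS_q^p$ is strictly finer than the $\cD_{2,q}^p$ bound that the martingale type/cotype theorems produce; it therefore cannot be extracted from those black boxes and must come from the fine $L^q$-lattice structure in the Rosenthal step. Establishing that inequality with exactly the right $\cap$/$+$ combination in every regime — rather than up to the coarser $\cD_{2,q}^p$ level — is where the real work lies, and I expect to carry it out by interpolating, in the sense of real interpolation of the intersection and sum spaces within the Banach lattice $L^q(\mathcal{O})$, between the diagonal case $p=q$, where Fubini reduces everything to the scalar Poisson estimate applied pointwise in $\mathcal{O}$, and the martingale type/cotype endpoints described above.
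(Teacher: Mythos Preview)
The paper does not prove this theorem. It is a survey, and Theorem~\ref{thm:ItoPoisson} is simply stated with attribution to Dirksen~\cite{Dirksen} (an alternative proof by Marinelli~\cite{marinelli2013maximal} is also mentioned). There is therefore no in-paper argument to compare your proposal against.

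That said, your outline is broadly in the spirit of Dirksen's approach, which does proceed by reducing to a terminal sum and invoking a sharp Rosenthal-type inequality for $L^q(\mathcal{O})$-valued sums. Two points deserve caution. First, the appeal to \eqref{eq:gammaLpX} is out of place: that identity concerns $\gamma$-radonifying (Gaussian) square functions, whereas $\cS_q^p$ is the pointwise $\ell^2$-square function inside $L^q(\mathcal{O})$; the identification you need is plain Fubini and no Gaussian structure enters the Poisson story. Second, the claim that the higher moments of $\tilde N(\Delta_k)$ are ``controlled by the same quantity'' is only correct in the regime where the intensity $(dt\times\nu)(\Delta_k)$ is small; the passage from the discrete Rosenthal bound to the integral norms $\cS_q^p,\cD_{q,q}^p,\cD_{p,q}^p$ therefore requires a genuine limiting argument along refining partitions, not just a moment computation.

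The substantive gap is the Rosenthal inequality itself. You correctly isolate the $\cS_q^p$ term as the one that cannot be read off from Theorems~\ref{thm:typesBanach} and~\ref{thm:cotypesBanach}, and you correctly locate the difficulty in obtaining the exact $\cap/+$ lattice in all six regimes. But the route you propose --- real interpolation between the diagonal $p=q$ case and the martingale type/cotype endpoints --- is not known to produce that structure, and I do not see how to make it do so: interpolation between an intersection and a sum space does not in general return the intermediate intersection/sum combinations you list. Dirksen's argument instead proves the two-sided Rosenthal inequality directly, using decoupling, duality between the six regimes (note that $(\cI_{p,q})^*$ matches $\cI_{p',q'}$ under the stated rules), and lattice-specific square-function estimates in $L^q$. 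Your proposal identifies the right target but does not yet supply a mechanism for hitting it.
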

It is also shown that the estimate $\lesssim_{p,q}$ in \eqref{eqn:summarySILqPoisson} remains valid if $q=1$.
A non-commutative version of Theorem \ref{thm:ItoPoisson} in a more general abstract setting can be found in
\cite[Section 7]{Dirksen}.

In contrast to the Gaussian case, where one expression for the norm suffices for all $1<p,q<\infty$,
in the Poisson case
$6$ different expressions are obtained depending on the mutual positions of the numbers $p$, $q$, and $2$.
This also suggests that the problem of determining sharp two-sided bounds for elementary adapted processes with
values in a general UMD space $X$ seems to be a very challenging one.

Noting that $X = L^q(\mathcal{O})$ has martingale type $q\wedge 2$ and martingale cotype $q\vee 2$, Theorems
\ref{thm:typesBanach} and \ref{thm:cotypesBanach} are applicable as well; for $q\not=2$ the bound obtained from these
theorems are weaker that the ones obtained from Theorem \ref{thm:ItoPoisson}.

\subsection*{Acknowledgment} We thank Markus Antoni for carefully reading an earlier draft of this paper.

\def\polhk#1{\setbox0=\hbox{#1}{\ooalign{\hidewidth
  \lower1.5ex\hbox{`}\hidewidth\crcr\unhbox0}}}
  \def\polhk#1{\setbox0=\hbox{#1}{\ooalign{\hidewidth
  \lower1.5ex\hbox{`}\hidewidth\crcr\unhbox0}}} \def\cprime{$'$}


\begin{thebibliography}{100}

\bibitem{AreBu02}
W.~Arendt and S.~Bu.
\newblock The operator-valued {M}arcinkiewicz multiplier theorem and maximal
  regularity.
\newblock {\em Math. Z.}, 240(2):311--343, 2002.

\bibitem{BelDal}
Ya.I. Belopol{\cprime}skaya and Yu.L. Daletski{\u\i}.
\newblock It\^o equations and differential geometry.
\newblock {\em Uspekhi Mat. Nauk}, 37(3(225)):95--142, 224, 1982.

\bibitem{BD}
Ya.I. Belopol{\cprime}skaya and Yu.L. Daletski{\u\i}.
\newblock {\em Stochastic equations and differential geometry}, volume~30 of
  {\em Mathematics and its Applications (Soviet Series)}.
\newblock Kluwer Academic Publishers Group, Dordrecht, 1990.

\bibitem{Bog}
V.I. Bogachev.
\newblock {\em Gaussian measures}, volume~62 of {\em Mathematical Surveys and
  Monographs}.
\newblock American Mathematical Society, Providence, RI, 1998.

\bibitem{Bou83}
J.~Bourgain.
\newblock Some remarks on {B}anach spaces in which martingale difference
  sequences are unconditional.
\newblock {\em Ark. Mat.}, 21(2):163--168, 1983.

\bibitem{Bour86}
J.~Bourgain.
\newblock Vector-valued singular integrals and the {$H\sp 1$}-{BMO} duality.
\newblock In {\em Probability theory and harmonic analysis (Cleveland, Ohio,
  1983)}, volume~98 of {\em Monogr. Textbooks Pure Appl. Math.}, pages 1--19.
  Dekker, New York, 1986.

\bibitem{BroDin}
J.K. Brooks and N~Dinculeanu.
\newblock {\em Stochastic integration in {B}anach spaces}, volume~81 of {\em
  Advances in Mathematics}.
\newblock Academic Press, 1990.

\bibitem{Brz1}
Z.~Brze{\'z}niak.
\newblock Stochastic partial differential equations in {M}-type {$2$} {B}anach
  spaces.
\newblock {\em Potential Anal.}, 4(1):1--45, 1995.

\bibitem{Brz2}
Z.~Brze{\'z}niak.
\newblock On stochastic convolution in {B}anach spaces and applications.
\newblock {\em Stochastics Stochastics Rep.}, 61(3-4):245--295, 1997.

\bibitem{Brz3}
Z.~Brze{\'z}niak.
\newblock Some remarks on {I}t\^o and {S}tratonovich integration in 2-smooth
  {B}anach spaces.
\newblock In {\em Probabilistic methods in fluids}, pages 48--69. World Sci.
  Publishing, River Edge, NJ, 2003.

\bibitem{BrzMil}
Z.~Brze\'zniak and A.~Millet.
\newblock On the stochastic {S}trichartz estimates and the stochastic nonlinear
  {S}chr\"odinger equation on a compact {R}iemannian manifold.
\newblock {\em Potential Analysis}, 2013.
\newblock Online First.

\bibitem{BNVW}
Z.~Brze{\'z}niak, J.M.A.M.~van Neerven, M.C. Veraar, and L.W. Weis.
\newblock It\^o's formula in {UMD} {B}anach spaces and regularity of solutions
  of the {Z}akai equation.
\newblock {\em J. Differential Equations}, 245(1):30--58, 2008.

\bibitem{BrzVer11}
Z.~Brze{\'z}niak and M.C. Veraar.
\newblock Is the stochastic parabolicity condition dependent on {$p$} and
  {$q$}?
\newblock {\em Electron. J. Probab.}, 17(56):1--24, 2012.

\bibitem{Bu1}
D.L. Burkholder.
\newblock A geometrical characterization of {B}anach spaces in which martingale
  difference sequences are unconditional.
\newblock {\em Ann. Probab.}, 9(6):997--1011, 1981.

\bibitem{Bu2}
D.L. Burkholder.
\newblock Martingales and {F}ourier analysis in {B}anach spaces.
\newblock In {\em Probability and analysis (Varenna, 1985)}, volume 1206 of
  {\em Lecture Notes in Math.}, pages 61--108. Springer, Berlin, 1986.

\bibitem{Burk01}
D.L. Burkholder.
\newblock Martingales and singular integrals in {B}anach spaces.
\newblock In {\em Handbook of the geometry of {B}anach spaces, {V}ol. {I}},
  pages 233--269. North-Holland, Amsterdam, 2001.

\bibitem{CM}
A.~Chojnowska-Michalik.
\newblock Stochastic differential equations in {H}ilbert spaces.
\newblock In {\em Probability theory (Papers, VIIth Semester, Stefan Banach
  Internat. Math. Center, Warsaw, 1976)}, volume~5 of {\em Banach Center
  Publ.}, pages 53--74. PWN, Warsaw, 1979.

\bibitem{CKLL}
P.A. Cioica, K.-H. Kim, K.~Lee, and F.~Lindner.
\newblock On the $l^q(l^p)$-regularity and {B}esov smoothness of stochastic
  parabolic equations on bounded {L}ipschitz domains.
\newblock {\em Electron. J. Probab}, 18(82):1--41, 2013.

\bibitem{CPSW}
Ph. Cl{\'e}ment, B.~de Pagter, F.A. Sukochev, and H.~Witvliet.
\newblock Schauder decompositions and multiplier theorems.
\newblock {\em Studia Math.}, 138(2):135--163, 2000.

\bibitem{CoxGei}
S.G. Cox and S.~Geiss.
\newblock On relations between decoupling inequalities in {B}anach spaces.
\newblock in preparation.

\bibitem{CoxGor}
S.G. Cox and M.~G{\'o}rajski.
\newblock Vector-valued stochastic delay equations -- a semigroup approach.
\newblock {\em Semigroup Forum}, 82(3):389--411, 2011.

\bibitem{CoxNee12}
S.G. Cox and J.M.A.M. van Neerven.
\newblock Pathwise {H}\"older convergence of the implicit {E}uler scheme for
  semi-linear {SPDE}s with multiplicative noise.
\newblock {\em Numer. Math.}, 125(2):259--345, 2013.

\bibitem{CoxVer2}
S.G. Cox and M.C. Veraar.
\newblock {Vector-valued decoupling and the {B}urkholder-{D}avis-{G}undy
  inequality.}
\newblock {\em Illinois J. Math.}, 55(1):343--375, 2011.

\bibitem{Cre}
P.~Crewe.
\newblock Infinitely delayed stochastic evolution equations in {UMD} {B}anach
  spaces.
\newblock {\em J. Math. Anal. Appl.}, 415(1):325--345, 2014.

\bibitem{Cur}
R.F. Curtain.
\newblock {\em Stochastic differential equations in a {H}ilbert space}.
\newblock PhD thesis, 1969.
\newblock Thesis (Ph.D.)--Brown University.

\bibitem{DPZ}
G.~Da~Prato and J.~Zabczyk.
\newblock {\em Stochastic equations in infinite dimensions}, volume~44 of {\em
  Encyclopedia of Mathematics and its Applications}.
\newblock Cambridge University Press, Cambridge, 1992.

\bibitem{DPZ2}
G.~Da~Prato and J.~Zabczyk.
\newblock {\em Ergodicity for Infinite-Dimensional Systems}, volume 229 of {\em
  LMS Lecture Note Series}.
\newblock London Mathematical Society, 1996.

\bibitem{Dal}
Yu.L. Daletski{\u\i}.
\newblock Stochastic differential geometry.
\newblock {\em Akad. Nauk Ukrain. SSR Inst. Mat. Preprint}, (46):48, 1982.

\bibitem{DHP}
R.~Denk, M.~Hieber, and J.~Pr{\"u}ss.
\newblock {$R$}-boundedness, {F}ourier multipliers and problems of elliptic and
  parabolic type.
\newblock {\em Mem. Amer. Math. Soc.}, 166(788), 2003.

\bibitem{DesLon}
G.~Desch and S.-O. Londen.
\newblock Maximal regularity for stochastic integral equations.
\newblock {\em Journal of Applied Analysis}, 19(1):125--140, 2013.

\bibitem{Dett91}
E.~Dettweiler.
\newblock Stochastic integration relative to {B}rownian motion on a general
  {B}anach space.
\newblock {\em Doga Mat}, 15(2):58--97, 1991.

\bibitem{GirRus2}
C.~Di~Girolami, G.~Fabbri, and F.~Russo.
\newblock The covariation for {B}anach space valued processes and applications.
\newblock {\em Metrica (Online First)}.
\newblock arXiv:1301.5715.

\bibitem{GirRus}
C.~Di~Girolami and F.~Russo.
\newblock Generalized covariation for {B}anach valued processes and {I}t{\^o}
  formula.
\newblock {\em Osaka J. Math.}
\newblock To appear, HAL-INRIA Preprint, 2010.

\bibitem{DJT}
J.~Diestel, H.~Jarchow, and A.~Tonge.
\newblock {\em Absolutely summing operators}, volume~43 of {\em Cambridge
  Studies in Advanced Mathematics}.
\newblock Cambridge University Press, Cambridge, 1995.

\bibitem{Dirksen}
S.~Dirksen.
\newblock {It\^o isomorphisms for {$L^p$}-valued {P}oisson stochastic
  integrals}.
\newblock {\em Ann. Probab.}
\newblock To appear, arXiv:1208.3885.

\bibitem{DMN}
S.~Dirksen, J.~Maas, and J.M.A.M.~van Neerven.
\newblock Poisson stochastic integration in {B}anach spaces.
\newblock {\em Electron. J. Probab.}, 18(100):1--28, 2013.

\bibitem{Gar}
D.J.H. Garling.
\newblock Brownian motion and {UMD}-spaces.
\newblock In {\em Probability and Banach spaces (Zaragoza, 1985)}, volume 1221
  of {\em Lecture Notes in Math.}, pages 36--49. Springer, Berlin, 1986.

\bibitem{Ga2}
D.J.H. Garling.
\newblock Random martingale transform inequalities.
\newblock In {\em Probability in Banach spaces 6 (Sandbjerg, 1986)}, volume~20
  of {\em Progr. Probab.}, pages 101--119. Birkh\"auser Boston, Boston, MA,
  1990.

\bibitem{Ge97}
S.~Geiss.
\newblock {${\rm BMO}\sb \psi$}-spaces and applications to extrapolation
  theory.
\newblock {\em Studia Math.}, 122(3):235--274, 1997.

\bibitem{GirWe03}
M.~Girardi and L.~Weis.
\newblock Operator-valued {F}ourier multiplier theorems on {$L_p(X)$} and
  geometry of {B}anach spaces.
\newblock {\em J. Funct. Anal.}, 204(2):320--354, 2003.

\bibitem{Haase:2}
M.H.A. Haase.
\newblock {\em The functional calculus for sectorial operators}, volume 169 of
  {\em Operator Theory: Advances and Applications}.
\newblock Birkh\"auser Verlag, Basel, 2006.

\bibitem{HHN02}
R.~Haller, H.~Heck, and A.~Noll.
\newblock Mikhlin's theorem for operator-valued {F}ourier multipliers in {$n$}
  variables.
\newblock {\em Math. Nachr.}, 244:110--130, 2002.

\bibitem{Hau11}
E.~Hausenblas.
\newblock Maximal inequalities of the {I}t\^o integral with respect to
  {P}oisson random measures or {L}\'evy processes on {B}anach spaces.
\newblock {\em Potential Anal.}, 35(3):223--251, 2011.

\bibitem{HJP}
J.~Hoffmann-J{\o}rgensen and G.~Pisier.
\newblock The law of large numbers and the central limit theorem in {B}anach
  spaces.
\newblock {\em Ann. Probability}, 4(4):587--599, 1976.

\bibitem{Hyt06}
T.P. Hyt{\"o}nen.
\newblock An operator-valued {$Tb$} theorem.
\newblock {\em J. Funct. Anal.}, 234(2):420--463, 2006.

\bibitem{KNVW}
N.~Kalton, J.M.A.M.~van Neerven, M.C. Veraar, and L.W. Weis.
\newblock Embedding vector-valued {B}esov spaces into spaces of
  {$\gamma$}-radonifying operators.
\newblock {\em Math. Nachr.}, 281(2):238--252, 2008.

\bibitem{KWcalc}
N.J. Kalton and L.W. Weis.
\newblock The {$H^\infty$}-calculus and sums of closed operators.
\newblock {\em Math. Ann.}, 321(2):319--345, 2001.

\bibitem{KaWe}
N.J. Kalton and L.W. Weis.
\newblock The {$H^\infty$}-calculus and square function estimates.
\newblock Preprint, 2004.

\bibitem{Kim}
K.-H. Kim.
\newblock Sobolev space theory of {SPDE}s with continuous or measurable leading
  coefficients.
\newblock {\em Stochastic Process. Appl.}, 119(1):16--44, 2009.

\bibitem{Kry94}
N.V. Krylov.
\newblock A generalization of the {L}ittlewood-{P}aley inequality and some
  other results related to stochastic partial differential equations.
\newblock {\em Ulam Quart}, 2(4):16, 1994.

\bibitem{Kry}
N.V. Krylov.
\newblock An analytic approach to {SPDE}s.
\newblock In {\em Stochastic partial differential equations: six perspectives},
  volume~64 of {\em Math. Surveys Monogr.}, pages 185--242. Amer. Math. Soc.,
  Providence, RI, 1999.

\bibitem{Kry00}
N.V. Krylov.
\newblock S{PDE}s in {$L\sb q((0,\tau]\!],L\sb p)$} spaces.
\newblock {\em Electron. J. Probab.}, 5(13):1--29, 2000.

\bibitem{Kry06}
N.V. Krylov.
\newblock On the foundation of the {$L\sb p$}-theory of stochastic partial
  differential equations.
\newblock In {\em Stochastic partial differential equations and
  applications---{VII}}, volume 245 of {\em Lect. Notes Pure Appl. Math.},
  pages 179--191. Chapman \& Hall/CRC, Boca Raton, FL, 2006.

\bibitem{Ku}
H.~Kunita.
\newblock Stochastic integrals based on martingales taking values in {H}ilbert
  space.
\newblock {\em Nagoya Math. J.}, 38:41--52, 1970.

\bibitem{KuWe}
P.C. Kunstmann and L.W. Weis.
\newblock Maximal {$L\sb p$}-regularity for parabolic equations, {F}ourier
  multiplier theorems and {$H\sp \infty$}-functional calculus.
\newblock In {\em Functional analytic methods for evolution equations}, volume
  1855 of {\em Lecture Notes in Math.}, pages 65--311. Springer, Berlin, 2004.

\bibitem{Kunze12}
M.C. Kunze.
\newblock Stochastic reaction-diffusion systems with {H}\"older continuous
  multiplicative noise.
\newblock arXiv:1209.4821, 2012.

\bibitem{KunzeMart}
M.C. Kunze.
\newblock On a class of martingale problems on {B}anach spaces.
\newblock {\em Electron. J. Probab.}, 18(104):1--30, 2013.

\bibitem{KuNe12}
M.C. Kunze and J.M.A.M. van Neerven.
\newblock Continuous dependence on the coefficients and global existence for
  stochastic reaction diffusion equations.
\newblock {\em J. Differential Equations}, 253(3):1036--1068, 2012.

\bibitem{KwWo89}
S.~Kwapie{\'n} and W.A. Woyczy{\'n}ski.
\newblock Tangent sequences of random variables: basic inequalities and their
  applications.
\newblock In {\em Almost everywhere convergence (Columbus, OH, 1988)}, pages
  237--265. Academic Press, Boston, MA, 1989.

\bibitem{LatOle99}
R.~Lata{\l}a and K.~Oleszkiewicz.
\newblock Gaussian measures of dilatations of convex symmetric sets.
\newblock {\em Ann. Probab.}, 27(4):1922--1938, 1999.

\bibitem{LeTa}
M.~Ledoux and M.~Talagrand.
\newblock {\em Probability in {B}anach spaces: isoperimetry and processes},
  volume~23 of {\em Ergebnisse der Mathematik und ihrer Grenzgebiete}.
\newblock Springer-Verlag, Berlin, 1991.

\bibitem{Lenglart}
E.~Lenglart.
\newblock Relation de domination entre deux processus.
\newblock {\em Ann. Inst. H. Poincar\'e Sect. B (N.S.)}, 13(2):171--179, 1977.

\bibitem{Maa10}
J.~Maas.
\newblock Malliavin calculus and decoupling inequalities in {B}anach spaces.
\newblock {\em J. Math. Anal. Appl.}, 363(2):383--398, 2010.

\bibitem{MN08}
J.~Maas and J.M.A.M.~van Neerven.
\newblock A {C}lark-{O}cone formula in {UMD} {B}anach spaces.
\newblock {\em Electron. Commun. Probab.}, 13:151--164, 2008.

\bibitem{marinelli2013maximal}
C.~Marinelli.
\newblock On maximal inequalities for purely discontinuous {$ L_q $}-valued
  martingales.
\newblock {\em arXiv:1311.7120}, 2013.

\bibitem{MPR}
C.~Marinelli, C.~Pr{\'e}v{\^o}t, and M.~R{\"o}ckner.
\newblock Regular dependence on initial data for stochastic evolution equations
  with multiplicative {P}oisson noise.
\newblock {\em Journal of Functional Analysis}, 258(2):616--649, 2010.

\bibitem{MR}
C.~Marinelli and M.~R{\"o}ckner.
\newblock Well-posedness and asymptotic behavior for stochastic
  reaction-diffusion equations with multiplicative {P}oisson noise.
\newblock {\em Electron. J. Probab}, 15:1528--1555, 2010.

\bibitem{Mau}
B.~Maurey.
\newblock Syst\`eme de {H}aar.
\newblock In {\em S\'eminaire Maurey-Schwartz 1974--1975: Espaces $L^{p}$,
  applications radonifiantes et g\'eom\'etrie des espaces de Banach, Exp. Nos.
  I et II}, page 26 pp. Centre Math., \'Ecole Polytech., Paris, 1975.

\bibitem{McCon84}
T.R. McConnell.
\newblock On {F}ourier multiplier transformations of {B}anach-valued functions.
\newblock {\em Trans. Amer. Math. Soc.}, 285(2):739--757, 1984.

\bibitem{McCon89}
T.R. McConnell.
\newblock Decoupling and stochastic integration in {UMD} {B}anach spaces.
\newblock {\em Probab. Math. Statist.}, 10(2):283--295, 1989.

\bibitem{MC}
T.R. McConnell.
\newblock Decoupling and stochastic integration in {UMD} {B}anach spaces.
\newblock {\em Probab. Math. Statist.}, 10(2):283--295, 1989.

\bibitem{Met1}
M.~M{\'e}tivier.
\newblock {\em Reelle und vektorwertige {Q}uasimartingale und die {T}heorie der
  stochastischen {I}ntegration}.
\newblock Lecture Notes in Mathematics, Vol. 607. Springer-Verlag, Berlin,
  1977.

\bibitem{Met2}
M.~M{\'e}tivier.
\newblock {\em Stochastic partial differential equations in
  infinite-dimensional spaces}.
\newblock Scuola Normale Superiore di Pisa, Quaderni. Scuola Normale Superiore,
  Pisa, 1988.
\newblock With a preface by G. Da Prato.

\bibitem{MetPel}
M.~M{\'e}tivier and J.~Pellaumail.
\newblock {\em Stochastic integration}.
\newblock Academic Press, New York, 1980.
\newblock Probability and Mathematical Statistics.

\bibitem{NeeCMA}
J.M.A.M.~van Neerven.
\newblock $\gamma$-{R}adonifying operators--a survey.
\newblock In {\em Spectral Theory and Harmonic Analysis (Canberra, 2009)},
  volume~44 of {\em Proc. Centre Math. Anal. Austral. Nat. Univ.}, pages 1--62.
  Austral. Nat. Univ., Canberra, 2010.

\bibitem{NVWco}
J.M.A.M.~van Neerven, M.C. Veraar, and L.W. Weis.
\newblock Conditions for stochastic integrability in {UMD} {B}anach spaces.
\newblock In {\em Banach spaces and their applications in analysis (in honor of
  Nigel Kalton's 60th birthday)}, pages 127--146. De Gruyter Proceedings in
  Mathematics, De Gruyter, 2007.

\bibitem{NVW1}
J.M.A.M.~van Neerven, M.C. Veraar, and L.W. Weis.
\newblock Stochastic integration in {UMD} {B}anach spaces.
\newblock {\em Ann. Probab.}, 35(4):1438--1478, 2007.

\bibitem{NVW3}
J.M.A.M.~van Neerven, M.C. Veraar, and L.W. Weis.
\newblock Stochastic evolution equations in {UMD} {B}anach spaces.
\newblock {\em J. Funct. Anal.}, 255(4):940--993, 2008.

\bibitem{NVW-gamma}
J.M.A.M.~van Neerven, M.C. Veraar, and L.W. Weis.
\newblock Maximal gamma-regularity.
\newblock {\em arXiv preprint arXiv:1209.3782}, 2012.

\bibitem{NVW12eq}
J.M.A.M.~van Neerven, M.C. Veraar, and L.W. Weis.
\newblock {Maximal $L^p$-regularity for stochastic evolution equations.}
\newblock {\em SIAM J. Math. Anal.}, 44(3):1372--1414, 2012.

\bibitem{NVW10}
J.M.A.M.~van Neerven, M.C. Veraar, and L.W. Weis.
\newblock Stochastic maximal {$L^p$}-regularity.
\newblock {\em Ann. Probab.}, 40(2):788--812, 2012.

\bibitem{NVW10b}
J.M.A.M.~van Neerven, M.C. Veraar, and L.W. Weis.
\newblock {On the $R$-boundedness of stochastic convolution operators}.
\newblock Submitted, arXiv:1404.3353, 2014.

\bibitem{NW1}
J.M.A.M.~van Neerven and L.W. Weis.
\newblock Stochastic integration of functions with values in a {B}anach space.
\newblock {\em Studia Math.}, 166(2):131--170, 2005.

\bibitem{vNWe}
J.M.A.M.~van Neerven and L.W. Weis.
\newblock Weak limits and integrals of {G}aussian covariances in {B}anach
  spaces.
\newblock {\em Probab. Math. Statist.}, 25(1, Acta Univ. Wratislav. No.
  2784):55--74, 2005.

\bibitem{Nu}
D.~Nualart.
\newblock {\em Malliavin calculus and its applications}, volume 110 of {\em
  CBMS Regional Conference Series in Mathematics}.
\newblock Published for the Conference Board of the Mathematical Sciences,
  Washington, DC, 2009.

\bibitem{Ondrejat04}
M.~Ondrej{\'a}t.
\newblock Uniqueness for stochastic evolution equations in {B}anach spaces.
\newblock {\em Dissertationes Math. (Rozprawy Mat.)}, 426:63, 2004.

\bibitem{OndSei}
M.~Ondrej{\'a}t and J.~Seidler.
\newblock On existence of progressively measurable modifications.
\newblock {\em Electron. Commun. Probab.}, 18:1--6, 2013.

\bibitem{OndrVer2013}
M.~Ondrej\'at and M.C. Veraar.
\newblock Weak characterizations of stochastic integrability and {D}udley's
  theorem in infinite dimensions.
\newblock {\em J. Theoretical Probab.}, 2013.
\newblock Online First.

\bibitem{Pin}
I.~Pinelis.
\newblock Optimum bounds for the distributions of martingales in {B}anach
  spaces.
\newblock {\em The Annals of Probability}, 22(4):1679--1706, 1994.
\newblock Correction: {\em ibid.} 27:2119, 1999.

\bibitem{Pi75}
G.~Pisier.
\newblock Martingales with values in uniformly convex spaces.
\newblock {\em Israel J. Math.}, 20(3-4):326--350, 1975.

\bibitem{Pis88}
G.~Pisier.
\newblock Riesz transforms: a simpler analytic proof of {P.A.} {M}eyer's
  inequality.
\newblock {\em S{\'e}minaire de probabilit{\'e}s XXII}, pages 485--501, 1988.

\bibitem{PisConv}
G.~Pisier.
\newblock {\em The volume of convex bodies and {B}anach space geometry},
  volume~94 of {\em Cambridge Tracts in Mathematics}.
\newblock Cambridge University Press, Cambridge, 1989.

\bibitem{ProVer}
M.~Pronk and M.C Veraar.
\newblock Tools for {M}alliavin calculus in {UMD} {B}anach spaces.
\newblock {\em Potential Analysis}, 40:307--344, 2014.

\bibitem{RY}
D.~Revuz and M.~Yor.
\newblock {\em Continuous martingales and {B}rownian motion}, volume 293 of
  {\em Grundlehren der Mathematischen Wissenschaften}.
\newblock Springer-Verlag, Berlin, 1991.

\bibitem{RS}
J.~Rosi{\'n}ski and Z.~Suchanecki.
\newblock On the space of vector-valued functions integrable with respect to
  the white noise.
\newblock {\em Colloq. Math.}, 43(1):183--201 (1981), 1980.

\bibitem{SchnVer10}
R.~Schnaubelt and M.C. Veraar.
\newblock Structurally damped plate and wave equations with random point force
  in arbitrary space dimensions.
\newblock {\em Differential Integral Equations}, 23(9-10):957--988, 2010.

\bibitem{Seidler}
J.~Seidler.
\newblock Exponential estimates for stochastic convolutions in 2-smooth
  {B}anach spaces.
\newblock {\em Electron. J. Probab.}, 15:no. 50, 1556--1573, 2010.

\bibitem{StrWeis08}
{\v{Z}}.~{\v{S}}trkalj and L.~W. Weis.
\newblock On operator-valued {F}ourier multiplier theorems.
\newblock {\em Trans. Amer. Math. Soc.}, 359(8):3529--3547 (electronic), 2007.

\bibitem{Yo}
M.~Yor.
\newblock Sur les int\'egrales stochastiques \`a valeurs dans un espace de
  {B}anach.
\newblock {\em Ann. Inst. H. Poincar\'e Sect. B (N.S.)}, 10:31--36, 1974.

\bibitem{Zhu}
J.~Zhu.
\newblock Maximal inequalities for stochastic convolutions driven by {L}\'evy
  processes in {B}anach spaces.
\newblock Work in progress.

\bibitem{Zim89}
F.~Zimmermann.
\newblock On vector-valued {F}ourier multiplier theorems.
\newblock {\em Studia Math.}, 93(3):201--222, 1989.

\end{thebibliography}
\end{document}